\definecolor{vert}{HTML}{49A449}
\newtheorem{hypo}{Hypothesis}
\newtheorem{theorem}{Theorem}[section]
\newtheorem{proposition}[theorem]{Proposition}
\newtheorem{corollary}[theorem]{Corollary}
\newtheorem{lemma}[theorem]{Lemma}
\numberwithin{equation}{section}
\newcommand {\rd}{\mathrm{d}}
\newcommand{\eps}{\varepsilon}
\newcommand{\T}{{\mathbb{T}}}
\newcommand{\R}{{\mathbb{R}}}
\newcommand{\N}{{\mathbb{N}}}
\newcommand{\Z}{{\mathbb{Z}}}
\renewcommand{\P}{{\mathbb{P}}}
\newcommand{\E}{{\mathbb{E}}}
\begin{document}
\title{Stochastic acceleration in a random time-dependent potential}
\author{E. Soret\thanks{Emilie.Soret@math.univ-lille1.fr} \\and\\ S. {\ De Bi\`{e}vre}\thanks{Stephan.De-Bievre@math.univ-lille1.fr}  \\
Laboratoire Paul Painlev\'e, CNRS, UMR 8524 et UFR de Math\'ematiques \\
Universit\'e Lille 1, Sciences et Technologies\\
F-59655 Villeneuve d'Ascq Cedex, France.\\
Equipe-Projet MEPHYSTO \\
Centre de Recherche INRIA Futurs \\
Parc Scientifique de la Haute Borne, 40, avenue Halley B.P. 70478 \\
F-59658 Villeneuve d'Ascq cedex, France.
}
\date{\today}
\maketitle


\begin{abstract}
We study the long time behaviour of the speed of a particle moving in $\R^d$ under the influence of a random time-dependent potential representing the particle's environment. The particle undergoes successive scattering events that we model with a Markov chain for which each step represents a collision. Assuming the initial velocity is large enough, we show that, with high probability, the particle's kinetic energy $E(t)$ grows as $t^{\frac25}$ when $d>5$.
\end{abstract}


\section{Introduction}\label{s:introduction}
Our goal in this paper is to make progress on the rigorous analysis of the stochastic acceleration of a classical particle moving through a random time-dependent potential. The full problem can be described as follows. A particle moves in $\R^d$, and its position $q(t)$ obeys the following law of motion:
\begin{equation}\label{eq:newton}
\ddot q(t)= -\sum_{i} \lambda_i\nabla V(q(t)-r_i,\omega t+\phi_i),\quad q(0)=q_0,\quad \dot{q}(0)=v_0. 
\end{equation}
Here $V\in \mathcal{C}^\infty\left( \R^d,\T^m\right)$ is a real valued potential which is bounded and of compact support in its first variable in the ball of radius $\frac12$ centered at the origin.

 The frequency vector $\omega\in\R^m$ is fixed, so that the particle moves under the influence of a potential $V(q(t)-r_i,\omega t+\phi_i)$ that is quasi-periodic in time, when it is close to the scattering center $r_i$. The scattering centers $r_i\in\R^d$ are a countable and locally finite family of (random or deterministic) points that satisfies a ``finite horizon'' condition, that we shall not explicitly describe. The phases $\phi_i$ and the coupling constants $\lambda_i$ are i.i.d random variables in $\T^m$ respectively $\R$.

Such a particle undergoes successive scattering events (also refered to as collisions) when crossing one of the balls of radius $\frac12$ centered on the $r_i$, and executes a uniform straight line motion otherwise. When the potential $V$ is time-independent, the particle's kinetic energy is preserved in the scattering events and is therefore uniformly bounded in time. We are interested in the case when $V$ does depend on time, in which case the kinetic energy is expected to grow in time. This is the phenomenon known as ``stochastic acceleration''. It has been extensively studied by various authors in a variety of models (see for example \cite{GR09}, \cite{Sturrock66} and \cite{Vanden94}) and has been the subject of some controversy concerning the precise rate of growth. We refer to \cite{adblp} for further background.

In~\cite{adblp} and~\cite{aguer2010}, the above model was analysed numerically and partial arguments were given to argue that, asymptotically in time ($d\geq 2$),
$$
\E(\|\dot q(t)\|)\sim t^{1/5},\quad \E(\|q(t)\|)\sim t,
$$
where the expected value is with respect to the $(\lambda_i,\phi_i)$ and to an initial distribution of particle velocities.

In this paper, we shall consider a simplified model for the particle's motion, in which its possible recollisions with the same scatterer are ignored. Within that framework, we give a complete and rigorous analysis of the asymptotic behaviour of $\|\dot q(t)\|$ corroborating the $t^{1/5}$ law above for $d>5$ (Theorem~\ref{thm:finalonq}).

 The model is described in detail in Section~\ref{s:model}. It treats the successive scattering events as independent, leading to a Markov chain description for the particle's momentum and position at each scattering event. We therefore establish that the $t^{1/5}$ law is indeed obtained from successive random scattering events with a smooth potential. The numerics in~\cite{adblp} suggests this behaviour is not altered by possible recollisions but we do not prove this here.
 
Our work relies first of all on the analysis of the single scattering events for a high energy particle that was given in~\cite{adblp} and~\cite{aguer2010}. This yields a sufficiently sharp description of the transition probabilities of the Markov chain at high momenta to allow us to control the asymptotic behaviour of the energy of the particle in this Markov chain dynamics. For that purpose we then adapt techniques developed in~\cite{DK2009} in the context of a related problem on which we shall comment below.

The paper is organised as follow. In Section~\ref{s:model} we introduce the model that we consider and we describe the behaviour of the kinetic energy by a Markov chain where each step corresponds to a passage trough a scattering region. In Section~\ref{s:mainresult}, we state a technical result (Theorem~\ref{thm:finalresult}) for a class of Markov chains which includes the one described in Section~\ref{s:model} and we show how it implies our main result, Theorem~\ref{thm:finalonq}. In Section~\ref{s:scaling}, we show that correctly rescaled and under some technical conditions, each Markov chain of this class converges weakly to a transient Bessel process (see Theorem~\ref{thm:scaling}). This Averaging Theorem is a key element of the proof of Theorem~\ref{thm:finalresult}. Sections~\ref{s:auxprocess},~\ref{s:exittimes} and~\ref{s:finalproof} contain the three steps of the proof of Theorem~\ref{thm:finalresult}. An appendix concludes this paper with in particular the proof of Theorem~\ref{thm:scaling}.

\noindent{\bf Acknowledgments.} The authors thank B.~Aguer, M.~Rousset, T.~Simon and D.~Dereudre for helpful discussions. This work is supported in part by the Labex CEMPI (ANR-11-LABX-0007-01).


\section{The Markov chain model}\label{s:model}
The solution $(q(t),\dot{q}(t))$ of \eqref{eq:newton} can be viewed as a stochastic process on the probability space generated by the $(\lambda_i, r_i, \phi_i)$. To each trajectory $(q(t),\dot{q}(t))$ one can associate a sequence $(t_n, v_n, b_n, r_{i_n}, \lambda_{i_n}, \phi_{i_n})_{i\in \N}$. Here $t_n$ is the instant the particle arrives at the $n$-th scattering region with incoming velocity $v_n=\dot{q}(t_n)$; $r_{i_n}$ is the $n$-th scattering center visited by the particle, $\lambda_{i_n}$ and $\phi_{i_n}$ are, respectively the associated coupling constant and phase; $b_n$ is the impact parameter (Figure~\ref{fi:scatteringevent}). More precisely, we have
$$e_n=\dfrac{v_n}{||v_n||},\quad q(t_n)=r_{i_n}-\frac{1}{2}e_n+b_n,\quad b_n\cdot v_n=0, \quad ||b_n||\leq \frac{1}{2}. $$

\begin{figure}
\centering
\includegraphics[scale=0.5]{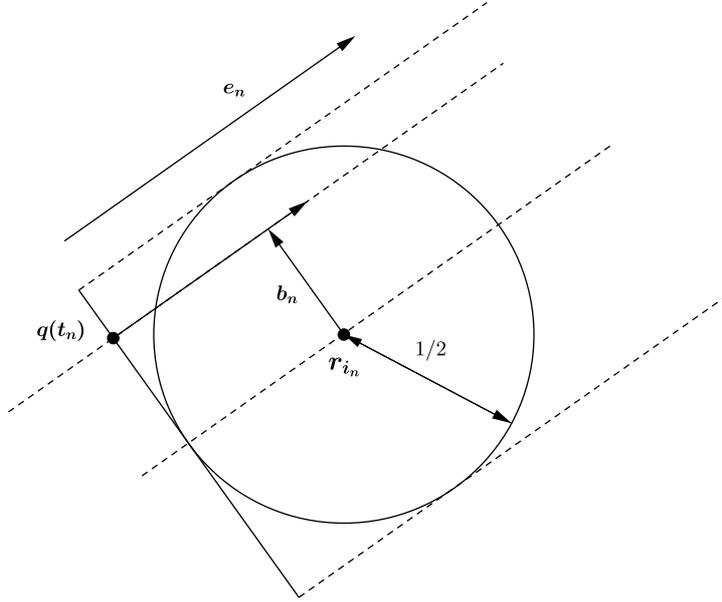}
\caption{A particle at time $t_n$ arriving with velocity $v_n$ and impact parameter $b_n$ on the $n$-th scatterer, centered at the point $r_{i_n}$.}
\label{fi:scatteringevent}
\end{figure}

The change in velocity experienced by a sufficiently fast particle at the $n$-th scattering event can be written
\begin{equation}\label{eq:vn}
v_{n+1}=v_n+R(v_n,b_n,\phi_{i_n}, \lambda_{i_n})
\end{equation}
where, for all $v\in \R^d$, $b\in \R^d$ with $v\cdot b=0$, and $(\phi,\lambda)\in \T^m\times \R,$
\begin{equation}\label{eq:momentumtransfer}
R(v,b,\phi,\lambda)=-\lambda\int_0^{+\infty}\rd t\, \nabla V(q(t),\omega t+\phi);
\end{equation}
here $q(t)$ is the unique solution of
\begin{equation}\label{eq:qonscatter}
\ddot{q}(t)=-\lambda \nabla V(q(t), \omega t +\phi), \quad q(0)=b-\frac{1}{2}\dfrac{v}{||v||}, \quad \dot{q}(0)=v. 
\end{equation}
We will always suppose the potential $V$ satisfies the following hypothesis: 
\begin{hypo}\label{hyp:firsthyp}
$V\in C^{\infty}(\R^d,\T^m)$ is bounded and of compact support in its spatial variable in the ball of radius $1/2$ at the origin. The potential $V$ and all its derivates are bounded, and we write
$$0<V_{max}:=||V||_{\infty}<+\infty.$$
Moreover, $(\omega\cdot \nabla_{\phi})V\neq 0$.
\end{hypo}

Equation \eqref{eq:vn} determines $v_{n+1}$ in term of $v_n$, $b_n$, $\phi_{i_n}$, $\lambda_{i_n}$. To determine $t_{n+1}$, $b_{n+1}$, $\lambda_{i_{n+1}}$, $\phi_{i_{n+1}}$, one would need to solve a geometric problem which consists in finding the location $r_{i_{n+1}}$ of the next scatterer visited by the particle. We shall present and study a simplified model of the dynamics in which this problem is eliminated. For that purpose note first that, once the particle leaves the $n$-th scatterer, it travels with a constant velocity $\|v_{n+1}\|$ over a distance $\ell_n$ before meeting the $n+1$-th scatterer. Hence $t_{n+1}=t_n+\frac{\ell_n}{\|v_{n+1}\|}$, where we ignored the duration of the scattering event itself. Furthermore $q_{n+1}=q_n+v_{n+1}(t_{n+1}-t_n)$ where $q_n=q(t_n)$.

Starting from this description of the dynamics and ignoring possible recollisions, we now model the solution $\left( \dot{q}(t), q(t)\right)$ of ~\eqref{eq:newton} by a coupled discrete-time Markov chain in momentum and position space as follows. Each step of the chain is associated to one scattering event. Thus, starting with a given initial velocity $v_0\gg 1$, we define iteratively the velocity $v_n$ and the time $t_n$ just before the $n$-th scattering event through the relations:
\begin{equation}\label{eq:finalrw}
\left.
\begin{array}{lll}
v_{n+1} & = & v_n+R\left(v_n, \kappa_n\right) \\
t_{n+1} & = & t_n+\dfrac{\ell}{\| v_{n+1}\|} \\
q_{n+1}&=& q_n+v_{n+1}(t_{n+1}-t_n),%
\end{array}%
\right\}
\end{equation}
where 
\begin{equation}\label{eq:kappa}
\kappa_n=(b_n, \phi_{n}, \lambda_{n}).
\end{equation}
Here the random variables $b_n$ are chosen independently at each step of the Markov chain and follow a uniform law in $B(0,\frac{1}{2})$ conditionally to $b_n\cdot v_n=0$. The variables $\lambda_{n}$ and $\phi_{n}$ are also sequences of independent random variables and identically distribued in $[-1,1]$ and $\T^m$ respectively.

Finally, note that we have added a very last simplification to this Markov chain by replacing the random variables $\ell_n$ by the mean distance $\ell$ between two scatterers successively visited by the particle. In this way the geometric problem associated to the distribution of the scatterers in the space is completely eliminated. 

This Markov chain provides a simplified but still highly non-trivial model for the original dynamical problem given in \eqref{eq:newton}. Note that the momentum change undergone by the particle during collisions is entirely encoded in the momentum transfer function $R(v,b,\phi,\lambda)$ (see \eqref{eq:momentumtransfer}) in both the original problem and the above Markov chain. The main simplifications in \eqref{eq:finalrw} come from the fact that we ignore geometric considerations (the spatial distribution of the $r_i$) as well as possible recollisions.

To state the main result of this paper, we introduce trajectories $\left( q(t)\right)_{t\in \R_+}$ where for all $n\in \N$, $q(t_n)=q_n$ is a solution of \eqref{eq:finalrw} and for all $t\in [t_n,t_{n+1}]$
\begin{equation}\label{eq:q(t)}
q(t)=q(t_n)+(t-t_n)v_{n+1}.
\end{equation}

\begin{theorem}\label{thm:finalonq}Suppose $d>5$. Then for all ${\nu}>0$ and $v_0\in \R^d$, there exist $c({\nu})>0$ depending on $\nu$ and $C(v_0,{\nu})>0$ depending on both $v_0$ and ${\nu}$ such that
\begin{equation}\label{eq:encadrementq}
\lim_{||v_0||\to +\infty}\P\left(\forall t>\frac{1}{\|v_0\|},\,  c({\nu}) t^{\frac{1}{5}-{\nu}}\leq ||\dot{q}(t)||\leq C(v_0,{\nu}) t^{\frac{1}{5}+{\nu}} \right)=1.
\end{equation}
\end{theorem}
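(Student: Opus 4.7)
The plan is to deduce Theorem~\ref{thm:finalonq} directly from the forthcoming Markov-chain result Theorem~\ref{thm:finalresult}, which is expected to furnish polynomial two-sided bounds of the form $c\,n^{1/6-\nu'}\leq \|v_n\|\leq C\,n^{1/6+\nu'}$, valid for every $n$ with probability tending to $1$ as $\|v_0\|\to\infty$. The exponent $1/6$ is dictated by scaling consistency: because $t_{n+1}-t_n=\ell/\|v_{n+1}\|$, one has $t_n\sim n^{1-\alpha}$ whenever $\|v_n\|\sim n^{\alpha}$, and demanding the target law $\|\dot q(t)\|\sim t^{1/5}$ forces $\alpha=(1-\alpha)/5$, hence $\alpha=1/6$ and $t_n\sim n^{5/6}$.

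On the high-probability event where the chain bounds hold, I would proceed in four steps. First, comparing sums to integrals converts the bounds on $\|v_k\|^{-1}$ into $c'\,n^{5/6-\nu''}\leq t_n\leq C'\,n^{5/6+\nu''}$, so $t_n\to+\infty$ and the interpolated trajectory from \eqref{eq:q(t)} is well-defined for every $t\geq 0$. Second, I associate to each $t$ the random index $n(t)$ defined by $t\in[t_{n(t)},t_{n(t)+1}]$, so that $\dot q(t)=v_{n(t)+1}$ by \eqref{eq:q(t)}. Third, inverting the previous sandwich yields $c''\,t^{6/5-\nu'''}\leq n(t)\leq C''\,t^{6/5+\nu'''}$. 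Fourth, substituting back into the bounds on $\|v_{n(t)+1}\|$ produces the claimed $c(\nu)\,t^{1/5-\nu}\leq\|\dot q(t)\|\leq C(v_0,\nu)\,t^{1/5+\nu}$, provided $\nu',\nu'',\nu'''$ are chosen small enough that the accumulated exponent shifts remain below $\nu$.

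The restriction $t>1/\|v_0\|$ in the statement accommodates the short initial window before the chain estimates become effective. On this window $\|\dot q(t)\|$ is comparable to $\|v_0\|$, so the lower bound $c(\nu)\,t^{1/5-\nu}\leq\|v_0\|$ is automatic for large $\|v_0\|$, while the upper bound $\|v_0\|\leq C(v_0,\nu)\,t^{1/5+\nu}$ holds as soon as $t\gtrsim\|v_0\|^{-5/(1+5\nu)}$, which is weaker than $t>1/\|v_0\|$ after absorbing a factor into $C(v_0,\nu)$; this is precisely why the upper constant is allowed to depend on $v_0$.

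The only real obstacle is the proof of Theorem~\ref{thm:finalresult} itself: all the bookkeeping above amounts to Riemann-sum comparisons for $\sum_{k\leq n} k^{-1/6\mp\nu'}$, inversion of monotone power laws, and accounting for exponent drift, none of which poses any difficulty once the polynomial control $\|v_n\|\asymp n^{1/6}$ on the chain has been secured. The hard work of the paper therefore sits in establishing the Bessel scaling limit of Theorem~\ref{thm:scaling} and the exit-time estimates of Sections~\ref{s:auxprocess}--\ref{s:finalproof}, which together should yield the polynomial two-sided bounds on $\|v_n\|$ that the present deduction takes as input.
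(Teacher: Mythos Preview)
Your proposal is correct and follows essentially the same route as the paper: invoke Theorem~\ref{thm:finalresult} to control $\|v_n\|$, sum $\|v_k\|^{-1}$ to bound $t_n$, invert, and interpolate via~\eqref{eq:q(t)}. The only imprecision is in how you quote the Markov-chain result: Theorem~\ref{thm:finalresult} is stated for $\xi_k=\|v_k\|^3/(3D)$ and yields $(\xi_0+k^{1/2})^{1-\nu}\le\xi_k\le(\xi_0+k^{1/2})^{1+\nu}$, not bare powers $n^{1/6\pm\nu'}$; the $\xi_0$-shift is what makes the bounds valid uniformly in $k$ (including small $k$) and is what the paper uses in~\eqref{eq:q-1}--\eqref{eq:q-3} to handle the initial window cleanly, but your treatment of that window is adequate once constants are allowed to depend on $v_0$.
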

The proof of Theorem~\ref{thm:finalonq} is given in Section~\ref{s:mainresult} where the role of the condition $d>5$ will be explained.
In order to establish this theorem, we have to analyze the behaviour of the first equation of~\eqref{eq:finalrw},
\begin{equation}\label{eq:v_n}
v_{n+1}=v_n+R(v_n,\kappa_n),
\end{equation}
for $v_n$ large. For that purpose, we need to understand the behaviour of the momentum transfer $R(v_n,\kappa_n)$ in \eqref{eq:momentumtransfer}. Using first order perturbation theory, we can write (see~\cite{adblp}),
$$R(v,\kappa)=-\frac{\lambda}{||v||}\int_{-\infty}^{+\infty} \rd y\,\nabla V\left(b+(y-\frac{1}{2})e, \dfrac{\omega t}{||v||}+\phi\right)+O\left(||v||^{-3}\right),$$
with $b\cdot v=0$. As $V$ is sufficiently smooth, we have the following expansion for $K\in \N$, $(v,\kappa)=(v, b, \phi,\lambda)\in \R^{2d}\times \T^m\times \R$  with $b\cdot v=0$:
\begin{equation*}\label{eq:devR}
R(v,\kappa)=\sum_{k=1}^K\dfrac{\alpha^{(k)}(e,\kappa)}{||v||^k}+O(||v||^{-K-1}), \qquad e=\dfrac{v}{||v||},
\end{equation*}
with
\begin{equation*}\label{eq:alpha1}
\alpha^{(1)}(e,\kappa)=-\lambda\int_{-\infty}^{+\infty}\rd y\ \nabla V\left(b+(y-\frac{1}{2})e, \phi\right).
\end{equation*}
Note that $e\cdot\alpha^{(1)}(e,\kappa)=0$. Then, if we look at the energy transfer
\begin{equation}\label{eq:DeltaE}
\Delta E(v,\kappa)=\dfrac{1}{2}\left((v+R(v,\kappa))^2-v^2\right),
\end{equation}
we have 
\begin{equation}\label{eq:devDeltaE}
\Delta E(e,\kappa)=\sum_{\ell=0}^L\dfrac{\beta^{(\ell)}(e,\kappa)}{||v||^\ell}+O(||v||^{-L-1}),
\end{equation}
where $\beta^{(0)}=e\cdot \alpha^{(1)}=0$ and $\beta^{(1)}=e\cdot \alpha^{(2)}$. Consequently, the first term in \eqref{eq:devDeltaE} is equal to $0$ and $\Delta E(e,\kappa)\sim ||v||^{-1}$.
The following theorem (see~\cite{adblp}) describes the average energy transfer during a unique collision.
\begin{theorem}\label{thm:adblp}
For all unit vectors $e\in \R^d$, $\overline{\alpha^{(1)}(e)}=0=\overline{\alpha^{(2)}(e)}$.
Moreover, for all $v\in \R^d$
$$ \overline{\Delta E(v)}=\dfrac{B}{ \| v\|^4}+O(\| v\|^{-5}), \quad \overline{(\Delta E(v))^2}=\dfrac{D^2}{\| v\|^2}+O(\| v\|^{-3}),$$
where 
$$B=\dfrac{d-3}{2}D^2$$
and
$$D^2=\dfrac{\overline{\lambda^2}}{C_d}\int_{\T^m} d\phi\,\int_{\R^{2d}}dq_0\, dq_0'\, ||q_0-q_0'||^{1-d}\partial_t\, V(q_0,\phi)\partial_t\, V(q_0',\phi)>0,$$
where $C_d$ is the volume of the sphere of radius $\frac12$ in $\R^{d-1}$. In particular, for all unit vectors $e\in \R^d$ and for all $\ell=1,2,3$,
$$\overline{\beta^{(\ell)}(e)}=0, \quad B=\overline{\beta^{(4)}(e)}, \text{ and } D^2=\overline{(\beta^{(1)}(e))^2}>0.$$
\end{theorem}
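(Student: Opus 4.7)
The plan is to derive all of the stated identities from a systematic perturbative expansion of the scattering flow. Rescaling time as $s=\|v\|t$, equation~\eqref{eq:qonscatter} becomes
\[
q''(s) \;=\; -\frac{\lambda}{\|v\|^2}\,\nabla V\Bigl(q(s),\frac{\omega s}{\|v\|}+\phi\Bigr),\qquad q(0)=b-\tfrac12 e,\quad q'(0)=e,
\]
which I would iterate around the free trajectory $q_0(s)=b+(s-\tfrac12)e$, each iteration picking up a factor $\lambda/\|v\|^2$. Substituting the resulting expansion into~\eqref{eq:momentumtransfer} and Taylor expanding $V$ in both the trajectory correction $q-q_0$ and the slow phase $\omega s/\|v\|$ around $\phi$ produces the coefficients $\alpha^{(k)}(e,\kappa)$ as explicit $y$-integrals of $V$ and its derivatives along the straight line $y\mapsto b+(y-\tfrac12)e$, organised by their power of $\lambda$. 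A key identity used throughout is
\[
e\cdot\nabla_q V\bigl(b+(y-\tfrac12)e,\phi\bigr)=\frac{\rd}{\rd y}V\bigl(b+(y-\tfrac12)e,\phi\bigr),
\]
so any contraction with $e$ of a $q$-gradient becomes a total $y$-derivative whose boundary contributions vanish by the compact $q$-support of $V$.

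From this one reads off $e\cdot\alpha^{(1)}(e,\kappa)=0$ pointwise, hence $\beta^{(0)}=0$. Since both $\alpha^{(1)}$ and $\alpha^{(2)}$ remain linear in $\lambda$ (the first $\lambda^2$-correction from the trajectory deviation enters only at order $\|v\|^{-3}$), averaging against $\lambda\sim\mathrm{Unif}[-1,1]$ gives $\overline{\alpha^{(1)}(e)}=\overline{\alpha^{(2)}(e)}=0$ and $\overline{\beta^{(1)}(e)}=0$ immediately. The remaining vanishings $\overline{\beta^{(2)}}=\overline{\beta^{(3)}}=0$ are more delicate: at these orders the $\lambda^2$-contributions from the first trajectory correction $q_1$ survive the $\lambda$-average and must cancel against the $\tfrac12|R|^2$-pieces in $\Delta E=v\cdot R+\tfrac12|R|^2$. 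I would display these cancellations by combining the $y$-integration by parts above with integration by parts in the periodic phase ($\int_{\T^m}\partial_\phi(\cdot)\,\rd\phi=0$); conceptually they are energy-conservation identities at second order in $\lambda$ and leading order in $1/\|v\|$.

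With those vanishings in hand, $D^2=\overline{\bigl(e\cdot\alpha^{(2)}(e)\bigr)^2}$ and $B=\overline{\beta^{(4)}(e)}$ are the first surviving moments. A single integration by parts in $y$ applied to the slow-phase expansion produces
\[
e\cdot\alpha^{(2)}(e,\kappa)\;=\;\lambda\int_{-\infty}^{+\infty}\rd y\,(\omega\cdot\nabla_\phi V)\bigl(b+(y-\tfrac12)e,\phi\bigr),
\]
so $\overline{(e\cdot\alpha^{(2)})^2}$ becomes an integral over $b\in e^\perp$ of a product of two X-ray transforms of $\omega\cdot\nabla_\phi V$. Rotational averaging over the direction $e$ (legitimate because the right-hand side of the announced formula is manifestly rotation-invariant), together with the coarea formula, converts this into the announced integral over $(q_0,q_0')\in\R^{2d}$ with kernel $\|q_0-q_0'\|^{1-d}$; the hypothesis $\omega\cdot\nabla_\phi V\neq 0$ from Hypothesis~\ref{hyp:firsthyp} ensures $D^2>0$.

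Finally, the identity $B=\tfrac{d-3}{2}D^2$ is a geometric/fluctuation-dissipation-style relation. By rotational symmetry about $e$, $\overline{\alpha^{(1)}_i\alpha^{(1)}_j}$ is proportional to $\delta_{ij}-e_ie_j$, so its trace carries a factor $d-1$; this is the dimensional origin of the $d$-dependence. Combining this with the non-trivial $\lambda^2$- and $\lambda^4$-contributions to $\overline{\beta^{(4)}}$ coming from $(e\cdot\alpha^{(2)})^2$, from $\tfrac12\overline{|R|^2}$ at subleading order, and from the iterated trajectory corrections inside $e\cdot\alpha^{(3)}$, $e\cdot\alpha^{(4)}$ and $e\cdot\alpha^{(5)}$, and reducing every surviving piece to an integral of type $\partial_tV\cdot\partial_tV$ via the two integration-by-parts identities in $y$ and $\phi$, yields the announced coefficient $(d-3)/2$. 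The principal obstacle of the proof is precisely this $\|v\|^{-4}$-bookkeeping: a significant number of contributions of varying $\lambda$-degree arise, and they must be collapsed via well-chosen integrations by parts so that the cancellations are manifest and the net coefficient of $D^2$ emerges cleanly.
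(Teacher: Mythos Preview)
The paper does not contain a proof of this theorem: it is quoted from~\cite{adblp} (see the sentence introducing Theorem~\ref{thm:adblp}: ``The following theorem (see~\cite{adblp}) describes the average energy transfer during a unique collision.''). There is therefore no in-paper argument to compare your proposal against.

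That said, your outline is the natural strategy and is, in spirit, what the cited reference does: rescale time, expand the trajectory perturbatively around the free motion, organise $R$ and $\Delta E$ in inverse powers of $\|v\|$, and exploit the two integration-by-parts identities (in the longitudinal variable $y$ and in the periodic phase $\phi$) together with averaging in $(b,\phi,\lambda)$ to kill the low-order terms and reduce the survivors to the $\partial_t V\cdot\partial_t V$ integral. Two remarks are in order. First, your justification of $\overline{\alpha^{(1)}}=\overline{\alpha^{(2)}}=0$ via $\lambda\sim\mathrm{Unif}[-1,1]$ is stronger than what the paper assumes (only ``i.i.d.\ in $[-1,1]$''); the vanishing in fact follows already from the average over the impact parameter $b$, since the transverse part of $\alpha^{(1)}$ is a $b$-gradient and its integral over the disk reduces to a boundary term where $V$ vanishes. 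Second, as you yourself note, the genuine work is the $\|v\|^{-4}$ bookkeeping leading to $B=\tfrac{d-3}{2}D^2$; your proposal identifies the ingredients but does not carry out the cancellations, so it remains a plan rather than a proof.
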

Theorem~\ref{thm:adblp} and ~\eqref{eq:DeltaE} yield
\begin{equation}\label{eq:Deltav_n}
\Delta ||v_n||^3=3\beta_n^{(1)}+\dfrac{3\left(\beta_n^{(4)}+\frac{1}{2}\left(\beta_n^{(1)}\right)^2\right)}{||v_n||^3}+O_0\left(||v_n||^{-1}\right)+O\left(||v_n||^{-4}\right).
\end{equation}
Here $\Delta ||v_n||^3=||v_{n+1}||^3-||v_n||^3$ where $(v_n)_n$ is the stochastic process defined by \eqref{eq:v_n} and $O_0\left(||v_n||^{-1}\right)$ designates a term of $O\left(||v_n||^{-1}\right)$ with zero average. Introducing
\begin{equation}\label{eq:xi}
\xi_n=\dfrac{||v_n||^3}{3D},\quad \omega_n=\dfrac{\beta_n^{(1)}}{D}, \text{ and } \gamma=\dfrac{1}{3}\left(\dfrac{B}{D^2}+\dfrac{1}{2}\right)=\frac{1}{6}(d-2)\geq -\frac{1}{6},
\end{equation}
and using \eqref{eq:Deltav_n}, we obtain the discrete Markov chain with values in $\R$
\begin{equation}\label{eq:rwxi}
\xi_{n+1}=\xi_n+\omega_n+\dfrac{\gamma}{\xi_n}+O_0\left(\xi_n^{-\frac{1}{3}}\right)+O\left(\xi_n^{-\frac{4}{3}}\right),
\end{equation}
with $\langle \omega_n\rangle=0$, $\langle \omega_n^2\rangle=1$.
To understand the behaviour of the system's kinetic energy, it remains therefore to study the Markov chain $\left( \xi_n\right)_n$, a task we turn to in the following sections. In particular, Theorem~\ref{thm:finalresult} is a technical result valid for a class of Markov chains including $\left(\xi_n\right)_n$ defined by \eqref{eq:rwxi}.


\section{Strategy of the proof}\label{s:mainresult}
We start with a remark that explains the origin of the condition $d>5$ in Theorem~\ref{thm:finalonq}. Note that under Hypothesis~\ref{hyp:firsthyp}, a global solution of \eqref{eq:qonscatter} always exists. Nevertheless, the integral in \eqref{eq:momentumtransfer} may not converge. Indeed, it is conceivable that for given $v=\dot{q}(0)$, the solution satisfies $\|q(t)\|\leq \frac12$ for all $t>0$ large. In other words, the particle may not leave the scattering region after having entered it: it is trapped. In this case the integral in \eqref{eq:momentumtransfer} may not converge. As shown in \cite{adblp}, and as is intuitively obvious, this will not happens if $\|v\|$ is large enough (meaning $v>12|\lambda|\, \|\nabla V\|_\infty$, see \cite{adblp}). The particle will then exit the scattering region after a finite time of order $\|v\|^{-1}$. We will show below below that for $d>5$ (this means $\gamma>\frac{1}{2}$ in \eqref{eq:xi}), the Markov chain \eqref{eq:xi} is transient. This implies an initially fast particle never slows down so that there is no trapping and the chain is well defined.

We will consider a slightly more general class of Markov chains, which may be of interest on its own, and which is defined as follows. 
Let $(\omega_k)_{k\in\N}$ be a family of bounded, i.i.d. real random variables, with zero mean and whose variance equals $1$:
\begin{equation}\label{eq:iid}
\E(\omega_k)=0,\qquad \E(\omega_k^2)=1, \qquad \exists M\geq 1,\ |\omega_k|\leq M.
\end{equation}
 We will denote their common probability measure by $\mu$. Let $F:\R^+_*\times [-M, M]\to \R^+_*$ be a measurable function satisfying the following properties:
\begin{hypo}\label{hyp:one} $\exists \gamma\in\R, 0<\xi_-<\xi_+$, $\alpha>0$, $\beta>1$, such that
$F$ is continuous on $[\xi_-, +\infty[\times [-M, M]$ and, for all $\xi>\xi_+$,
\begin{equation}\label{eq:Fasym}
F(\xi, \omega)= \xi +\omega +\frac{\gamma}{\xi}+G_0(\xi,\omega)+G_1(\xi,\omega),
\end{equation}
and where the functions $G_0$ and $G_1$ are such that, for large $\xi$, 
\begin{equation}\label{eq:g0g1}
\sup_{\omega}\left\vert G_0(\xi,\omega)\right\vert =O\left( \xi^{-\alpha}\right)\text{ and }\sup_{\omega}\left\vert G_1(\xi,\omega)\right\vert =O\left( \xi^{-\beta}\right),
\end{equation}
with $\alpha>0$ and $\beta>1$. Moreover, $\E\left(G_0(\xi,\cdot)\right)=0$.
\end{hypo}
We will study the asymptotic behaviour of the Markov chains 
\begin{equation}\label{eq:Mchain}
\xi_{k+1} = F(\xi_k, \omega_k),\quad \xi_0>0.
\end{equation}
Note that the Markov chain described by \eqref{eq:rwxi} satisfies Hypothesis~\ref{hyp:one}.
The following result is the main technical ingredient for the proof of Theorem~\ref{thm:finalonq}.

\begin{theorem}\label{thm:finalresult}
Suppose $\gamma>\frac{1}{2}$. Then 
\begin{itemize}
\item[(i)]For all $0< {p} \leq 1$, for all $\nu>0$,  there exists $\xi_*>\xi_+$ such that for all $\xi_0\geq \xi_*$, we have
\begin{equation*}\label{eq:paslim}
\P\left(\forall k\in\N,\, \left(\xi_0+k^{\frac{1}{2}}\right)^{1-\nu}\leq \xi_k \leq\left(\xi_0+k^{\frac{1}{2}}\right)^{1+\nu}\right)\geq 1-{p},
\end{equation*}
\item[(ii)]For all $\nu>0$, we have
\begin{equation*}
\lim_{\xi_0\to+\infty}\P\left(\forall k\in\N,\, \left(\xi_0+k^{\frac{1}{2}}\right)^{1-\nu}\leq \xi_k \leq\left(\xi_0+k^{\frac{1}{2}}\right)^{1+\nu}\right)=1.
\end{equation*}
\end{itemize}
\end{theorem}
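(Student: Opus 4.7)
The plan is to analyse the chain through two complementary observables. A one-step expansion reveals that
$$\E\bigl[\xi_{k+1}^2 - \xi_k^2 \mid \xi_k\bigr] = 2\gamma + 1 + O\bigl(\xi_k^{-\alpha}\bigr),$$
so $\xi_k^2$ has positive drift $2\gamma+1>2$ under the hypothesis $\gamma>1/2$, suggesting the typical size $\xi_k \sim \sqrt{(2\gamma+1)\,k}$; in particular $\xi_k$ will sit comfortably inside the window $[(\xi_0+\sqrt{k})^{1-\nu},(\xi_0+\sqrt{k})^{1+\nu}]$ once $\xi_0$ is large. The task is to upgrade this pointwise heuristic to a pathwise statement, uniform in $k$, with probability close to $1$.

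First I would work with an auxiliary process $\tilde\xi_k$ obtained by freezing $\xi_k$ below a large threshold $\xi_\ast \gg \xi_+$, so that the asymptotic expansion \eqref{eq:Fasym} is always valid and the modified chain never drops into the region where $F$ might not even be defined. Mimicking the fact that $h(r)=r^{1-2\gamma}$ is harmonic for the Bessel process of dimension $2\gamma+1$, a second order Taylor expansion of $h(\tilde\xi_{k+1})$ shows that the leading terms cancel and $h(\tilde\xi_k)$ is, up to a summable error, a bounded supermartingale (since $\gamma>1/2$ forces $1-2\gamma<0$). Doob's maximal inequality then yields the \emph{lower envelope}
$$\P\bigl(\exists k,\ \xi_k \leq \tfrac12\xi_\ast\bigr) \leq 2\,(\xi_0/\xi_\ast)^{1-2\gamma},$$
which vanishes as $\xi_0\to \infty$. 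This simultaneously excludes trapping and certifies transience.

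Next I would invoke the Averaging Theorem~\ref{thm:scaling}, which gives weak convergence of $(\xi_{\lfloor Nt\rfloor}/\sqrt{N})_t$ to a Bessel process of dimension $2\gamma+1$, to obtain dyadic control. On a time block $k\in[2^n,2^{n+1}]$, standard tail estimates for the supremum and positive infimum of the limiting Bessel on $[1,2]$ translate, via the scaling, into
$$\P\Bigl(\sup_{k\in [2^n,2^{n+1}]}\bigl|\log\xi_k-\tfrac12\log k\bigr|\geq \tfrac{\nu}{2}\log k\Bigr)\leq \varepsilon_n,$$
with $\varepsilon_n$ summable over $n$ once $\xi_0$ is large. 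The logarithmic scale is dictated by the width of the target window. Combining the auxiliary-process lower envelope, the dyadic block estimates, and a union bound over $n\geq n_0(\xi_0)$ yields the uniform-in-$k$ statement in both (i) and (ii); the transition $k\ll \xi_0^2$ vs. $k\gg \xi_0^2$ is handled by noting that $(\xi_0+\sqrt{k})^{1\pm \nu}$ is approximately $\xi_0^{1\pm \nu}$ in the first regime and $k^{(1\pm\nu)/2}$ in the second.

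The main obstacle, as I see it, is converting the weak convergence of Theorem~\ref{thm:scaling} into block-by-block probability bounds with explicit enough dependence on the starting point to be summable across dyadic scales. This requires either a tightness/skeleton argument or a direct concentration estimate for the rescaled chain; both must absorb the martingale-type remainder $O_0(\xi^{-\alpha})$ and the deterministic tail $O(\xi^{-\beta})$ without the chain ever wandering near the low-$\xi$ region where the perturbative expansion fails. The strict inequality $\gamma>1/2$ plays a dual role throughout: it makes the limiting Bessel transient (yielding the lower bound) and makes the drift of $\xi_k^2$ strictly exceed the noise contribution $\E[\omega_k^2]=1$ (yielding the upper bound).
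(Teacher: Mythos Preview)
Your lower-envelope argument via the scale function $h(r)=r^{1-2\gamma}$ is sound and in fact tidier than the paper's route, which invokes $h$ only at the level of the limiting Bessel process (Lemma~\ref{lem:exittimeBessel}). The genuine gap is in the upper envelope, and it is exactly the obstacle you name but do not resolve. Theorem~\ref{thm:scaling} carries no rate: Portmanteau gives $\limsup_{\varepsilon\to0}\P(R^\varepsilon\in A)\le\P(R\in A)$ for each \emph{fixed} closed $A$, and nothing more. Your dyadic-block scheme asks for bounds on $\P(R^{\varepsilon_n}\in A_n)$ with both the scale $\varepsilon_n$ and the target set $A_n$ changing with $n$, and for these bounds to be summable. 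Weak convergence alone cannot deliver this. Moreover, the rescaling at block $n$ is by $\xi_{2^n}$, which is itself the random quantity you are trying to pin down, so the scheme is circular without an additional inductive device.

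The paper supplies that device by discretising in space rather than in time. It builds an auxiliary nearest-neighbour process $\eta_\ell\in\Z$ and stopping times $\tau_\ell$ with $\xi_{\tau_\ell}\in[2^{\eta_\ell}-L,2^{\eta_\ell}+L]$ (Section~\ref{s:auxprocess}). The jump probabilities $\P(\eta_{\ell+1}=\eta_\ell+1\mid\eta_\ell,\dots,\eta_0)$ are then compared, via Portmanteau applied to a \emph{single fixed} set, to the Bessel exit probability $p_+=\P(T_2<T_{1/2})>\tfrac12$; this yields a bound valid uniformly for all $\eta_\ell$ above a threshold (Proposition~\ref{prop:jumpproba}(i)), with no rate required. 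Summability is then manufactured one level up: Burkholder's inequality on the martingale part of the Doob decomposition of $\eta_\ell$ gives $\P(|M_\ell|>\tfrac{\delta}{2}\ell)\lesssim\ell^{-2}$ (Proposition~\ref{prop:jumpproba}(ii)), and the dwell times $\tau_{\ell+1}-\tau_\ell$ are handled by iterating the Markov property to produce geometric decay (Lemma~\ref{lem:exittimebound}, Proposition~\ref{prop:dwelltimes}). The final assembly reads off $\xi_{\tau_\ell}\sim 2^{\eta_\ell}\sim 2^{\mu\ell}\sim\sqrt{\tau_\ell}$ and interpolates between the $\tau_\ell$. This spatial-doubling construction is the missing idea in your plan.
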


This asymptotic behaviour can be anticipated from the following observation. Let us consider the special case where $F$ is of the form \eqref{eq:Fasym} for all $\xi>0$ (and not only for large $\xi$) and drop the two last errors terms, i.e $F(\xi)=\xi+\omega+\frac{\gamma}{\xi}$. This is possible if $2\sqrt \gamma>M$, as is easily checked. In that case, one readily finds that
$$
\xi_{k+1}^2=\xi_k^2 + 2\gamma +\omega_k^2  +2\omega_k\left(\xi_k+ \frac{\gamma}{\xi_k}\right) +\frac{\gamma^2}{\xi_k^2},
$$
so that
$$
\E(\xi_{k+1}^2)=\E(\xi_k^2) +(2\gamma +1)+ \gamma^2 \E(\xi_k^{-2}).
$$
It follows that, for all $k\geq 2$, 
\begin{equation}\label{eq:borne}
 \xi_0^2 +(2\gamma +1)k\leq \E(\xi_{k}^2)\leq \xi_0^2 +(2\gamma +1+\frac14\frac{\gamma}{(1 -\frac{M}{2\sqrt\gamma})^2})k.
\end{equation}
It shows that, indeed, $\E(\xi_k^2)$ behaves as $k$ in this simple case. Of course, this information on the second moment of $\xi_k$ does not imply the statement of the Theorem ~\ref{thm:finalresult}, even in this case. Conversely, the statement of the Theorem ~\ref{thm:finalresult} does not allow to draw conclusions on the moments of $\xi_k$, since we have no control on the trajectories on a set of small probability.

\begin{figure}[t]
\psset{xunit=1cm,yunit=0.9cm}
\begin{pspicture}(19,9)
\rput[bl](3,0){\includegraphics[width=19cm,height=9cm]{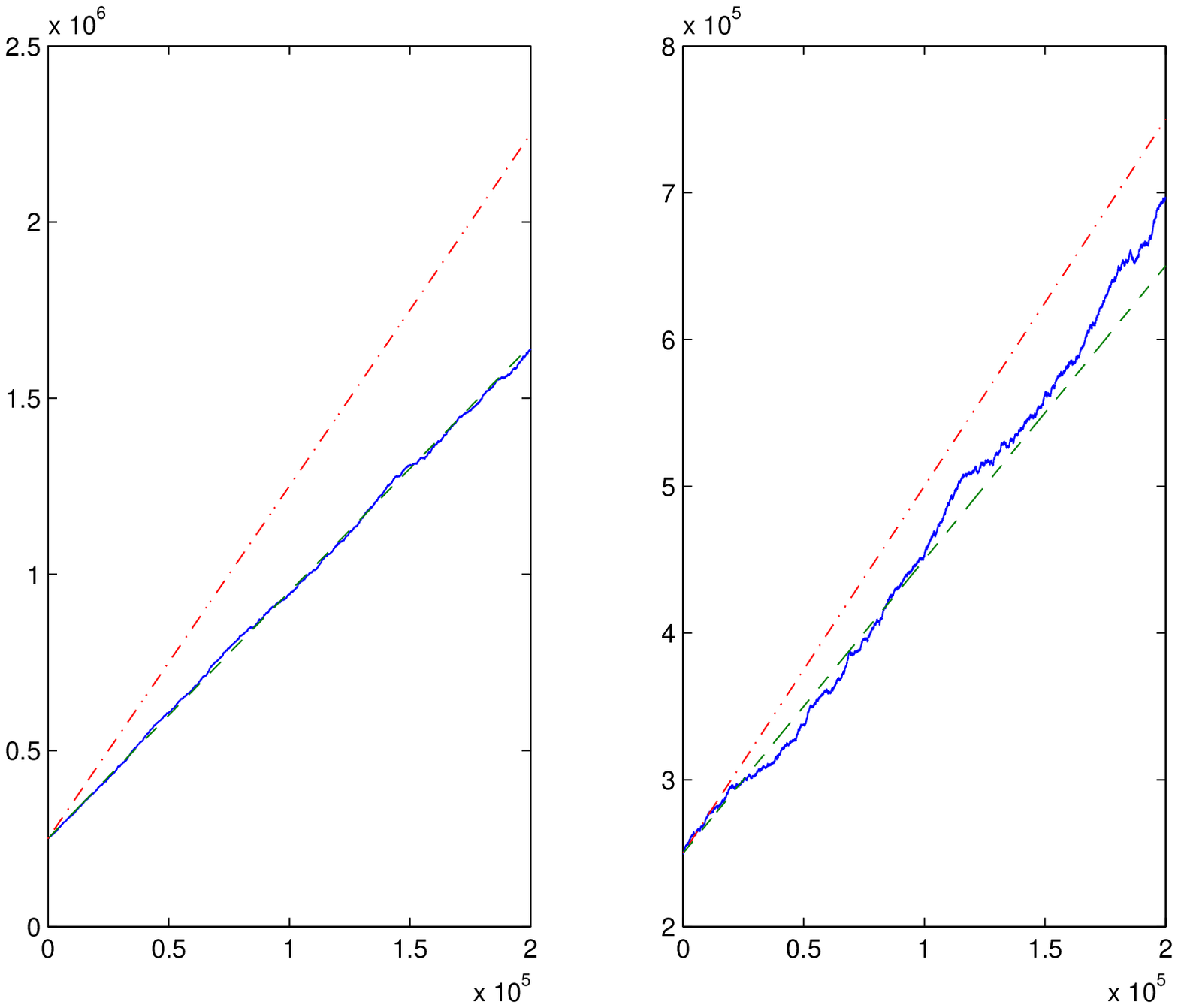}}
\rput[bl](6.2,8.15){\psline[linecolor=red](0.13,0)\psline[linecolor=red](0.23,0)(0.235,0)\psline[linecolor=red](0.335,0)(0.465,0)}
\rput[bl](16,3){\psline[linecolor=vert](0.13,0)\psline[linecolor=vert](0.23,0)(0.36,0)\psline[linecolor=vert](0.46,0)(0.59,0)}
\rput[bl](8.2,1.5){$\gamma=3$}
\rput[bl](16.6,1.5){$\gamma=\frac{1}{2}$}
\begin{scriptsize}
\rput[l](7,8){$k\to\xi_0^2 +(2\gamma +1+\frac14\frac{\gamma}{(1 -\frac{M}{2\sqrt\gamma})^2})k $}
\rput[l](16.8,3){$k\to \xi_0^2+(2\gamma+1)k$}
\end{scriptsize}
\end{pspicture}
\caption{Illustration of \eqref{eq:borne} with different values of $\gamma$. Mean of $500$ simulations of a Markov chain $(\xi_k)_k$.}
\label{fi:espsquare}
\end{figure}

Another way to anticipate the asymptotic behaviour of $\xi_k$ is to notice that the Markov chain 
$$
\xi_{k+1}=\xi_k+\omega_k+ \frac{\gamma}{\xi_k}
$$
can be thought of as a time discretized version of the stochastic differential equation satisfied by a Bessel process $R_t$ of dimension $2\gamma+1$:
$$
\rd R_t = \rd B_t +\frac{\gamma}{R_t}\rd t,
$$
where $B_t$ is a standard Brownian motion and $\gamma>-\frac{1}{2}$. It is of course well known (see~\cite{RevuzYor}) that $R_t\sim \sqrt{ (2\gamma +1) t}$ when $\gamma>-\frac{1}{2}$. In Section~\ref{s:scaling}, a rigorous version of this observation constitutes the first step of  the proof of Theorem~\ref{thm:finalresult}. Indeed, we introduce a family of rescaled processes $R_t^\epsilon$ and then show that the $R_t^\epsilon$ converge, as $\epsilon\to 0$, to a Bessel process with $R(0)=1$ (Theorem~\ref{thm:scaling}). We note that the transience and recurence of various time and space discretized versions of the Bessel process are discussed in~\cite{A11} and~\cite{CFR09} but no results on their asymptotic behaviour are obtained there.

Observe that in Hypothesis~\ref{hyp:one} no assumption is made on the behaviour of the chain when $\xi<\xi_-$. Such information is unavailable in the application we have in mind as we already indicated, and it is therefore important to see what can be said without it. Clearly, one cannot hope to obtain general results valid for all $\gamma$, without such additional information. Indeed, if $\gamma$ is too small, the trajectories will reach the region $]0, \xi_+]$ with probability one, and the asymptotic behaviour of the chain will then depend crucially on the behaviour of $F$ in that region. This can be seen for example when $\gamma=0$, and $F(\xi,\omega)=\xi+\omega$, for all $\xi\geq \xi_-$. In that case,  we are dealing with an ordinary random walk for $\xi>\xi_-$, which is recurrent. If then $\xi_->M$ and $F(\xi,\omega)=0$ for all $\xi<\xi_-$,  it is clear that, with probability $1$, $\lim_{k\to+\infty}\xi_k=0$ (and $\E(\xi_k)\to0$). On the other hand, if $F(\xi, \omega)=|\xi+\omega|$, $\forall \xi,\omega$, then $\E(\xi_k)\sim k^{1/2}$ and $\limsup_k \xi_k=+\infty$, with probability $1$. In short, when $\gamma$ is small, the chain is recurrent and one needs a ``non-trapping'' condition of the trajectories in the region $[0,\xi_+]$ to ensure the asymptotic behaviour of $\xi_k$ is still of the form $k^{1/2}$. 

Once we have Theorem~\ref{thm:finalresult}, we can show Theorem~\ref{thm:finalonq}.

\begin{proof}[Proof of Theorem~\ref{thm:finalonq}]
Theorem~\ref{thm:finalresult} \textit{ii)} and \eqref{eq:xi} yield that for all $\nu>0$
\begin{equation}\label{eq:q-1}
\lim_{\|v_0\|\to+\infty}\P\left(\forall n\geq 0;\, (3D)^{-\frac13}\left(\frac{\|v_0\|^3}{3D}+n^{\frac12}\right)^{-\frac{1+\nu}{3}}\leq \|\dot{q}(t_n)\|^{-1}\leq(3D)^{-\frac13}\left(\frac{\|v_0\|^3}{3D}+n^{\frac12}\right)^{-\frac{1-\nu}{3}}\right)=1.
\end{equation}
Furthermore, by \eqref{eq:finalrw} we have, for all $n\geq 1$, 
\begin{equation}\label{eq:tnsum}
t_n=\ell \sum_{j=1}^{n} \|v_j\|^{-1},\quad t_0=0.
\end{equation}
Combining \eqref{eq:q-1} and \eqref{eq:tnsum}, straightforward estimates show that for all $\nu>0$, the following bounds on $t_n$ hold,
\begin{equation}\label{eq:tn}
\lim_{\|v_0\|\to+\infty}\P\left(\forall n\geq 2;\, c_1(\nu)t_n^{\frac{6}{5+\nu}}\leq n\leq C_1(v_0,\nu)t_{n-1}^{\frac{6}{5-\nu}}\right)=1.
\end{equation}
Here $c_1(\nu)>0$ and $C_1(v_0,\nu)>0$ are two positive constants depending only on $\nu$ and $(v_0, \nu)$ respectively. This implies, by part \textit{ii)} of Theorem~\ref{thm:finalresult}, that for all ${\nu}>0$
\begin{equation}\label{eq:q-2}
\lim_{\|v_0\|\to+\infty}\P\left(\forall n\geq 2;\, c_2(\nu)t_n^{\frac{1-\nu}{5+\nu}}\leq \|\dot{q}(t_n)\|\leq C_2(v_0,\nu)t_{n-1}^{\frac{1+\nu}{5-\nu}}\right)=1.
\end{equation}
Then, as for all $t\in (t_{n-1},t_n]$, $\dot{q}(t)=\dot{q}(t_n)$ (see \eqref{eq:q(t)}), it follows from \eqref{eq:q-2} that
\begin{equation}\label{eq:q-3}
\lim_{\|v_0\|\to+\infty}\P\left(\forall t>\frac{\ell}{\|v_1\|} ;\, c(\nu)t^{\frac{1-\nu}{5+\nu}}\leq \|\dot{q}(t)\|\leq C(v_0,\nu)t^{\frac{1+\nu}{5-\nu}}\right)=1.
\end{equation}
Using \eqref{eq:q-1}, this result is easily extended to all $t>1/\|v_0\|$.


\end{proof}

The rest of this paper is devoted to the proof of Theorem ~\ref{thm:finalresult}. The strategy is the following. We will consider, in Section~\ref{s:scaling}, a family of Markov processes $R^{\eps}_n=\eps \xi_n$, indexed by $\eps=\xi_0^{-1}$. We show that after an appropriate rescaling of the time variable, the limit of this new family as $\eps\to 0$ is a Bessel process of dimension $2\gamma+1=\dfrac{d+1}{3}$ when $\gamma>\frac{1}{2}$ ($d>5$ in the initial problem). This yields Theorem~\ref{thm:scaling}. The proof of this averaging theorem is given in Appendix~\ref{s:appendix}.
In Section~\ref{s:auxprocess}, implementing a strategy developed in \cite{DK2009} for a similar problem, we define an auxiliary process $\eta_\ell\in \Z$ and corresponding stopping times $\tau_\ell$ such that, roughly, $\xi_{\tau_\ell}\sim 2^{\eta_\ell}$ (see Figure~\ref{fi:eta}). In other words, the increments of the process $(\eta_\ell)_\ell$ are $\pm 1$, and $\Delta \tau_\ell=\tau_{\ell+1}-\tau_{\ell}$ is the time the process $(\xi_n)_n$ needs to double or half its value.
In Section~\ref{s:exittimes}, we use Theorem~\ref{thm:scaling}, properties of the Bessel process and the Porte-Manteau Lemma to show that, provided $\gamma>\frac{1}{2}$ and $\eta_0$ is large enough, $(\eta_\ell)_\ell$ is a submartingale. We then control $\Delta \tau_\ell$. Basically, we show (Proposition~\ref{prop:jumpproba}) that there exists $\mu>0$ such that
$$\eta_\ell\sim \mu \ell+\eta_0, \text{ and } \Delta \tau_\ell\sim 2^{2\eta_\ell}. $$
In Section ~\ref{s:finalproof}, we use the results of Sections ~\ref{s:auxprocess} and ~\ref{s:exittimes} to conclude the proof of Theorem~\ref{thm:finalresult}.

We end this section with a further comment on~\cite{DK2009}. The authors of that paper study a similar model, in which however the force does not derive from a potential. In other words, it is not irrotational. In that case they show that, provided $\|\dot q(0)\|$ is large enough, and for $d\geq 4$,
$$
\|\dot q(t)\| \sim t^{1/3},\qquad \|q(t)\|\sim t^{4/3},
$$
with high probability. Note that the energy growth is faster here than when the force derives from a potential as in our case: it grows as $t^{2/3}$ as compared to $t^{2/5}$ in the latter situation. This faster growth allows the authors of~\cite{DK2009} to show the spatial trajectories of the particles do not self-intersect, so that recollisions do in fact occur only with very low probability. This in turn allows them to control the growth of $\|q(t)\|$. The situation under study in this paper is very different. As argued and shown numerically in~\cite{adblp}, the slower growth of the energy when the force does derive from a potential leads the particle to turn on a short time scale, so that self-intersections of the trajectory do occur and the growth of $\|q(t)\|$, as $t$, is slower than the power $t^{6/5}$ one could naively expect. In fact, the numerics of~\cite{adblp} indicates $\|q(t)\|\sim t$. We will come back to this aspect of the problem in a further publication. 

\section{A scaling limit}\label{s:scaling}
Let $\eps_*>0$, to be fixed later. We introduce $\eps=\xi_0^{-1}$, and define, for $\eps<\eps_*$,
$$
R^\eps_n:=\eps{\xi_n}.
$$
Note that $R^\eps_0=1$, independently of $\eps$. It then follow from \eqref{eq:Fasym} that $R^\eps_n$ satisfies
\begin{equation*}
R_{n+1}^\epsilon= G(\epsilon, R_n^\epsilon, \omega_n)
\end{equation*}
where, for $x> R_+^\eps=\eps\xi_+$
\begin{equation*}
G(\epsilon, x, \omega)=x+\epsilon\omega+\epsilon^2\gamma x^{-1}+\eps^{\alpha+1}G^\eps_0\left(x,\omega\right)+\eps^{\beta+1}G^\eps_1\left(x,\omega\right),
\end{equation*}
where $G^\eps_0$ and $G^\eps_1$ are such that
$$\sup_{\omega \in \Omega}\left\vert G^\eps_0(x,\omega)\right\vert =O\left( x^{-\alpha}\right)\text{ and }\sup_{\omega \in \Omega}\left\vert G^\eps_1(x,\omega)\right\vert =O\left( x^{-\beta}\right),$$
with $\alpha>0$ and $\beta>1$. Moreover, $\E\left(G^\eps_0(x,\cdot)\right)=0$.

We then construct a continuous time process by linear interpolation, as follows.  For $n\in\N$, $t_n=n\eps^2$ and for $t\in [t_n,t_{n+1}]$,
\begin{equation*}
R^{\epsilon}(t)=\frac{t_{n+1}-t}{\epsilon^2}R_n^\epsilon +\frac{t-t_n}{\epsilon^2}R_{n+1}^\epsilon.
\end{equation*}

\begin{theorem} \label{thm:scaling} Fix $T\in\R^+_*$. If $\gamma> 1/2$, the processes $(R^\epsilon_t)_{t\in[0,T]}$ converge weakly, as $\epsilon\to0$, to the Bessel process of dimension $2\gamma+1$, and with initial condition $1$. 
\end{theorem}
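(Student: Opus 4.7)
The plan is to identify the limit via the martingale problem for the Bessel generator
$$Lf(x)=\tfrac12 f''(x)+\frac{\gamma}{x}f'(x),\quad x>0,$$
tested against $f\in C_c^\infty((0,\infty))$, and then to invoke uniqueness of the solution starting from $1$. The core estimate is a third-order Taylor expansion of $f(R^\eps_{n+1})-f(R^\eps_n)$, combined with the moment information on $\omega_n$ and the bounds on $G^\eps_0,G^\eps_1$. Using $\E(\omega_n)=0$, $\E(\omega_n^2)=1$, $\E(G^\eps_0(x,\cdot))=0$, together with $\alpha>0$ and $\beta>1$ to absorb the terms $\eps^{\alpha+1}G^\eps_0$ and $\eps^{\beta+1}G^\eps_1$, a direct computation yields, uniformly on events where $R^\eps_n$ stays in a compact $[a,b]\subset(0,\infty)$,
$$\E\bigl[f(R^\eps_{n+1})-f(R^\eps_n)\mid\mathcal F_n\bigr]=\eps^2 Lf(R^\eps_n)+o(\eps^2),$$
with an analogous control on the conditional quadratic variation. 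Summing over $n\leq t/\eps^2$ shows that
$$M_t^{f,\eps}:=f(R^\eps(t))-f(1)-\int_0^t Lf(R^\eps(s))\,\rd s$$
is a martingale up to an asymptotically vanishing remainder.

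For tightness of $(R^\eps)_\eps$ in $C([0,T],\R)$ I would rely on the fact that, on a compact subset $[a,b]\subset(0,\infty)$, each increment of $R^\eps$ is bounded by $C\eps$ (since $|\omega_n|\leq M$ and $\gamma/R^\eps_n$ is uniformly bounded there) and the conditional second moment over $k$ consecutive steps is $O(k\eps^2)$. This gives the Kolmogorov-Chentsov criterion on events where $R^\eps$ remains in a compact subset of $(0,\infty)$, ensuring tightness with continuous limit trajectories.

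The main obstacle, and the step where the hypothesis $\gamma>\tfrac12$ is crucial, is to prove that $R^\eps$ does not approach $0$ on $[0,T]$, uniformly in $\eps$. The natural Lyapunov function is $h(x)=x^{1-2\gamma}$: a direct computation gives $Lh=0$, and $h$ is bounded on $[\delta,+\infty)$ only when $1-2\gamma<0$, which reflects the transience of a Bessel process of dimension $2\gamma+1>2$. Applying the martingale estimate above to a smooth compactly supported modification of $h$ and using optional stopping at $\tau_\delta^\eps:=\inf\{t\geq 0:R^\eps(t)\leq\delta\}$ gives a uniform bound of the form $\P(\tau_\delta^\eps\leq T)\leq C\delta^{2\gamma-1}$ for all small $\eps$, which vanishes as $\delta\to 0$. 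Combined with the tightness of the previous step, every subsequential weak limit lives in $C([0,T],(0,\infty))$, starts at $1$, and solves the martingale problem for $L$ on $C_c^\infty((0,\infty))$; uniqueness (see~\cite{RevuzYor}) identifies this limit with the Bessel process of dimension $2\gamma+1$, delivering weak convergence of the whole family.
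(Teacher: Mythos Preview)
Your outline is correct and shares the same skeleton as the paper's proof: tightness, identification via the martingale problem for $L=\tfrac12\partial_x^2+\gamma x^{-1}\partial_x$, and uniqueness of the Bessel law. The paper, however, organises the argument differently at the boundary. It works from the outset with the process $X^\eps$ stopped upon exit from $(\eta^{-1},\eta)$, proves precompactness of these stopped processes (via Theorem~1.4.11 of \cite{Stroock}, not Kolmogorov--Chentsov), shows every subsequential limit solves the stopped-Bessel martingale problem on the domain $\mathcal D_*=\{f:\lim_{x\to\eta^{\pm1}}Lf(x)=0\}$, and only then removes the stopping by appealing to the transience of the \emph{limiting} Bessel process: since $\P(\tau=2T)\to1$ as $\eta\to\infty$, the stopped and unstopped limits coincide. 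Your route is more direct and more quantitative: you control the prelimit via the scale function $h(x)=x^{1-2\gamma}$ and optional stopping, aiming at $\P(\tau_\delta^\eps\le T)\le C\delta^{2\gamma-1}$. Both methods exploit $\gamma>\tfrac12$ at exactly the same place (transience/integrability of the scale function), but the paper's soft argument avoids tracking error constants that blow up near~$0$.

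Two points in your sketch need tightening. First, ``Kolmogorov--Chentsov on events where $R^\eps$ remains in a compact'' is not a valid tightness criterion as stated; you must apply it to the \emph{stopped} process $R^\eps(\cdot\wedge\tau_\delta^\eps\wedge\tau_M^\eps)$, which is essentially what the paper does. Second, in your Lyapunov step the Taylor remainder involves $\|h'''\|_\infty$ on $[\delta,M]$, which is of order $\delta^{-2\gamma-2}$; summing $O(T/\eps^2)$ terms of size $O(\eps^3\delta^{-2\gamma-2})$ gives an error $O(\eps\,\delta^{-2\gamma-2})$, so the bound $\P(\tau_\delta^\eps\le T)\le C\delta^{2\gamma-1}$ holds only for $\eps<\eps_0(\delta)$, not uniformly in $\eps$. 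This is enough (fix $\delta$, then send $\eps\to0$), but the dependence should be stated. You also need a matching non-explosion bound $\P(\tau_M^\eps\le T)\to0$ as $M\to\infty$, uniformly for small $\eps$; this is easy via a second-moment estimate, but is not mentioned.
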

The condition on $\gamma$ guarantees that the limiting Bessel process is transient, does not explode in finite time and does not reach zero. This is an important element of the proof which is given in Appendix ~\ref{s:appendix}.

In addition, we will need the following result.
\begin{lemma}\label{lem:exittimeBessel}
Let $\gamma>-\frac12$, and let $R$ be a Bessel process of dimension $2\gamma+1$ with $R(0)=1$. Let, for $a_-<1<a_+$, 
$$
T_{a_-,a_+}=\inf\{t\geq 0 \mid R(t)\not \in ]a_-, a_+[\},\quad T_{a_-}=\inf\{t\geq 0 \mid R(t)<a_-\}, \quad T_{a_+}=\inf\{t\geq 0 \mid R(t)> a_+\}.
$$
(i) Then, for all $T\geq 0$,
\begin{equation*}\label{eq:Besselexit}
0<\P(T_{a_-, a_+}>T)<1.
\end{equation*}
(ii) If in addition $\gamma>\frac12$, 
\begin{equation*}\label{eq:Besseltransient}
\P(T_{a_-}>T_{a_+})=\dfrac{a_-^{1-2\gamma}-1}{a_-^{1-2\gamma}-a_+^{1-2\gamma}}
\end{equation*}
\end{lemma}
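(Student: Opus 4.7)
The plan splits naturally into the two parts of the statement.

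For part $(i)$, I would reduce everything to the corresponding statement for a Brownian motion by a Girsanov change of measure. On the stochastic interval $[0, T_{a_-, a_+}]$ the Bessel drift $\gamma/R(t)$ appearing in $\rd R = \rd B + (\gamma/R)\,\rd t$ is uniformly bounded by $|\gamma|/a_-$, so Novikov's criterion is trivially satisfied up to the bounded stopping time $T\wedge T_{a_-, a_+}$. This makes the law of $\bigl(R(t\wedge T_{a_-, a_+})\bigr)_{t\le T}$ and the law of $\bigl(1+B(t\wedge \widetilde T_{a_-, a_+})\bigr)_{t\le T}$ mutually absolutely continuous, with $B$ a standard Brownian motion and $\widetilde T_{a_-, a_+}$ its corresponding exit time from $(a_-, a_+)$. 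Both events $\{\widetilde T_{a_-, a_+}>T\}$ (by a standard small-ball / path-continuity argument applied to $B$ started at $1$) and $\{\widetilde T_{a_-, a_+}\le T\}$ (since $\sup_{s\le T}B(s)$ has full support in $[0,\infty)$) have strictly positive Brownian probability, so the same holds for the Bessel process, proving $0<\P(T_{a_-, a_+}>T)<1$.

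For part $(ii)$, my plan is the classical scale-function method. A direct calculation shows that $s(x)=x^{1-2\gamma}$ satisfies $\tfrac12 s''(x)+(\gamma/x)s'(x)=0$ on $(0,\infty)$, which is precisely the condition for $s$ to be harmonic for the generator of the Bessel process of dimension $2\gamma+1$. It\^o's formula applied to the Bessel SDE then shows that $M_t:=s(R(t))$ is a continuous local martingale. Setting $\tau:=T_{a_-}\wedge T_{a_+}$, the stopped process $(M_{t\wedge\tau})_{t\ge 0}$ is bounded (its values lie in the compact interval $[a_+^{1-2\gamma}, a_-^{1-2\gamma}]$ since $1-2\gamma<0$), so it is a genuine martingale and the optional stopping theorem applies as soon as $\tau<\infty$ almost surely.

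The hypothesis $\gamma>\tfrac12$ (equivalently, dimension $2\gamma+1>2$) is used in exactly one place: it is the standard criterion for transience of the Bessel process, so $R(t)\to+\infty$ almost surely, which forces $\tau<\infty$ a.s. Optional stopping then yields
\begin{equation*}
1 \;=\; s(1) \;=\; \E[s(R(\tau))] \;=\; \P(T_{a_-}<T_{a_+})\, a_-^{1-2\gamma} \,+\, \P(T_{a_+}<T_{a_-})\, a_+^{1-2\gamma},
\end{equation*}
and solving this single linear equation for $\P(T_{a_+}<T_{a_-})=\P(T_{a_-}>T_{a_+})$ gives exactly the formula announced. None of the steps is a real obstacle; the only delicate point is justifying $\tau<\infty$ via transience, which is precisely why the assumption $\gamma>\tfrac12$ is made in $(ii)$ but not in $(i)$.
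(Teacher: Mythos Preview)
Your argument for part~(ii) is correct and essentially identical to the paper's: both identify $M(t)=R(t)^{1-2\gamma}$ as a local martingale via It\^o's formula and apply optional stopping at $T_{a_-}\wedge T_{a_+}$. If anything, you are more careful than the paper in spelling out why optional stopping is justified (boundedness of the stopped process and finiteness of $\tau$ via transience), whereas the paper simply asserts that optional stopping applies.

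For part~(i) the paper gives no argument at all and simply defers to standard references (Mandl, Revuz--Yor, Ethier--Kurtz). Your Girsanov approach is a legitimate and self-contained way to obtain the statement: since the drift $\gamma/R$ is bounded on $[a_-,a_+]$, Novikov holds up to $T\wedge T_{a_-,a_+}$, and absolute continuity with respect to Brownian motion started at $1$ transfers the strict inequalities from the Brownian exit-time problem, where they are elementary. This buys you an explicit proof at the cost of invoking Girsanov; the paper's route buys brevity at the cost of opacity. One small caveat: for $\gamma\le 0$ the SDE representation $\rd R=\rd B+(\gamma/R)\,\rd t$ requires a word of care, but since you work only up to the exit time from $(a_-,a_+)$ with $a_->0$, the process stays strictly positive and the SDE is valid on that interval, so the argument goes through.
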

In order not to break the flow of the main argument, the proofs of Theorem~\ref{thm:scaling} and Lemma~\ref{lem:exittimeBessel} are given in Appendix~\ref{s:appendix}.


\section{An auxiliary process}\label{s:auxprocess}
Let $L>0$, $\eta\in \Z$ and define the intervals $J_\eta=[2^\eta-L, 2^\eta+L]$. We consider the subset $\mathcal{N}_L:= \bigcup_{\Z}J_\eta$ of $\R_*^+$ and we will study how the Markov chain $\left( \xi_k\right)_k$ visits successively $\mathcal{N}_L$ by introducing an auxiliary process $\left(\eta_\ell\in \Z\right)_\ell$ and corresponding stopping times $\tau_\ell$, so that $\xi_{\tau_\ell}\in J_{\eta_\ell}$, (see Figure~\ref{fi:eta}).
We start with a technical remark. Note that in Hypothesis~\ref{hyp:one}, $\xi_+$ can always be replaced by a larger value. It turns out to be convenient to work under the following further condition on $\xi_+$: 
$\xi_+\geq  \frac{|\gamma|}{M}$.
Under this hypothesis, one easily checks that
\begin{equation}\label{eq:stepsizecontrol}
\forall \xi_k> \xi_+,  \qquad \xi_{k+1}\in \left(\xi_k-C_{M,\gamma}, \xi_k+C_{M,\gamma}\right),
\end{equation}
where $C_{M,\gamma}=2M+C\dfrac{M^\alpha}{|\gamma|^\alpha}+C\dfrac{M^\beta}{|\gamma|^\beta}$, $C>0$.
This expresses the rather obvious fact that, for large enough $\xi_k$, the step size of the random walk is small compared to $\xi_k$.

Let us now define the process $(\eta_\ell)_{\ell\in\N}$ precisely. First, set
\begin{equation}\label{eq:etaplus}
\eta_+=\min\{\eta\in\N\ |\ 2^\eta > 2\max\{\xi_+, C_{M,\gamma}\}\}> 1,
\end{equation}
where the last inequality follows from the observation that $M\geq 1$ (See \eqref{eq:iid}). In view of \eqref{eq:etaplus}, one can choose $L$ satisfying $C_{M,\gamma}<L<2^{\eta_+-1}$, from which it follows that, for all $\eta,\eta'\geq\eta_+$, $\eta\not=\eta'$, we have $J_\eta\cap J_{\eta'}=\emptyset.$ Note that, in view of \eqref{eq:stepsizecontrol}, the process $\xi_k$ cannot jump across one of these intervals without visiting it. In this way, for all $\ell$, $\eta_{\ell+1}=\eta_\ell\pm 1$,as we will see.

We are now in  a position to define the process $\eta_\ell$, and the associated stopping times $\tau_\ell$ recursively, as follows. We restrict ourselves to initial conditions $\xi_0$ for which there exists an integer $\eta_0$ so that $\xi_0\in J_{\eta_0}$, with $\eta_0>\eta_+$. Note that if $\xi_0$ is not in such an interval, by Lemma~\ref{lem:exittimebound} we can control the time that the procces spend before entering in $J_{\eta_0}$. Then, define $\tau_0=0$ and
\begin{equation*}
\tau_1:=\inf \{k> \tau_0\ |\ \xi_k\in J_{\eta_0-1}\cup J_{\eta_0+1}\}.
\end{equation*}
We define
$$
\eta_1=\eta_0+1,\ \mathrm{if}\ \xi_{\tau_1}\in J_{\eta_0+1},\ \mathrm{and}\  \eta_1=\eta_0-1,\ \mathrm{if}\ \xi_{\tau_1}\in J_{\eta_0-1}.
$$   
We then proceed recursively. Suppose that, for some $\ell\in\N$,  $\tau_0, \eta_0, \tau_1, \eta_1, \dots \tau_\ell, \eta_\ell$ have been defined, with $\xi_{\tau_k}\in J_{\eta_k}$, for all $0\leq k\leq \ell$. If $\eta_\ell=\eta_+$, we define $\tau_{\ell+1}=\tau_\ell$  and $\eta_{\ell+1}=\eta_\ell$. Otherwise we define
\begin{align*}
\tau_{\ell+1} &= \inf\{k> \tau_\ell\ |\ \xi_k\in J_{\eta_\ell-1}\cup J_{\eta_\ell+1} \},\\
\eta_{\ell+1}=\eta_\ell+1 ,\ & \mbox{if}\ \xi_{\tau_{\ell+1}}\in J_{\eta_\ell+1},\,\,\mbox{and}\,\, \eta_{\ell+1}=\eta_\ell-1 ,\, \mbox{if}\ \xi_{\tau_{\ell+1}}\in J_{\eta_\ell-1}.
\end{align*}

\begin{figure}
\centering
\includegraphics[scale=0.9]{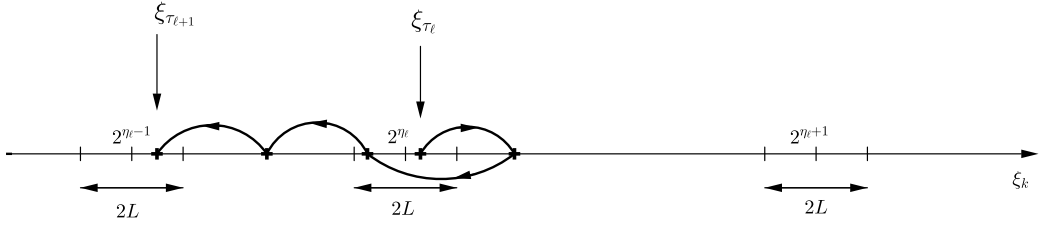}
\caption{$(\xi_k)_k$ visiting $\mathcal{N}_L$. Here, $\eta_{\ell+1}=\eta_\ell-1$.}
\label{fi:eta}
\end{figure}

We will show in this section that the process $(\eta_\ell)_\ell$ is asymptotically a submartingale, with high probability, and that $\eta_\ell\sim \mu \ell$, for some $\mu>0$ (see Proposition~\ref{prop:jumpproba} (ii)). In Section~\ref{s:exittimes}, we will combine this result with estimates on the dwell times $\tau_{\ell+1}-\tau_\ell$ between successive visits of the original process $\xi_k$ to $\mathcal{N}_L$, which we show to be of order $(2^{\eta_\ell})^2\sim 2^{2\mu\ell}$, to conclude that
$$
\tau_\ell\sim 2^{2\mu\ell}, \quad \mathrm{and\ hence}\quad \xi_{\tau_\ell}\sim \sqrt{\tau_\ell}.
$$
(See Proposition~\ref{prop:dwelltimes} (ii)\&(iv) for a precise statement.) It will then remain, in Section~\ref{s:finalproof}, to interpolate between the stopping times $\tau_\ell$ to obtain Theorem~\ref{thm:finalresult}.

Note that the sequence $(\tau_\ell)_\ell$ is increasing, and we have the following dichotomy: either the sequence $(\tau_\ell)_\ell$ is strictly increasing, $\lim_{\ell\to+\infty}\tau_\ell = +\infty$, and $\forall \ell\in\N$, $\eta_\ell> \eta_+$, or  $\exists L_*\in \N$ and $T_*>0$ so that $\tau_\ell=T_*$  and $\eta_\ell=\eta_+$, forall $\ell\geq L_*$. 

There is no reason to think the process $\eta_\ell$ is still a Markov process, specifically $(\eta_\ell)_\ell$ describe the behaviour of $(\xi_k)_k$ on interval:
\begin{align*}
\P\left(\eta_{\ell+1}=\eta\pm 1|\eta_\ell=\eta, \eta_{\ell-1}=\eta\pm 1\right)&=\P\left(\xi_{\tau_{\ell+1}}\in J_{\eta\pm 1}| \xi_{\tau_\ell}\in J_\eta, \, \xi_{\tau_{\ell-1}}\in J_{\eta\pm 1}\right)\\
& \neq \P\left(\xi_{\tau_{\ell+1}}\in J_{\eta\pm 1}| \xi_{\tau_\ell} \in J_\eta \right).
\end{align*}
Actually, it depends on if $\xi_{\tau_\ell}$ is rather on the left than on the right of $J_{\eta_\ell}$.

To control its asymptotic behaviour, we will  show it is, with high probability, a submartingale, if $\eta_0$ is sufficiently large, and control its jump probabilities $\P(\eta_{\ell+1}=\eta_\ell\pm1\ |\ \eta_\ell,\dots,\eta_0)$. (See Proposition~\ref{prop:jumpproba} (i).)  We note that the transience of the chain $\left(\xi_k\right)_k$ is essential in the arguments of this section; it is, as we shall see, ensured by the condition that $\gamma>\frac12$. The main properties of the process $\eta_\ell$ are summarized in the following proposition.
\begin{proposition}\label{prop:jumpproba} 
\begin{itemize}
\item[(i)]  Suppose $\gamma> \frac{1}{2}$. For all $\delta>0$ there exists $\tilde\eta>\eta_+$ such that for all $\ell\in \N^*$ and for almost all $\eta_0,\eta_1, \dots, \eta_{\ell-1}\geq\eta_+$, $ \eta_\ell>\tilde\eta$, we have
\begin{equation}\label{eq:jumpproba}
|\P\left(\eta_{\ell+1}=\eta_\ell\pm1 | \eta_\ell, \dots,\eta_0\right)- p_\pm|<\delta,
\end{equation}
where $p_+=\frac{2^{2\gamma-1}-1}{2^{2\gamma-1}-2^{1-2\gamma}}>\frac12$ and $p_-=1-p_+$. 
\item[(ii)] For all $0<p\leq 1$ and for all $\delta>0$, there exists $\eta_*>\eta_+$ so that for all $\eta_0\geq\eta_*$
\begin{equation*}\label{eq:coneetal}
\P\left(\left|\eta_\ell-\mu\ell-\eta_0\right|\leq\delta\left(\ell+\eta_0\right),\forall \ell\in\N \right)\geq 1-p,
\end{equation*}
where $\mu=2p_+-1>0$.
\item[(iii)]For all $0<p\leq 1$, there exists $\eta_*>\eta_+$ so that for all $\eta_0\geq\eta_*$,
\begin{equation*}
\P\left(\forall \ell\in\N,\eta_\ell\geq \dfrac{\eta_0}{2}\right)\geq 1-p.
\end{equation*}
\end{itemize}
\end{proposition}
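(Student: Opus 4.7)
The three parts are proved in the order (i), (iii), (ii). For (i), the starting point is the strong Markov property of $(\xi_k)$ at the stopping time $\tau_\ell$: conditionally on $\mathcal{F}_{\tau_\ell}$ and on $\xi_{\tau_\ell}=\xi\in J_{\eta_\ell}$, the post-$\tau_\ell$ dynamics is the chain \eqref{eq:Mchain} started at $\xi$. Setting $\eps=1/\xi$, the rescaled process $R^\eps_n=\eps\,\xi_{\tau_\ell+n}$ starts at $R^\eps_0=1$ and, by Theorem~\ref{thm:scaling}, converges weakly on any compact interval to the Bessel process $R$ of dimension $2\gamma+1$ with $R(0)=1$. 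Under this rescaling, $J_{\eta_\ell\pm 1}$ is mapped to an interval around $2^{\pm 1}$ of width $O(L\,2^{-\eta_\ell})$, which collapses to the point $2^{\pm 1}$ as $\eta_\ell\to\infty$. The event $\{\eta_{\ell+1}=\eta_\ell+1\}$ is then pinched between the events that $R^\eps$ exits a slightly-larger/slightly-smaller interval around $(1/2,2)$ through the upper boundary; by the Portmanteau lemma applied to the limit $R$ (using continuity of $R$ and the fact that, for $\gamma>1/2$, $R$ does not dwell on the levels $1/2$ or $2$), both bounds converge to $\P(T_{1/2}>T_2)$, which Lemma~\ref{lem:exittimeBessel}(ii) with $a_-=1/2$, $a_+=2$ identifies as $p_+$.

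For (iii), I would use an exponential supermartingale. For the true biased walk on $\Z$ with jump probabilities $p_\pm$, the function $\rho^\eta$ is a martingale when $\rho=p_-/p_+$, and any $\rho\in(p_-/p_+,1)$ makes it a strict supermartingale. Choosing $\delta$ in (i) small enough relative to this strict slack, $\rho^{\eta_\ell}$ remains a supermartingale for the actual process $(\eta_\ell)$ as long as $\eta_\ell>\tilde\eta(\delta)$. Taking $\eta_0>2\tilde\eta$ and $\sigma=\inf\{\ell:\eta_\ell\leq\eta_0/2\}$, optional stopping on the process stopped at $\sigma$ yields $\P(\sigma<\infty)\leq\rho^{\eta_0/2}$, which tends to $0$ as $\eta_0\to\infty$ since $\rho<1$.

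For (ii), I would combine (i) and (iii). On the event provided by (iii) (of probability $\geq 1-p/2$), $\eta_\ell\geq\eta_0/2>\tilde\eta$ for all $\ell$, and (i) gives $|\E[\eta_{\ell+1}-\eta_\ell\mid\mathcal{F}_{\tau_\ell}]-\mu|\leq 2\delta$. The centered process $M_\ell=\eta_\ell-\eta_0-\sum_{k<\ell}\E[\eta_{k+1}-\eta_k\mid\mathcal{F}_{\tau_k}]$ is a martingale with $\pm 1$ increments, so Azuma--Hoeffding combined with a dyadic maximal bound yields $\sup_{\ell\geq 1}|M_\ell|/\sqrt{\ell\log(2+\ell)}\leq C$ with probability $\geq 1-p/2$. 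Splitting into $\ell\lesssim\eta_0$ (where the trivial bound $|\eta_\ell-\eta_0-\mu\ell|\leq(1+\mu)\ell\lesssim\eta_0$ already gives $\leq\delta(\eta_0+\ell)$ after shrinking $\delta$) and $\ell$ large (where both the martingale and the $2\delta\ell$ drift bias are dominated by $\delta\ell$), one obtains the uniform bound. I expect the main obstacle to lie in (i): although Theorem~\ref{thm:scaling} gives weak convergence for the fixed initial value $1$, the exit levels $\eps\cdot 2^{\eta_\ell\pm 1}$ carry residual $\eps$-dependence, so the exit event must be carefully pinched between nested continuity sets of the Bessel law whose boundaries approach $1/2$ and $2$; a secondary subtlety is that $(\eta_\ell)$ is not itself Markov, so the supermartingale arguments in (iii) and (ii) must be phrased in terms of the full conditional law given $\mathcal{F}_{\tau_\ell}$ rather than of a transition kernel on $\Z$.
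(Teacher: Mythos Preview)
Your treatment of part~(i) is the same as the paper's: the strong Markov property reduces the conditional jump probability to a bound, uniform over $\xi_0\in J_{\eta_\ell}$, on the probability that the chain exits through the upper interval; after rescaling one applies the Portmanteau lemma (to open and closed nested approximations of the exit event, with thresholds tending to $1/2$ and $2$) together with Theorem~\ref{thm:scaling}, and Lemma~\ref{lem:exittimeBessel}(ii) identifies the limit as $p_+$. You have correctly anticipated the main technical point, namely that the rescaled exit levels carry residual $\eps$-dependence and must be squeezed between fixed levels.

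For (ii) and (iii) you take a genuinely different route. The paper proves (ii) first, via the Doob decomposition $\eta_\ell=\eta_0+M_\ell+A_\ell$: it controls $M_\ell$ for $\ell\geq L$ using Burkholder's inequality and a Chebyshev/Borel--Cantelli argument, and then runs a \emph{bootstrap}. Starting from $\eta_0>\tilde\eta+L$, the trivial bound $|\eta_\ell-\eta_0|\leq\ell$ keeps $\eta_\ell>\tilde\eta$ for $\ell<L$; on the event $F_L=\{|M_\ell|\leq\tfrac{\delta}{2}\ell,\ \forall\ell\geq L\}$ the drift estimate from (i) then forces $|\eta_L-\eta_0-\mu L|\leq\delta L$, hence $\eta_\ell>\tilde\eta$ for $\ell\leq 2L$, and one iterates. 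Part~(iii) is then an immediate corollary of (ii) with $\delta$ small.

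You reverse the order: (iii) is obtained directly from (i) by the gambler's-ruin supermartingale $\rho^{\eta_\ell}$ with $\rho\in(p_-/p_+,1)$, stopped at $\sigma=\inf\{\ell:\eta_\ell\leq\eta_0/2\}$, giving the explicit bound $\P(\sigma<\infty)\leq\rho^{\eta_0/2}$. Then (ii) becomes easy because, on the event from (iii), (i) applies at \emph{every} step, so no bootstrap is needed; your Azuma--Hoeffding plus dyadic summation plays the same role as the paper's Burkholder bound. Both arguments are correct. Your ordering buys a cleaner proof of (ii) and an exponential (rather than merely ``for $\eta_0$ large'') rate in (iii); the paper's argument avoids introducing the auxiliary supermartingale but pays for it with the inductive self-improvement step. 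One small point to keep straight in your write-up: since $(\eta_\ell)$ is not Markov, the supermartingale inequality should be stated for the filtration $\sigma(\eta_0,\dots,\eta_\ell)$ (or $\mathcal F_{\tau_\ell}$), using that the bound in (i) is uniform over the conditioning history once $\eta_\ell>\tilde\eta$; you note this yourself in the last sentence.
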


We start with two preliminary observations. First, in what follows our notation will not distinguish between on the one hand the random variable $\P\left(A| \eta_\ell, \dots,\eta_0\right)$, viewed as a function on the underlying probability space or on $\N^{\ell+1}$, and on the other hend the values it takes at  points in $\N^{\ell+1}$, also denoted by $(\eta_\ell, \dots,\eta_0)\in\N^{\ell+1}$. Second, we will often make use of the following useful property of the process $\xi_k$, which is a consequence of its Markovian nature:
\begin{equation}\label{eq:markovineq}
\inf_{\xi_{k-1}\in I} \P(A\in \mathcal F_k^+|\xi_{k-1})\leq \P(A\in \mathcal F_k^+|\xi_{k-1}\in I, C\in \mathcal F_{k-2})\leq \sup_{\xi_{k-1}\in I} \P(A\in \mathcal F_k^+|\xi_{k-1}),
\end{equation}
where $I$ is an interval, $\mathcal F_{k-2}$ is the sigma-algebra generated by the $\xi_{k'}, 0\leq k'\leq k-2$
and $\mathcal F_k^+$ the sigma-algebra generated by the $\xi_{k'},  k'\geq k$. 
\begin{proof}
(i) Let $\eta_0,\eta_1,\dots, \eta_\ell>\eta_+$.  We then have 
\begin{eqnarray}\label{eq:etaell+1}
 \P\left(\eta_{\ell+1}=\eta_\ell+1 | \eta_\ell, \dots,\eta_0\right) &
 =&\sum_{0<i_1\leq\dots\leq i_\ell}\P\left(\eta_{\ell+1}=\eta_\ell+1| \eta_\ell, \dots,\eta_0;\ \tau_\ell=i_\ell, \dots, \tau_1=i_1\right)\nonumber\\
 &\ & \qquad\qquad\qquad\qquad\times\P\left( \tau_\ell=i_\ell, \dots, \tau_1=i_1  \mid \eta_\ell, \dots,\eta_0\right).
\end{eqnarray}
Here and in what follows, the values of $\eta_0,\dots,\eta_\ell$ and of the multi-indices  $i_j$ are restricted to values for which the set on which we condition has non-zero probability. Introducing, for all $i\in \N$ and for all $\eta>\eta_+$,  
$$
\tau_{+,i}(\eta)=\inf\{k\geq 0| \xi_{i+k}> 2^{\eta+1}-L\}\quad\mathrm{and}\quad \tau_{-,i}(\eta)=\inf\{k\geq 0| \xi_{i+k}<2^{\eta-1}+L \},
$$ 
we can write, for all $\eta_0,\eta_1,\dots, \eta_\ell>\eta_+$, and for all $0<i_1<\dots< i_\ell$, 
\begin{eqnarray} 
\P\left(\eta_{\ell+1}=\eta_\ell+1 |\eta_\ell, \dots,\eta_0;\ \tau_\ell=i_\ell, \dots, \tau_1=i_1\right)&\ &\nonumber\\
&\ &\hskip-6cm =\P\left(\tau_{+,i_\ell}(\eta_\ell)<\tau_{-,i_\ell}(\eta_\ell)| \eta_\ell, \dots,\eta_0;\ \tau_\ell=i_\ell, \dots, \tau_1=i_1\right)\nonumber\\
&\ &\hskip-6cm =\P\left(\tau_{+,i_\ell}(\eta_\ell)<\tau_{-,i_\ell}(\eta_\ell)| \xi_{i_\ell}\in J_{\eta_\ell}, \dots,\xi_{i_1}\in J_{\eta_1}, \xi_0\in J_{\eta_0};\ \tau_\ell=i_\ell, \dots, \tau_1=i_1\right).\label{eq:partition}
\end{eqnarray}
It then follows from \eqref{eq:markovineq} and the homogeneity of the process $\xi_k$ that
\begin{align}
\inf_{\xi_{0}\in J_{\eta_\ell}}\P(\tau_{+,0}(\eta_\ell)&<\tau_{-,0}(\eta_\ell)| \xi_{0})=\inf_{\xi_{i_\ell}\in J_{\eta_\ell}}\P\left(\tau_{+,i_\ell}(\eta_\ell)<\tau_{-,i_\ell}(\eta_\ell)| \xi_{i_\ell}\right)\nonumber\\
& \leq \P\left(\tau_{+,i_\ell}(\eta_\ell)<\tau_{-,i_\ell}(\eta_\ell)| \xi_{i_\ell}\in J_{\eta_\ell}, \dots,\xi_{i_1}\in J_{\eta_1}, \xi_0\in J_{\eta_0};\ \tau_\ell=i_\ell, \dots, \tau_1=i_1\right)\nonumber\\
& \leq \sup_{\xi_{i_\ell}\in J_{\eta_\ell}} \P\left(\tau_{+,i_\ell}(\eta_\ell)<\tau_{-,i_\ell}(\eta_\ell)| \xi_{i_\ell}\right)=\sup_{\xi_{0}\in J_{\eta_\ell}}\P\left(\tau_{+,0}(\eta_\ell)<\tau_{-,0}(\eta_\ell)| \xi_{0}\right).\nonumber
\end{align}
 Inserting this into \eqref{eq:partition} and using the result in \eqref{eq:etaell+1} finally yields
\begin{equation}\label{eq:encadrement}
\inf_{\xi_{0}\in J_{\eta_\ell}}\P\left(\tau_{+,0}(\eta_\ell)<\tau_{-,0}(\eta_\ell)| \xi_{0}\right)\leq \P\left(\eta_{\ell+1}=\eta_\ell+1 | \eta_\ell, \dots,\eta_0\right) \leq \sup_{\xi_{0}\in J_{\eta_\ell}}\P\left(\tau_{+,0}(\eta_\ell)<\tau_{-,0}(\eta_\ell)| \xi_{0}\right).
\end{equation}
We will now use the Porte-Manteau Theorem again to conclude the argument. For ease of notation, we shall write $\tau_{\pm}(\eta)=\tau_{\pm,0}(\eta)$ in what follows. Let $\eta>\eta_+$ and $\xi\in J_{\eta}$. We consider the set 
$E_\xi(\eta)$ defined as follows:
\begin{displaymath}
E_\xi(\eta)=\{ \tau_+(\eta)<\tau_-(\eta);\, \xi_0=\xi \},
\end{displaymath}
so that
\begin{equation}\label{eq:inf<infEn}
\inf_{\xi\in J_\eta}\P\left(\tau_+(\eta)<\tau_-(\eta)|\xi_0=\xi\right)= \inf_{\xi\in J_\eta}\P\left(E_\xi(\eta)\right),\qquad \sup_{\xi\in J_\eta}\P\left(\tau_+(\eta)<\tau_-(\eta)|\xi_0=\xi\right)= \sup_{\xi\in J_\eta}\P\left(E_\xi(\eta)\right).
\end{equation}
Noting that 
\begin{displaymath}
E_\xi(\eta)=\{ \forall k<\tau_+(\eta),\, \xi_k>2^{\eta-1}+L; \, \xi_0=\xi \}, 
\end{displaymath}
one sees, with the notation of Section~\ref{s:scaling} ($R^\eps_k=\frac{\xi_k}{\xi_0}$, and $\eps=\xi_0^{-1}$), that, provided $\xi\in J_\eta$,
\begin{displaymath}
E_\xi(\eta)\supset\{\forall k<\tau_+(\eta),\, R^\eps_k>\frac{1}{2}\sigma_-(\eta);\, R^\eps_0=1\},
\end{displaymath}
where $\sigma_-(\eta)=\dfrac{2^\eta+2L}{2^\eta-L}$. Note that $\sigma_-$  is a decreasing function of its argument which tends to $1$ as $\eta\to+\infty$. Let $\eta_*>\eta_+$, to be chosen later, as a function of $\delta$ in \eqref{eq:jumpproba}. Let $\eta>\eta_*$; it then follows that
\begin{displaymath}
E_\xi(\eta)\supset\{ \forall k<\tau_+(\eta),\, R^\eps_k>\frac{1}{2}\sigma_-(\eta);\, R^\eps_0=1\}\supset\{ \forall k<\tau_+(\eta),\, R^\eps_k>\dfrac{1}{2}\sigma_-(\eta_*);\, R^\eps_0=1\}.
\end{displaymath}
In order to apply the Porte-Manteau Theorem, we need to replace the stopping time $\tau_+(\eta)$ of $\xi_k$ by an appropriately chosen stopping time of the continuous time process $R^\epsilon(t)$ introduced in Section~\ref{s:scaling}. We will proceed in two steps. First we replace $\tau_+(\eta)$ by a stopping time $\tau^\eps_+$ for the discrete time process $R^\eps_k$, which is defined as follows: 
$$
\tau^\eps_+=\inf\{k\geq 0| R^\eps_k>2\sigma_+(\eta_*)\},
$$ 
where $\sigma_+(\eta_*)=\dfrac{2^{\eta_*}-\frac{L}{2}}{2^{\eta_*}-L}$  is also decreasing and tends to $1$ as $\eta_*\to +\infty$. One checks that $\tau^\eps_+\geq \tau_+(\eta)$, for all $\eta>\eta_*$, so that for all $\eta>\eta_*$, and for all $\xi\in J_\eta$, 
\begin{align*}
E_\xi(\eta)\supset\{ \forall k<\tau_+(\eta),\, R^\eps_k>\dfrac{1}{2}\sigma_-(\eta_*);\, R^\eps_0=1\}\supset\{\forall k<\tau^\eps_+,\, R^\eps_k>\dfrac{1}{2}\sigma_-(\eta_*);\, R^\eps_0=1\}.
\end{align*}
We next consider two stopping times $T_+^\eps$ and $T_-^\eps$  for the continuous time processes $\left(R^\eps(t)\right)_t$, defined as follows
\begin{displaymath}
T^\eps_+=\inf\{t\geq 0| R^\eps(t)=2\sigma_+(\eta_*)\}\quad \mbox{and}\quad T_-^\eps=\inf\{t\geq 0| R^\eps(t)=\frac{1}{2}\sigma_-(\eta_*)\}.
\end{displaymath}
It then follows from the definition  of $\left(R^{\eps}(t)\right)_t$ by linear interpolation of the $(R^\eps_k)_k$ between the times $t_k=k\eps^2$, and the fact that $\tau^\eps_+$ is an integer that $(\tau^\eps_+-1)\eps^2<T_+^\eps<\tau_+^\eps \eps^2$, so that $\frac{1}{2}\sigma_-(\eta_*)<R^\eps\left((\tau^\eps_+-1)\eps^2\right)<2\sigma_+(\eta_*)$ and $R^\eps\left(\tau^\eps_+\eps^2\right)>2\sigma_+(\eta_*)$. 
\begin{figure}
\centering
\includegraphics[scale=0.8]{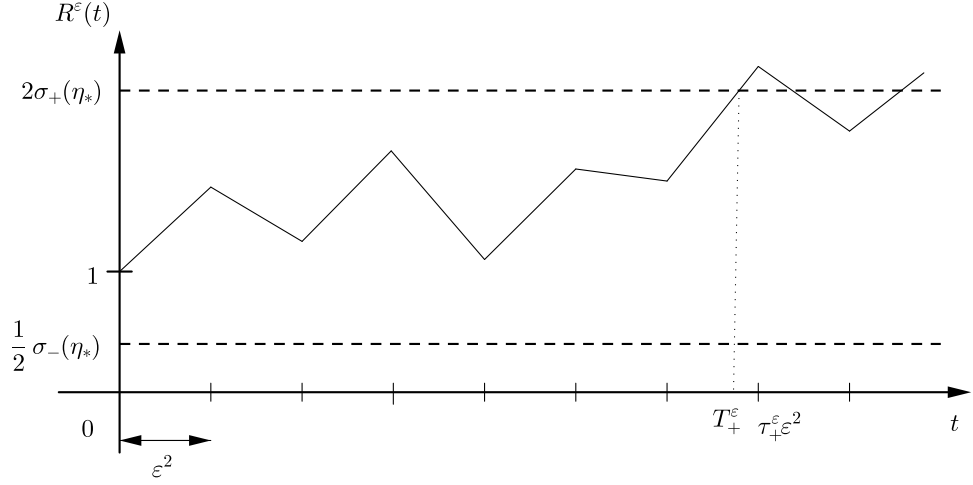}
\caption{The linear interpolation of the $\left(R^\eps_k\right)_k$ between $t_k$ and $t_{k+1}$ yields that $T_{\pm}^\eps$ happens before $\eps^2\tau_{\pm}^\eps$.}
\label{fi:raisonning}
\end{figure}
 Hence
\begin{displaymath}
\{\forall t<\tau^\eps_+\eps^2,\, R^\eps(t)>\dfrac{1}{2}\sigma_-(\eta_*);\, R^\epsilon(0)=1\}\supset \{\forall t<T^\eps_+,\, R^\eps(t)>\dfrac{1}{2}\sigma_-(\eta_*);\, R^\epsilon(0)=1\}.
\end{displaymath}
Finally, we may conclude that, for all $\eta>\eta_*$ and for all $\xi\in J_\eta$, with $\epsilon=\xi^{-1}$
\begin{equation}\label{eq:Ensupset}
\P\left(E_\xi(\eta)\right)\geq \P\left(\forall t<T^\eps_+,\, R^\eps(t)>\frac{1}{2}\sigma_-(\eta_*);\, R^\epsilon(0)=1\right).
\end{equation}
We will now apply the Porte-Manteau Theorem to get a lower bound on the right hand side of this inequality. For that purpose, we first remark that, for all $\eps$ and $\eta_*>\eta_+$, 
\begin{displaymath}
\P\left(\forall t<T^\eps_+,\, R^\eps(t)>\dfrac{1}{2}\sigma_-(\eta_*);\, R^\eps(0)=1\right)=\P\left(\forall t<T^\eps_+,\, R^\eps(t)>\dfrac{1}{2}\sigma_-(\eta_*);\, R^\eps(0)\in ]\frac{5}{6},\frac{7}{6}[\right),
\end{displaymath}
because $\P\left(R^\eps(0)\in]\frac{5}{6},1[\cup]1,\frac{7}{6}\right)=0$.
The set $\{\varphi\in\mathcal{C}([0,T],\R_+)| \forall t<{T_{a_+}},\, \varphi(t)>\frac{1}{2}\sigma_-(\eta_*);\, \varphi(0)\in]\frac{5}{6},\frac{7}{6}[\}$ where ${T_{a_+}}=\inf\{t\geq 0| \varphi(t)=a_+\}$, is open. Hence the Porte-Manteau Theorem together with Theorem~\ref{thm:scaling} and Lemma~\ref{lem:exittimeBessel} imply
\begin{equation}\label{eq:portemanteau}
\liminf_{\eps\to 0}\P\left(\forall t<T_+^\eps,\, R^\eps(t)>\frac{1}{2}\sigma_-(\eta_*)\right)\geq \P\left(\forall t<T_{a_+},\, R(t)>a_-\right)=\P(T_{a_+}<T_{a_-})=\dfrac{a_-^{1-2\gamma}-1}{a_-^{1-2\gamma}-a_+^{1-2\gamma}},
\end{equation}
where we use the notation of Lemma~\ref{lem:exittimeBessel} with $a_-=\frac{1}{2}\sigma_-(\eta_*)$, $a_+=2\sigma_+(\eta_*)$ and where $\left(R(t)\right)_t$ is a Bessel process of dimension $2\gamma+1$ and initial condition $R(0)=1$. Since $\sigma_\pm(\eta_*)\to 1$ when $\eta_*\to+\infty$, there exist $\eta_*$ large enough, depending only on $\delta$ and $L$, so that, 
$$
\P(T_{a_+}<T_{a_-})\geq p_+-\frac{\delta}{2}, \quad \mathrm{where}\quad p_+=\frac{2^{2\gamma-1}-1}{2^{2\gamma-1}-2^{1-2\gamma}}>\frac12,
$$
since $\gamma>\frac12$.
It then follows from \eqref{eq:portemanteau} that there exists $\tilde\eps$ so that  
\begin{equation}\nonumber
\inf_{\eps<\tilde\eps}\P\left(\forall t<T_+^\eps,\, R^\eps(t)>\frac{1}{2}\sigma_-(\eta_*)\right)\geq p_+-\delta,
\end{equation}
Combining this with \eqref{eq:encadrement}, \eqref{eq:inf<infEn} and \eqref{eq:Ensupset}, we obtain
\begin{equation*}\label{eq:liminffinal}
\inf_{\eta_\ell>\tilde\eta}\P\left(\eta_{\ell+1}=\eta_\ell+1|\eta_\ell,\dots\eta_0\right)\geq p_+-\delta,
\end{equation*}
where $\tilde\eta=\max\{ \eta_*, \log_2(\tilde\eps^{-1}+L)\}$.  
This is the desired lower bound on the jump probability of the process $\eta_\ell$. 

To control the upper bound in \eqref{eq:encadrement}, we proceed in the same manner. 
First, for all $\xi\in J_\eta$, $\eps=\xi^{-1}$, 
\begin{align*}
E_\xi(\eta)&=\{ \forall k<\tau_+(\eta),\, \xi_k>2^{\eta-1}+L;\, \xi_0=\xi\}
\subset \{ \forall k<\tau_+(\eta),\, R_k^\eps>\frac{2^{\eta-1}+L}{2^\eta+L};\, R^\eps_0=1\}\\
&\subset \{\forall k<\tau_+(\eta),\, R_k^\eps>\frac{1}{2};\, R^\eps_0=1\}.
\end{align*}
Now, let $\tilde\eta_*>\eta_+$, to be chosen later, and let $\eta>\tilde\eta_*$.  Consider the stopping time $\tilde{\tau}_+^{\eps}=\inf\{k\geq 0| \, R_k^{\eps}>2\tilde\sigma_+(\tilde\eta_*)\}$ where $\tilde{\sigma}_+(\tilde\eta_*)=\dfrac{2^{\tilde\eta_*}-\frac{L}{2}}{2^{\tilde\eta_*}+L}$. Note that $\tilde{\sigma}_+$ is increasing and converges to $1$ when $\tilde\eta_*\to +\infty$. One readily checks  that $\tilde{\tau}_+^\eps\leq \tau_+(\eta)$ and hence
\begin{align*}
\{ \forall k<\tau_+(\eta),\, R_k^\eps>\frac{1}{2};\, R^\eps_0=1\}&\subset \{ \forall k<\tilde{\tau}_+^\eps,\, R_k^\eps>\frac{1}{2};\, R^\eps_0=1\}\\
&\subset \{ \forall t<\tilde{T}_+^\eps,\, R^\eps(t)>\frac{1}{2};\, R^\eps(0)=1\},
\end{align*}
where $\tilde{T}_+^\eps=\inf\{t\geq 0| \, R^\eps(t)=2\tilde\sigma_+(\tilde\eta_*)\}$ and $(\tilde{\tau}_+^\eps-1)\eps^2\leq \tilde{T}_+^\eps<\tilde{\tau}_+^\eps\eps^2$.  Finally, we have
\begin{equation}\label{eq:Ensubset}
E_\xi(\eta)\subset\{ \forall t<\tilde{T}_+^\eps,\, R^\eps(t)>\frac{1}{2};\, R^\eps(0)=1\}.
\end{equation}
Set $\tilde a_+=2\tilde\sigma_+(\tilde \eta_*)$. Now, we can again use the Porte-Manteau Theorem and Theorem~\ref{thm:scaling}, because the set $\{\varphi\in \mathcal{C}\left([0,T],\R_+\right)|\, \forall t\leq {T}_{\tilde a_+}, \, \varphi(t)>\frac{1}{2};\, \varphi(0)=1\}$ where ${T}_{\tilde a_+}=\inf\{t\geq 0|\, \varphi(0)=1, \varphi(t)=\tilde a_+\}$ is closed. This leads to
\begin{equation}\label{eq:limsupEn}
\limsup_{\eps\to 0}\P\left(\forall t\leq \tilde{T}^\eps_+,\ R^\eps(t)>\frac{1}{2}\right)\leq \P\left(\forall t\geq {T}_{\tilde a_+},\, R(t)>\frac{1}{2}\right)
\end{equation}
where $\left(R(t)\right)_t$ is as before a Bessel process of dimension $2\gamma+1$ and initial condition $R(0)=1$. Defining $\tilde a_-=\frac12$, and using Lemma~\ref{lem:exittimeBessel}, we have
\begin{equation}\label{eq:TR2}
\P\left(\forall t\leq {T}_{\tilde a_+},\, R(t)>\frac{1}{2}\right)=\P\left({T}_{\tilde a_-}>{T}_{\tilde a_+}\right)=\frac{2^{2\gamma-1}-1}{2^{2\gamma-1}-\tilde a_+^{1-2\gamma}}.
\end{equation}
It follows from \eqref{eq:limsupEn} and \eqref{eq:TR2} that there exists $\tilde\eps$ depending on $\delta$ so that
$$
\sup_{\eps<\tilde\eps} \P\left(\forall t\leq \tilde{T}^\eps_+,\ R^\eps(t)>\frac{1}{2}\right)\leq p_++\delta.
$$
Combining  this with \eqref{eq:encadrement} and \eqref{eq:Ensubset}, we see there exists $\tilde\eta>\eta_+$ so that
\begin{equation*}\label{eq:limsupfinal}
\sup_{\eta_\ell>\tilde\eta}\P\left(\eta_{\ell+1}=\eta_\ell+1| \, \eta_\ell,\dots, \eta_0\right)\leq p_++\delta,
\end{equation*}
which is the desired upper bound.

(ii) Let $0<\delta<\mu$ and $0<p\leq 1$. We first write down the Doob decomposition (see \cite{EthierKurtz}) of $\eta_\ell$ explicitly:
\begin{equation*}
\eta_\ell = \eta_0 + M_\ell + A_\ell,
\end{equation*}
where
\begin{equation*}
M_\ell =\sum_{j=1}^{\ell} \left(\eta_j-\E\left(\eta_j|\eta_{j-1},\dots,\eta_0\right)\right),\quad\mathrm{and}\quad A_\ell = \sum_{j=1}^\ell \left( \E\left(\eta_j|\eta_{j-1}, \dots, \eta_0\right)-\eta_{j-1}\right).
\end{equation*}
As is well known, and easily checked, $M_\ell$ is a martingale with respect to the natural filtration induced by the process $\eta_\ell$, a fact we will use below. Now,
$$
\mid\eta_\ell -(\eta_0+\mu\ell)\mid \leq \mid M_\ell\mid  + \sum_{j=1}^\ell \mid\left[\left( \E\left(\eta_j|\eta_{j-1}, \dots, \eta_0\right)-\eta_{j-1}\right)-\mu\right]\mid.
$$
It then follows from part (i) of the Lemma that, for all $\delta>0$, there exists $\tilde\eta>\eta_+$ so that, 
\begin{equation}\label{eq:driftestimate}
\forall \ell\in \N, \left (\eta_{\ell-1},\dots, \eta_0>\tilde \eta\Rightarrow \mid\eta_\ell -(\eta_0+\mu\ell)\mid \leq \mid M_\ell\mid  + \frac\delta2 \ell\right).
\end{equation}
Now, for any $L>0$, define
\begin{equation*}\label{eq:FL}
F_L=\{|M_\ell|\leq \frac\delta2\ell, \forall \ell\geq L\}.
\end{equation*}
Then, on $F_L$, and provided $\eta_0>\tilde \eta+L>\eta_++L$, so that $\eta_{L-1},\dots, \eta_0>\tilde \eta$, one has
$$
\mid\eta_L -(\eta_0+\mu L)\mid \leq \delta L,
$$
so that in particular $\eta_L>\eta_0>\tilde \eta +L$. This in turn implies that $\eta_j>\tilde \eta$, for all $0\leq j\leq 2L$. 
\begin{figure}
\centering
\includegraphics[scale=0.5]{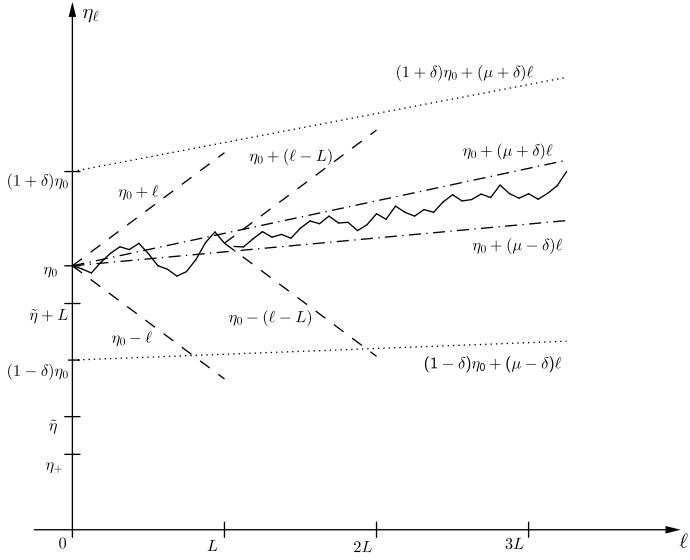}
\caption{A typical trajectory of $\eta_\ell$, on $F_L$, as defined in \eqref{eq:FL}.} 
\end{figure}
We can therefore apply \eqref{eq:driftestimate} for all $L\leq \ell \leq 2L$ to conclude that on $F_L$, and provided $\eta_0>\tilde \eta+L>\eta_++L$, one has
\begin{equation}\label{eq:driftestimatebis}
\mid\eta_\ell -(\eta_0+\mu\ell)\mid \leq \mid M_\ell\mid  + \frac\delta2 \ell\leq\delta\ell.
\end{equation}
Proceeding recursively, one then concludes that \eqref{eq:driftestimatebis} holds on $F_L$, for all $L\leq \ell$. For $0\leq \ell \leq L$, one has from \eqref{eq:driftestimate} that 
$$
\mid\eta_\ell -(\eta_0+\mu\ell)\mid \leq \mid M_\ell\mid  + \frac\delta2\ell\leq 2L+\delta\ell.
$$
Hence, if we choose $\eta_0>\frac{2L}{\delta}$, we can conclude that, 
$$
\forall\, 0\leq \ell \leq L;\,
\mid\eta_\ell -(\eta_0+\mu\ell)\mid \leq  \delta(\eta_0+\ell).
$$
From this, and \eqref{eq:driftestimatebis}, we conclude that, for all $\delta>0$ and all $L>0$, if 
$$
\eta_0>\eta_*=\max\{\tilde \eta+L, \frac{2L}{\delta}\},
$$
then
\begin{equation}\label{eq:drift3}
\P\left(\forall \ell\in\N, \mid\eta_\ell -(\eta_0+\mu\ell)\mid \leq  \delta(\eta_0+\ell)\right)\geq \P(F_L).
\end{equation}
It remains to show that, given $\delta$ and $p$, there exists $L$ so that 
$$
\P(F_L)\geq 1-p
$$
to conclude the proof. For that purpose, let us introduce the quadratic variation of $M_\ell$,
$$
D_\ell^2=\sum_{i=1}^{\ell} (M_{i}-M_{i-1})^2.
$$
The Burkholder inequality (see \cite{Karatzas}) then says that, for all $\ell\in\N$, exists a constant $C>0$
$$
\E\left((\max_{0\leq i\leq \ell}|M_i|)^4\right)\leq C \E(D_\ell^4).
$$
The definition of the $\eta_\ell$ and of the martingale $M_\ell$ immediately imply that, with probability one, $|M_\ell-M_{\ell-1}|\leq 2$, for all $\ell$. This implies immediately that $\E(D_\ell^4)\leq 4\ell^2$. Hence, by the Tchebychev inequality,
$$
\P\left(|M_\ell|> \frac\delta2\ell\right)\leq \P\left(\max_{0\leq i\leq \ell}|M_i|>\frac\delta2\ell\right)\leq
\frac{2^4}{\delta^4\ell^4}C4\ell^2=\frac{\tilde{C}}{\delta^4\ell^2},
$$
where $\tilde{C}$ is a numerical constant. It then follows that
$$
\P(F_L)\geq 1-\sum_{\ell\geq L} \P(|M_\ell|> \frac\delta2\ell)\geq 1-\frac{\tilde{C}}{\delta^4L}.
$$
Choosing $L=\frac{\tilde{C}}{\delta^4p}$, the result now follows from \eqref{eq:drift3}.

(iii) This is an immediate consequence of (ii).
\end{proof}


\section{Estimates on the dwell times $\tau_\ell-\tau_{\ell-1}$}\label{s:exittimes}
As explained in the introduction of Section~\ref{s:auxprocess}, having obtained the asymptotic behaviour of $\eta_\ell$, we now need to control the stopping times $\tau_\ell$ and show that with high probability they behave, roughly, as $\tau_\ell\sim \xi_{\tau_\ell}^2\sim 2^{2\eta_\ell}$. We turn to this task in this section, the main result of which is stated in Proposition~\ref{prop:dwelltimes} (ii)\&(iv). For that purpose, we will first estimate the dwell times $\tau_\ell-\tau_{\ell-1}$ (Proposition~\ref{prop:dwelltimes} (i)\&(iii)). Roughly speaking, this is the time the process needs to move from $\xi_{\tau_{\ell-1}}$ to either $2\xi_{\tau_{\ell-1}}$ or to $\frac12\xi_{\tau_{\ell-1}}$. As we will see in Lemma~\ref{lem:exittimebound}, the latter can be estimated from above and from below using Theorem~\ref{thm:scaling}, together with the Porte-Manteau Theorem and Lemma~\ref{lem:exittimeBessel} (ii), a task we now turn to.

Let us define, for all $n_0\in\N$, for all $b_-< 1<b_+$, and for all $r>0$, the stopping time
\begin{equation}\label{eq:Kn0}
K_{n_0}=\inf\{ k\in\N\ |\ \xi_{n_0+k}\not\in ]b_- r, b_+r[\}.
\end{equation}

If $\xi_{n_0}\not\in  ]b_- r, b_+r[$, $K_{n_0}=0$. Otherwise, $K_{n_0}> 0$: $n_0+K_{n_0}-1$ is then the last instant that the process is still inside the interval $]b_-r, b_+r[$. The following lemma gives the bounds on $K_{n_0}$ that we shall be needing.

\begin{lemma}\label{lem:exittimebound} Suppose Hypothesis~\ref{hyp:one} holds and that $\gamma\geq 1/2$. (i) There exists $\xi_*>\xi_+$ and $0<q_-<1$, so that, for all $m\in\N_*$, for all $n_0\in\N$,
\begin{equation*}\label{eq:Kupperbd}
\sup_{r\geq \xi_*}\sup_{\xi_{n_0}\in]b_-r, b_+ r[}\P\left(m r^2< K_{n_0}\ |\ \xi_{n_0}\right)\leq q_-^m\, <1.
\end{equation*}
 (ii) Let $b'_-, b_+'$ be such that
$b_-<b'_-< 1< b'_+< b_+$. Then there exists $\xi_*>\xi_+$ and $0<q_+<1$ so that, for all $n_0\in\N$,
\begin{equation}\label{eq:Klowerbd}
\sup_{r\geq \xi_*}\sup_{\xi_{n_0}\in]b_-'r, b_+^, r[}\P\left( K_{n_0}\leq r^2\ |\ \xi_{n_0}\right)\leq q_+\, <1.
\end{equation}
\end{lemma}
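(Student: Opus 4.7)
The overall strategy is to reduce both bounds to a single-scale estimate on $\P(K_{n_0}>r^2\mid\xi_{n_0})$, and then to obtain this estimate by rescaling the chain as in Section~\ref{s:scaling} and transferring the question to a Bessel process via Theorem~\ref{thm:scaling} and Lemma~\ref{lem:exittimeBessel} (i).

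For Part (i), I would slice the horizon into $m$ blocks of length $r^2$ and apply the strong Markov property at the times $n_0+jr^2$. On $\{K_{n_0}>jr^2\}$ the state $\xi_{n_0+jr^2}$ still lies in $]b_-r,b_+r[$, so conditioning successively yields
\begin{equation*}
\P\bigl(K_{n_0}>mr^2\mid\xi_{n_0}\bigr)\leq\Bigl(\sup_{\xi\in]b_-r,b_+r[}\P\bigl(K_0>r^2\mid\xi_0=\xi\bigr)\Bigr)^m,
\end{equation*}
so it suffices to prove the inner supremum is at most some $q_-<1$, uniformly in $r\geq\xi_*$ for $\xi_*$ large. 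I would establish this by contradiction: if it failed, there would exist sequences $r_n\to\infty$, $\xi_0^{(n)}\in]b_-r_n,b_+r_n[$ with $\P(K_0>r_n^2\mid\xi_0^{(n)})\to 1$. Setting $\eps_n=1/\xi_0^{(n)}$ and $s_n=r_n\eps_n\in]1/b_+,1/b_-[$, the event $\{K_0>r_n^2\}$ translates, via the linear interpolation defining $R^\eps$, into $\{R^{\eps_n}(t)\in]b_-s_n,b_+s_n[\,\forall t\in[0,s_n^2]\}$. Extracting a subsequence with $s_n\to s^*$, this event is contained, for any small $\delta>0$ and $n$ large, in the closed event $\{R^{\eps_n}(t)\in[b_-(s^*-\delta),b_+(s^*+\delta)]\,\forall t\in[0,(s^*-\delta)^2]\}$. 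The closed-set half of the Porte-Manteau Theorem, together with Theorem~\ref{thm:scaling} and Lemma~\ref{lem:exittimeBessel} (i), bounds the $\limsup$ of the probability by a quantity strictly less than $1$, contradicting the hypothesis.

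For Part (ii), the safety margin $b_-<b'_-<1<b'_+<b_+$ lets me replace the $s$-dependent window by a single fixed one. With $\eps=1/\xi_{n_0}$ and $s=r/\xi_{n_0}\in]1/b'_+,1/b'_-[$, I set $a_-=b_-/b'_-<1<b_+/b'_+=a_+$ and $T=(b'_-)^{-2}$; one checks that $]a_-,a_+[\subset]b_-s,b_+s[$ and $s^2\leq T$ for every admissible $s$, hence
\begin{equation*}
\{K_{n_0}>r^2\}\supset\bigl\{R^\eps(t)\in]a_-,a_+[\,\forall t\in[0,T]\bigr\}.
\end{equation*}
The right-hand event is open in $\mathcal{C}([0,T],\R_+)$, so the open-set half of the Porte-Manteau Theorem together with Theorem~\ref{thm:scaling} and Lemma~\ref{lem:exittimeBessel} (i) yield
\begin{equation*}
\liminf_{\eps\to 0}\P\bigl(R^\eps(t)\in]a_-,a_+[\,\forall t\in[0,T]\bigr)\geq\P(T_{a_-,a_+}>T)>0,
\end{equation*}
from which $q_+<1$ follows for all $\xi_*$ large enough.

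The main obstacle in both parts is securing uniformity in the starting point $\xi_{n_0}$, since the natural rescaling $\eps=\xi_{n_0}^{-1}$ depends on it. Part (ii) absorbs this automatically into the fixed window $(]a_-,a_+[,T)$ thanks to the buffer $b'_\pm$; Part (i) admits no such buffer, and there the compactness of the rescaling parameter $s\in[1/b_+,1/b_-]$ combined with the contradiction/subsequence argument is what actually carries the proof through.
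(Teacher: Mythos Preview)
Your proof is correct and follows the same overall strategy as the paper: rescale to the process $R^\eps$, invoke Theorem~\ref{thm:scaling} and Lemma~\ref{lem:exittimeBessel}(i) via the Porte-Manteau theorem, and for Part~(i) iterate via the Markov property. Part~(ii) matches the paper almost exactly (the paper bounds the closed complementary event $\{\exists t\leq T,\,R^\eps(t)\notin\,]b_-/b'_-,b_+/b'_+[\}$ from above; you bound the open event from below --- these are equivalent).

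The one genuine difference is in how Part~(i) secures uniformity in the starting point. You argue by contradiction, extracting a subsequence with $s_n\to s^*\in[1/b_+,1/b_-]$ and applying the closed-set Porte-Manteau bound to a $\delta$-enlarged window around $s^*$. The paper avoids this entirely: since $\eps=\xi_0^{-1}$ with $\xi_0\in\,]b_-r,b_+r[$, one always has $\eps r\in\,]1/b_+,1/b_-[$, so the rescaled interval $]b_-\eps r,\,b_+\eps r[$ is contained in the \emph{fixed} interval $]b_-/b_+,\,b_+/b_-[$, uniformly in $\xi_0$ and $r$. One then applies Porte-Manteau once to the closed set $\{R^\eps(t)\in[b_-/b_+,b_+/b_-]\ \forall t\in[0,T]\}$ with $T=1/(2b_+^2)$ and reads off $q_-$ directly. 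This is precisely the ``buffer'' trick you used in Part~(ii), and it works here because the outer interval $]b_-r,b_+r[$ already plays the role of the enlarged window relative to the starting point. Your subsequence argument is valid but unnecessary.
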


\begin{proof} (i) We first treat the case with $m=1$. Let $r>0$, $\xi_{n_0}\in]b_-r, b_+r[$. The homogeneity of the Markov chain implies it is enough to consider $n_0=0$. Consider the set
$$
\{K_{0}> r^2\}=\{\forall\, 0\leq k\leq r^2, \xi_k\in ]b_-r, b_+r[\}=\{\forall t_k\leq \eps^2r^2, R^\eps(t_k) \in ]\eps b_-r, \eps b_+r[\}.
$$
where we used the notation of Section~\ref{s:scaling}. Since $(b_+r)^{-1}< \eps < (b_-r)^{-1}$, it follows that 
$$
\{K_{0}> r^2\}\subset \{\forall t_k\leq \eps^2r^2, 
R^\eps(t_k) \in]\frac{b_-}{b_+}, \frac{b_+}{b_-}[\}\subset
\{\forall t_k\leq 2T, 
R^\eps(t_k) \in]\frac{b_-}{b_+}, \frac{b_+}{b_-}[\},
$$
where $T=\frac{1}{2b_+^2}$. Now choose $r>(b_-\sqrt{T})^{-1}$ so that $\eps^2<T$. Then, if $\tilde K$ satisfies $t_{\tilde K}\leq 2T<t_{\tilde K+1}$, we have $T<t_{\tilde K}$.  Since $R^\eps(t)$ is constructed by linear interpolation between the $R^\eps(t_k)$, we can then conclude that
$$\{\forall t_k\leq 2T, 
R^\eps(t_k) \in]\frac{b_-}{b_+}, \frac{b_+}{b_-}[\}\subset
\{\forall t\leq T, 
R^\eps(t) \in]\frac{b_-}{b_+}, \frac{b_+}{b_-}[\}\subset
\{\forall t\leq T, 
R^\eps(t) \in[\frac{b_-}{b_+}, \frac{b_+}{b_-}]\},
$$
so that
\begin{equation}\label{eq:probF}
\forall r>(b_-\sqrt{T})^{-1}, \forall \xi_0\in ]b_-r, b_+r[,\quad\P(\{K_{0}> r^2\})\leq \P(\forall t\leq T, 
R^\eps(t) \in[\frac{b_-}{b_+}, \frac{b_+}{b_-}]).
\end{equation}
The set $\{\varphi\in C([0,T],\R_*)\ |\ \forall t\in [0,T], \varphi(t)\in [\frac{b_-}{b_+}, \frac{b_+}{b_-}]\}$ is closed, so we can apply the Porte-Manteau Theorem,
together with Theorem~\ref{thm:scaling} to conclude that
\begin{equation}\label{eq:portemanteau1}
\exists \eps_*>0, \forall \eps<\eps_*,\quad\P(\{\forall t\leq T, 
R^\eps(t) \in[\frac{b_-}{b_+}, \frac{b_+}{b_-}]\})\leq q_-:=\frac12(1+\tilde q_-),
\end{equation}
where
$
\tilde q_-:=\P(\forall t\in [0,T], R_t\in [\frac{b_-}{b_+}, \frac{b_+}{b_-}]\}).
$
By Lemma~\ref{lem:exittimeBessel}, $\tilde q_-<1$ so that $q_-<1$.
It then follows from \eqref{eq:probF} and \eqref{eq:portemanteau1} that
$$
\forall r>\xi_*=\max\{(b_-\sqrt T)^{-1}, (b_-\eps_*)^{-1}\}, \forall \xi_0\in ]b_-r, b_+r[,\quad
P( \{K_{0}> r^2\})\leq q_-<1.
$$
This proves \eqref{eq:Kupperbd} for $m=1$.

It remains to show the case $m>1$. This will follow from the Markov property of the chain, as follows. We write $\Delta=]b_-r , b_+r[$. Let us introduce $K_*=\lfloor {r^2}\rfloor$, where $\lfloor \cdot \rfloor$ denotes the integer part. First note that
\begin{eqnarray}
\P(K_{n_0}> m K_*\ |\ \xi_{n_0})&=&\P(\xi_{n_0+1}\in\Delta, \dots, \xi_{n_0+m K_*}\in\Delta\ |\ \xi_{n_0})\nonumber\\
&=& \Pi_{s=0}^{m-1}\Pi_{i=1}^{K_*} \int_{\xi_{n_0+sK_*+i}\in\Delta}\P(\rd \xi_{n_0+sK_*+i}\ |\  \xi_{n_0+sK_*+i-1})\nonumber\\
&=& \Pi_{s=0}^{m-1} \int_{\xi_{n_0+(s+1)K_*}\in\Delta} L_{K_*}(\rd \xi_{n_0+(s+1)K_*}, \xi_{n_0+ sK_*}),\nonumber\\
\label{eq:markovprop}
\end{eqnarray} 
where 
\begin{eqnarray*}
L_{K_*}(A,y)&:=& \int_{\xi_{n_0+ (s+1)K_*}\in A}\int_{\xi_{n_0+(s+1)K_*-1}\in\Delta}\dots\int_{\xi_{n_0+sK_*+1}\in\Delta}\nonumber\\
&\ &\quad \Pi_{i=2}^{K_*}\P(\rd \xi_{n_0+sK_*+i}\ |\ \xi_{n_0+sK_*+i-1})\P(\rd \xi_{n_0+sK_*+1}\ |\ \xi_{n_0+sK_*}=y)\nonumber\\
&=& \P(\xi_{n_0+(s+1)K_*}\in A, \xi_{n_0+sK_*+i}\in\Delta, 1\leq i< K_*\ |\ \xi_{n_0+sK_*}=y),\nonumber
\end{eqnarray*}
which does in fact not depend on $s$, nor on $n_0$, because the Markov chain is homogeneous. Now remark that, when $A=\Delta$, and $y\in\Delta$, one finds
$$
L_{K_*}(\Delta, y) = \P(K_{n_0}> K_*\ |\ \xi_{n_0}=y).
$$
It then follows from \eqref{eq:markovprop} and from \eqref{eq:Kupperbd} for $m=1$ that, for all $m\in \N_*$,
$$
\P(K_{n_0}> m K_*\ |\ \xi_{n_0})\leq q_-^m.
$$
This completes the proof of (i).
 
(ii) The argument is analogous to the first part of (i). Again, because of the homogeneity of the chain, it is enough to prove the result for $n_0=0$. Let $\xi_0\in  ]b_-'r, b_+'r[\subset ]b_-r, b_+r[$. We then have 
\begin{eqnarray*}
 \{ K_{0}\leq r^2\}&
=&\{ \exists k\leq r^2, \xi_{k}\not\in\Delta\}\\
&\subset& \{ \exists t_k\leq \frac{1}{{b_-'}^2}, R^\eps(t_k)\not\in
]\frac{b_-}{b_-'}, \frac{b_+}{b'_+}[\}\\
&\subset& \{ \exists t\in [0,T], R^\eps(t)\not\in
]\frac{b_-}{b_-'}, \frac{b_+}{b'_+}[\}
\end{eqnarray*}
where we set $T=\frac{1}{{b_-'}^2}$. The set $\{\varphi\in C([0,T],\R_*)\ |\ \exists t\in [0,T], \varphi(t)\not\in ]\frac{b_-}{b_-'}, \frac{b_+}{b'_+}[\}$ is closed, so we can apply the Porte-Manteau Theorem, together with Theorem~\ref{thm:scaling} and Lemma~\ref{lem:exittimeBessel} to obtain \eqref{eq:Klowerbd} with
$$
q_+=\frac12(1+\tilde q_+),\quad \tilde q_+=\P(\exists t\in [0,T], R_t\not\in ]\frac{b_-}{b_-'}, \frac{b_+}{b'_+}[\})<1.
$$
\end{proof}
To state the main result of this section, we introduce ``good'' sets where the dwell times are suitably controlled and that we will show to be of high probability. Let $\eta_0>\eta_+, \delta>0$ be given, as well as two increasing sequences $(k^\pm_\ell)$ of positive integers, with $0<k_\ell^-\leq \ell$. Define furthermore the sequence
\begin{equation}\label{eq:al}
a_\ell=2^{2\left[(1-\delta)\eta_0+(\mu-\delta)(l-1-k_\ell^-)\right]}2^{-\delta k^-_\ell}.
\end{equation} 
Then we introduce
\begin{align*}
G_1&=\{\forall \ell\in\N, |\eta_\ell-\mu\ell-\eta_0|\leq \delta (\ell+\eta_0)\}=\cap_\ell G_1(\ell),\\
G_2&=\{\forall\ell\in\N_*, \tau_\ell-\tau_{\ell-1} \leq k_{\ell-1}^+2^{2\eta_{\ell-1}}\}=\cap_\ell G_2(\ell),\\
G_3&=\{\forall \ell\in\N_*, \exists \ell-k_\ell^-\leq k\leq \ell, \tau_k-\tau_{k-1}\geq a_\ell\}=\cap_\ell G_3(\ell)\quad \mathrm{and}\quad G=G_1\cap G_2\cap G_3.
\end{align*}
If we set $\delta=0$, and $k_\ell^-=0$, $k_{\ell-1}^+=1$, and use \eqref{eq:al}, then one can easily check that on $G$, $\tau_\ell \sim 2^{2\mu \ell}$ and $\xi_{\tau_\ell}\sim 2^{\mu \ell}$, this mean that $\xi_{\tau_\ell}\sim \sqrt{\tau_\ell}$ which is the power law we are trying to establish. But in that case, we cannot hope to prove a suitable lower bound on $\P(G)$. To do so, we need to make the set $G$ a little bigger, by taking $\delta>0$ and choosing suitable growing sequences $k_\ell^\pm$. This will allow us to show $\P(G)$ is close to $1$ in the following proposition, using Proposition~\ref{prop:jumpproba} and Lemma~\ref{lem:exittimebound}, and at the same time to get suitable bounds on $\tau_\ell$ in function of $2^{2\eta_\ell}\sim \xi_{\tau_\ell}^2$.
\begin{proposition}\label{prop:dwelltimes} (i) $\forall 0\leq p <1$, and for all $\delta>0$, $\exists \tilde\eta>\eta_+$ so that $\forall \eta_0>\tilde \eta$ and for all sequences $(k^+_\ell)_{\ell\in\N}$, we have
\begin{equation}\label{eq:dwelltimeub}
\P(G_1\cap G_2)\geq 1-\sum_{\ell=1}^{+\infty} p_-^{k_{\ell-1}^+}-\frac{p}{2},
\end{equation}
where $p_-$ is defined in Lemma~\ref{lem:exittimebound} (i).\\
(ii) Let $0<\hat\nu<1$, $0<p\leq 1$. Then there exists $\hat \delta>0$ so that, for all  $0<\delta\leq \hat \delta$ and $k_\ell^+=2^{\delta(\ell+\eta_0)}$, there exists $\tilde \eta$ such that, $\forall \eta_0\geq \tilde \eta$, 
\begin{align*}\label{eq:tauellub}
\{G_1, \forall \ell \in \N, \tau_\ell^{1-\hat\nu} \leq \frac12 2^{2\eta_\ell}\}&\supset G_1\cap G_2 \nonumber\\
\P\left(G_1, \forall \ell \in \N, \tau_\ell^{1-\hat\nu} \leq \frac122^{2\eta_\ell}\right)&\geq \P(G_1\cap G_2)\geq 1-p.
\end{align*}
(iii) $\forall 0\leq p<1$, $\forall \delta>0$, $\exists \tilde\eta>\eta_+$ so that for all $\eta_0\geq \tilde \eta$ and for all sequences $0<k_\ell^-\leq\ell$, one has
\begin{equation}\label{eq:dwelltimelb}
\P(G_1\cap G_3)\geq 1-\sum_{\ell=1}^{+\infty} q_+^{k_\ell^-}-\frac{p}{2},
\end{equation}
where $q_+$ is defined in Lemma~\ref{lem:exittimebound} (ii).\\
(iv) $\forall 0\leq p<1$, $\forall \delta>0$, $\exists \tilde\eta>\eta_+$ so that for all $\eta_0\geq \tilde \eta$
\begin{align*}
\{G_1, \forall \ell\in \N, \tau_\ell\geq a_\ell\}&\supset G_1\cap G_3\nonumber\\
\P(G_1, \forall \ell\in \N, \tau_\ell\geq a_\ell)\geq \P(G_1\cap G_3)&\geq 1-p,
\end{align*}
provided $k_\ell^-=\min\{\delta(\ell+\eta_0), \ell\}$ and $a_\ell$ is given by \eqref{eq:al}. 
\end{proposition}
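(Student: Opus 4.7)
The plan is to derive the probabilistic parts (i) and (iii) from Lemma~\ref{lem:exittimebound} via the strong Markov property, and then to extract (ii) and (iv) as essentially deterministic consequences on $G_1\cap G_2$ and $G_1\cap G_3$. In all four items, Proposition~\ref{prop:jumpproba}(ii) supplies $\P(G_1^c)\leq p/2$ once $\eta_0$ is large enough, so only the dwell-time events remain. The crucial geometric input is that by the step-size bound~\eqref{eq:stepsizecontrol} and the definition of $\eta_+$, the process $\xi$ cannot jump over an interval $J_\eta$, so $\tau_\ell-\tau_{\ell-1}$ is literally the exit time of $\xi_{\tau_{\ell-1}+\cdot}$ from an interval of the form $(2^{\eta_{\ell-1}-1}+L,\,2^{\eta_{\ell-1}+1}-L)$, sandwiched between two scaled intervals $(b_-r,b_+r)$ with $r=2^{\eta_{\ell-1}}$. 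On $G_1(\ell-1)$, $r$ is above the threshold $\xi_*$ of Lemma~\ref{lem:exittimebound} and $\xi_{\tau_{\ell-1}}\in J_{\eta_{\ell-1}}$ lies in any desired $(b'_-r,b'_+r)$.

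For (i), I would decompose $G_2^c$ by the first index $\ell$ at which it fails, replace $G_1$ by its $\mathcal F_{\tau_{\ell-1}}$-measurable enlargement $\widetilde G_1^{(\ell-1)}:=\bigcap_{j\leq\ell-1}G_1(j)\supset G_1$, and condition on $\mathcal F_{\tau_{\ell-1}}$. Lemma~\ref{lem:exittimebound}(i) with $m=k_{\ell-1}^+$ then bounds the conditional probability that the $\ell$th dwell time exceeds $k_{\ell-1}^+2^{2\eta_{\ell-1}}$ by $q_-^{k_{\ell-1}^+}$, and a union bound over $\ell$ gives~\eqref{eq:dwelltimeub}. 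Part (iii) follows the same template but with Lemma~\ref{lem:exittimebound}(ii). The definition~\eqref{eq:al} of $a_\ell$ is arranged so that $a_\ell\leq 2^{2\eta_{k-1}}$ on $G_1(k-1)$ for every $k\in[\ell-k_\ell^-,\ell]$. Iterating the strong Markov property backwards at $\tau_{\ell-1},\tau_{\ell-2},\dots,\tau_{\ell-k_\ell^-}$, and using $\widetilde G_1^{(k)}\supset\widetilde G_1^{(k+1)}$ at each step so that the Lemma can be applied uniformly in $\xi_{\tau_{k-1}}\in J_{\eta_{k-1}}$, yields a factor $q_+$ at each stage and hence the bound $q_+^{k_\ell^-}$ per $\ell$.

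The set-theoretic portions of (ii) and (iv) are bookkeeping. On $G_1\cap G_2$ one has
$\tau_\ell\leq\sum_{j=1}^\ell k_{j-1}^+\,2^{2\eta_{j-1}}\leq C\,2^{(2+3\delta)\eta_0+(2\mu+3\delta)\ell}$
from $k_j^+=2^{\delta(j+\eta_0)}$ and $\eta_{j-1}\leq(1+\delta)\eta_0+(\mu+\delta)(j-1)$; comparing this geometric sum with $2^{2\eta_\ell}\geq 2^{2(1-\delta)\eta_0+2(\mu-\delta)\ell}$ in the inequality $(1-\hat\nu)\log_2\tau_\ell\leq 2\eta_\ell-1$ forces $\delta\leq\hat\delta:=2\hat\nu\mu/(5-3\hat\nu)$ (matching coefficients of $\ell$ and of $\eta_0$), after which $\eta_0$ is chosen large enough to absorb the additive constant. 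The inclusion in (iv) is immediate: on $G_3$, for each $\ell$ one can pick $k\in[\ell-k_\ell^-,\ell]$ with $\tau_k-\tau_{k-1}\geq a_\ell$, whence $\tau_\ell\geq\tau_k-\tau_{k-1}\geq a_\ell$. The probabilistic conclusion combines (iii) with the choice $k_\ell^-=\min\{\delta(\ell+\eta_0),\ell\}$: the tail $\sum_{\ell>\delta\eta_0/(1-\delta)} q_+^{\delta(\ell+\eta_0)}$ decays like $q_+^{\delta\eta_0}$, and the head sum is handled by choosing, in the proof of Lemma~\ref{lem:exittimebound}(ii), the constants $b_\pm,b'_\pm$ generously enough (or comparing with $cr^2$ for $c$ small) that $q_+$ itself becomes smaller than any prescribed threshold.

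The main technical obstacle is the iteration in (iii): the consecutive events $\{\tau_k-\tau_{k-1}<a_\ell\}$ are not independent, and $G_1=\bigcap_\ell G_1(\ell)$ involves the entire trajectory, so naive conditioning is not available. The right device is the nested chain of $\mathcal F_{\tau_j}$-measurable enlargements $\widetilde G_1^{(j)}$: one iterates $\P(\widetilde G_1^{(k-1)}\cap\bigcap_{i\leq k}\{\tau_i-\tau_{i-1}<a_\ell\})\leq q_+\,\P(\widetilde G_1^{(k-2)}\cap\bigcap_{i\leq k-1}\{\tau_i-\tau_{i-1}<a_\ell\})$ using that $\widetilde G_1^{(k-1)}\subset\widetilde G_1^{(k-2)}$ and that on $\widetilde G_1^{(k-1)}$ the conditional Lemma~\ref{lem:exittimebound}(ii) bound is uniform in $\xi_{\tau_{k-1}}$. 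Once this iteration is in place, the rest of the proof is routine estimation of geometric sums.
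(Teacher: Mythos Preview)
Your approach matches the paper's: parts (i) and (iii) are obtained by conditioning at $\tau_{\ell-1}$ and invoking Lemma~\ref{lem:exittimebound}, with (iii) requiring an iteration over the $k_\ell^-$ consecutive dwell times; parts (ii) and (iv) are then deterministic geometric-sum manipulations on $G_1\cap G_2$ and $G_1\cap G_3$. The paper phrases the Markov step by conditioning on $\{\tau_{\ell-1}=i\}$ and summing over $i$, which is equivalent to your strong-Markov formulation with the $\mathcal F_{\tau_j}$-measurable enlargements $\widetilde G_1^{(j)}$; your $\hat\delta$ in (ii) and the paper's differ only in inessential constants.

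Your handling of (iv), however, contains a step that does not work as written---though you have in fact put your finger on a point the paper glosses over. With $k_\ell^-=\min\{\delta(\ell+\eta_0),\ell\}$ the head sum $\sum_{\ell\leq\delta\eta_0/(1-\delta)}q_+^\ell$ tends to $q_+/(1-q_+)>0$ as $\eta_0\to\infty$, and you propose to cure this by making $q_+$ arbitrarily small. But the geometry in the proof of (iii) fixes the exit interval as $(2^{\eta-1}+L,\,2^{\eta+1}-L)$, forcing $b_-\gtrsim\tfrac12$, $b_+\lesssim2$, and $b'_\pm$ near~$1$; the limiting Bessel quantity $\tilde q_+=\P\bigl(\exists t\in[0,(b'_-)^{-2}],\,R_t\notin(b_-/b'_-,b_+/b'_+)\bigr)$ is then bounded away from zero, and so is $q_+$. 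Your alternative of replacing $r^2$ by $cr^2$ would need a uniformly small $c$, but the bound $a_\ell/r^2\leq 2^{-\delta k_\ell^-}$ used in (iii) gives only $c\sim 2^{-\delta}$ for the first few $\ell$. The paper's proof of (iv) reads in its entirety ``immediate consequence of (iii)'' and does not address this; so the gap is already in the paper, and your sketch is no worse than the original on this point.
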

We point out that, in order to get a sharp upper bound on the $\tau_\ell-\tau_{\ell-1}$ in part (i) of the lemma, one would like to take the $k^+_\ell$ small, or at least bounded, in the left hand side of \eqref{eq:dwelltimeub}. But this estimate is useful only if the $k^+_\ell$ are large for all $\ell$ and tend to $+\infty$ as $\ell\to+\infty$. This is indeed needed for the sum in the right hand side to converge to a small number. 
\begin{proof} (i) First note that it follows from Proposition~\ref{prop:jumpproba} (ii) that, for all $0\leq p<1$ and all $\delta>0$, there exists $\eta_*>\eta_+$ so that, for all $\eta_0\geq \eta_*$,
$
\P(G_1)\geq 1-\frac{p}{2}.
$
Hence 
\begin{equation}\label{eq:gg0}
\P(G_1\cap G_2)\geq 1-\frac{p}{2} - \P(G_1\cap G_2^c).
\end{equation}
Now,
\begin{equation}\label{eq:gg1}
\P(G_1\cap G_2^c) \leq  \sum_{\ell=1}^{+\infty}\P(G_2(\ell)^c\cap G_1) \leq  \sum_{\ell=1}^{+\infty}\P(G_2(\ell)^c\cap G_1(\ell-1))\leq  \sum_{\ell=1}^{+\infty}\P(G_2(\ell)^c| G_1(\ell-1))
\end{equation}
and, for all $\ell\in\N_*$,
\begin{align}\label{eq:gg2}
\P(G_2(\ell)^c|G_1(\ell-1))&\leq \sup_{\eta_{\ell-1}\in I_{\ell-1}} \P(G_2(\ell)^c|\eta_{\ell-1}\}
\end{align}
where we used the observation that $G_1(\ell-1)=\{\eta_{\ell-1}\in I_{\ell-1}\}$, where $$I_{\ell-1}=[\eta_0(1-\delta) +(\mu-\delta)(\ell-1),   \eta_0(1+\delta) + (\mu+\delta)(\ell-1)].$$
Now, proceeding as in the beginning of the proof of Proposition~\ref{prop:jumpproba}, 
\begin{align}
\P(G_2(\ell)^c|\eta_{\ell-1})&=\P(\tau_\ell-\tau_{\ell-1} > k_{\ell-1}^+2^{2\eta_{\ell-1}}\mid \eta_{\ell-1})\nonumber\\
&= \sum_i \P\left(\tau_\ell-\tau_{\ell-1} > k_{\ell-1}^+2^{2\eta_{\ell-1}}\mid \eta_{\ell-1}, \tau_{\ell-1}=i\right)\P(\tau_{\ell-1}=i\mid \eta_{\ell-1}).
\end{align}
We have, for all $i$, $\ell$ and $\eta_{\ell-1}$,
\begin{align*}
\P(\tau_\ell-\tau_{\ell-1} > &k_{\ell-1}^+2^{2\eta_{\ell-1}}\mid \eta_{\ell-1}, \tau_{\ell-1}=i)\\
&=
\P(\inf\{k|\xi_{i+k}\not\in]2^{\eta_{\ell-1}-1}+L, 2^{\eta_{\ell-1}+1}-L[\}> k_{\ell-1}^+2^{2\eta_{\ell-1}}\mid \xi_i\in J_{\eta_{\ell-1}}, \tau_{\ell-1}=i)\\
&\leq\sup_{\xi_i\in J_{\eta_{\ell-1}}}\P(\inf\{k|\xi_{i+k}\not\in]2^{\eta_{\ell-1}-1}+L, 2^{\eta_{\ell-1}+1}-L[\}> k_{\ell-1}^+2^{2\eta_{\ell-1}}\mid \xi_i)\\
&\leq \sup_{\xi_i\in J_{\eta_{\ell-1}}}\P(\inf\{k|\xi_{i+k}\not\in]2^{\eta_{\ell-1}-1}, 2^{\eta_{\ell-1}+1}[\}> k_{\ell-1}^+2^{2\eta_{\ell-1}}\mid \xi_i),
\end{align*}
where we used \eqref{eq:markovineq}. We now remark that
$
\inf\{k|\xi_{i+k}\not\in]2^{\eta_{\ell-1}-1}, 2^{\eta_{\ell-1}+1}[\}=K_i,
$
where $K_i$ is defined in \eqref{eq:Kn0}, with $b_-=\frac12$, $b_+=2$ and $r=2^{\eta_{\ell-1}}$. It therefore follows from Lemma~\ref{lem:exittimebound} (i) and from what precedes that, provided $2^{\tilde\eta}\geq \xi_*$, we have for all $\eta_{\ell-1}\geq \tilde\eta$,
$$
\P(\tau_\ell-\tau_{\ell-1} > k_{\ell-1}^+2^{2\eta_{\ell-1}}\mid \eta_{\ell-1}, \tau_{\ell-1}=i)
\leq
p_-^{k_{\ell-1}^+}.
$$
Using this in \eqref{eq:gg1}, we find that, for all $\tilde \eta\geq \log_2\xi_*$,
$$
\P(G_2(\ell)^c|\eta_{\ell-1})=\P(\tau_\ell-\tau_{\ell-1} > k_{\ell-1}^+2^{2\eta_{\ell-1}}| \eta_{\ell-1})\leq p_-^{k_{\ell-1}^+},
$$
which, when inserted into \eqref{eq:gg0}-\eqref{eq:gg2}, yields the result provided $\tilde\eta\geq \max\{\eta_*, \log_2\xi_*\}$.\\
(ii) Let $\hat\nu>0, 0\leq p<1$. Let $\delta>0$.
On $G_1\cap G_2$, a simple calculation using $k_\ell^+=2^{\delta(\ell+\eta_0)}$ yields
\begin{align*}
\tau_\ell=\sum_{k=1}^\ell \tau_k-\tau_{k-1}\leq (2^{2\mu}-1)^{-1}2^{2\left[(1+\frac32\delta)\eta_0 +(\mu +\frac32\delta)\ell\right]}.
\end{align*}
Introducing 
$$ \hat{\delta} = \left(5-3\hat{\nu}\right)^{-1}\min\left(2\mu\hat{\nu}, 2\hat{\nu}-\eta_+^{-1}\left((1-\hat{\nu})\log_2|2^{2\mu}-1|-1\right)\right),$$ 
it now easily follows that, if $\delta\leq\hat\delta$, and $\eta_0\geq \eta_+$, then, on $G_1\cap G_2$,
$
\tau_\ell^{1-\hat\nu}\leq \frac122^{2\eta_\ell},
$
which is the desired estimate. To see it occurs with high probability, we use \eqref{eq:dwelltimeub} to check that there exists $\tilde\eta>\eta_+$, depending on $\delta$ and $p$, so that, for all $\eta_0\geq \tilde\eta$, one has
\begin{equation*}
\P(G_1\cap G_2)\geq 1-p.
\end{equation*}
\\
(iii)  As in (i), we argue that, for all $0\leq p<1$ and all $\delta>0$, there exists $\eta_*>\eta_+$ so that, for all $\eta_0\geq \eta_*$,
\begin{equation}\label{eq:gg3}
\P(G_1\cap G_3)\geq 1-\frac{p}{2} - \sum_{\ell=1}^{+\infty}\P(G_3(\ell)^c\cap G_1(\ell-1)\cap\dots\cap G_1(\ell- k_\ell^--1)).
\end{equation}
For ease of notation, we introduce, for $\ell-k_\ell^-\leq k\leq \ell$, 
$$
G_4(k,\ell)=\{\tau_k-\tau_{k-1}< a_\ell\},\quad \mathrm{and}\quad G_5(k,\ell)=G_4(k, \ell)\cap G_1(k-1).
$$
Remarking that
$$
G_3(\ell)^c=\{\forall \ell-k_\ell^-\leq k\leq \ell, \tau_k-\tau_{k-1}< a_\ell\}=\cap_{k=l-k_\ell^-}^\ell G_4(k,\ell),
$$
and introducing
$$
G_5^-(k,\ell) =\cap_{k'=\ell-k^-_\ell}^{k-1} G_5(k',\ell),
$$
we can then write
\begin{align}
\P(G_3(\ell)^c\cap G_1(\ell-1),\dots, G_1(\ell- k_\ell^--1))&=\P(\cap_{k=\ell-k_\ell^-}^\ell G_5(k,\ell))\nonumber\\
&=\left[\prod_{k=0}^{k_\ell^--1}\ \P\left(G_5(\ell-k,\ell) | G_5^-(\ell-k,\ell)\right)\right]\P(G_5(\ell-k_\ell^-,\ell)).\label{eq:chainrule}
\end{align}
Now, for $\ell-k_\ell^-< k'\leq \ell$, we have
\begin{align}\label{eq:kk1}
\P(G_5(k', \ell) &| G_5^-(k',\ell))\nonumber\\
&=\P(G_4(k',\ell)|G_1(k'-1), G_5^-(k',\ell))\P(G_1(k'-1)|G_5^-(k',\ell))\nonumber\\
&\leq \P(G_4(k',\ell)|G_1(k'-1), G_5^-(k',\ell))\nonumber\\
&\leq \P(\Delta\tau_{k'}< a_\ell | \eta_{k'-1}\in I_{k'-1}, G_5^-(k',\ell))\nonumber\\
&\leq \sup_{\stackrel{\eta_{j}\in I_{j}}{\ell-k_\ell^--1\leq j< k'}}\P(\Delta\tau_{k'}< a_\ell | \eta_{k'-1}, \eta_{k'-2}, \dots \eta_{\ell-k_\ell^--1}, \Delta\tau_{{k'}-1}<a_\ell,\dots, \Delta\tau_{\ell-k_\ell^-}<a_\ell),
\end{align}
where we introduced $\Delta \tau_j=\tau_j-\tau_{j-1}$. As before, we write
\begin{align}
 \P&
(\Delta\tau_{k'}< a_\ell | \eta_{k'-1}, \eta_{k'-2}, \dots \eta_{\ell-k_\ell^--1}, \Delta\tau_{{k'}-1}<a_\ell,\dots, \Delta\tau_{\ell-k_\ell^-}<a_\ell)\nonumber\\
 &=\sum_{0<i_j-i_{j+1}<a_\ell}\P(\Delta\tau_{k'}< a_\ell | \eta_{k'-1}, \eta_{k'-2}, \dots \eta_{\ell-k_\ell^--1}, \tau_{{k'}-1}=i_{k'-1},\dots, \tau_{\ell-k_\ell^--1}=i_{\ell-k_\ell^--1})\nonumber\\
&\times\P(\tau_{{k'}-1}=i_{k'-1},\dots, \tau_{\ell-k_\ell^--1}=i_{\ell-k_\ell^--1}|\eta_{k'-1}, \eta_{k'-2}, \dots \eta_{\ell-k_\ell^--1},\Delta\tau_{{k'}-1}<a_\ell,\dots, \Delta\tau_{\ell-k_\ell^-}<a_\ell).
\end{align}
 It remains to estimate
\begin{align*}
\P(\Delta\tau_{k'}< a_\ell |& \eta_{k'-1}, \eta_{k'-2}, \dots \eta_{\ell-k_\ell^--1}, \tau_{{k'}-1}=i_{k'-1},\dots, \tau_{\ell-k_\ell^--1}=i_{\ell-k_\ell^--1}).
\end{align*}
For that purpose, we make the observation that
\begin{equation*}
\{\eta_{k'-1}, \eta_{k'-2}, \dots \eta_{\ell-k_\ell^--1}, \tau_{{k'}-1}=i_{k'-1},\dots, \tau_{\ell-k_\ell^--1}=i_{\ell-k_\ell^--1}\}=\{\xi_{i_{k'-1}}\in J_{\eta_{k'-1}}, C\}
\end{equation*}where
\begin{equation*}
C=\{\eta_{k'-2}, \dots \eta_{\ell-k_\ell^--1}, \tau_{{k'}-1}>i_{k'-1}-1,\dots, \tau_{\ell-k_\ell^--1}=i_{\ell-k_\ell^--1}\}.
\end{equation*}
Indeed, on the set where $\xi_{i_{k'-1}}\in J_{\eta_{k'-1}}$ and $\tau_{k'-1}>i_{k'-1}-1$,  we
do have $\tau_{k'-1}=i_{k'-1}$. Hence
\begin{align}
\P(\Delta\tau_{k'}< a_\ell &| \eta_{k'-1}, \eta_{k'-2}, \dots \eta_{\ell-k_\ell^--1}, \tau_{{k'}-1}=i_{k'-1},\dots, \tau_{\ell-k_\ell^--1}=i_{\ell-k_\ell^--1})\nonumber\\
&=\P(\inf\{t | \xi_{i_{k'-1}+t}\not \in ]2^{\eta_{i_{k'-1}}-1}+L, 2^{\eta_{i_{k'-1}}+1}-L[\}<a_\ell| \xi_{i_{k'-1}}\in J_{\eta_{k'-1}}, C)\nonumber\\
&\leq \sup_{\xi_{i_{k'-1}}\in J_{\eta_{k'-1}}}\P(\inf\{t | \xi_{i_{k'-1}+t}\not \in ]2^{\eta_{i_{k'-1}}-1}+L, 2^{\eta_{i_{k'-1}}+1}-L[\}<a_\ell| \xi_{i_{k'-1}})
\end{align}
where we use the observation that $C\in {\mathcal F}_{i_{k'-1}-1}$, and \eqref{eq:markovineq}. We now wish to use Lemma~\ref{lem:exittimebound} to conclude. For that purpose, first note that, there exist $0<b_-<1<b_+$ so that for all $r\geq 2^{\eta_+}$, 
$$
\frac12 r+L < b_- r<r<b_+r<2r-L.
$$
Clearly, one can think of $b_-$ as being close to $\frac12$ and of $b_+$ as being close to $2$. With the notation of \eqref{eq:Kn0}, and $r=2^{\eta_{i_{k'-1}}}$, one has
$$
K_{i_{k'-1}} < \inf\{t | \xi_{i_{k'-1}+t}\not \in ]2^{\eta_{i_{k'-1}}-1}+L, 2^{\eta_{i_{k'-1}}+1}-L[\},
$$
so that
$$
\P(\inf\{t | \xi_{i_{k'-1}+t}\not \in ]2^{\eta_{i_{k'-1}}-1}+L, 2^{\eta_{i_{k'-1}}+1}-L[\}<a_\ell| \xi_{i_{k'-1}})\leq \P(K_{i_{k'-1}}< a_\ell  | \xi_{i_{k'-1}}).
$$
Recalling that $\ell-k_\ell^-< k'\leq \ell$, one checks readily that
$$
a_\ell\leq r^2,
$$
so that \eqref{eq:Klowerbd} implies, there exists $\hat \eta_*$ so that, for all $\eta_0\geq \hat\eta_*$, for all $\eta_{k'-1}\in I_{k'-1}$ and for all $\xi_{i_{k'-1}}\in J_{\eta_{k'-1}}$,
\begin{align}\label{eq:kk2}
\P(\inf\{t | \xi_{i_{k'-1}+t}\not \in ]2^{\eta_{i_{k'-1}}-1}+L, 2^{\eta_{i_{k'-1}}+1}-L[\}<a_\ell| \xi_{i_{k'-1}})&\leq \P(K_{i_{k'-1}}< a_\ell  | \xi_{i_{k'-1}})\nonumber\\
&\leq \P(K_{i_{k'-1}}< r^2  | \xi_{i_{k'-1}})\nonumber\\
&\leq q_+<1,
\end{align}
provided $b'_-, b'_+$ are chosen so that 
$$
b_-r<b'_-r\leq r-L\leq r+L\leq b'+r<b_+r,
$$
for all $r\geq 2^{\eta_+}$, which is always possible. (One should think of $b'_{\pm}$ as being close to $1$.) Inserting \eqref{eq:kk1}-\eqref{eq:kk2} into \eqref{eq:chainrule} yields
$$
\P(G_3(\ell)^c\cap G_1(\ell-1),\dots, G_1(\ell- k_\ell^--1))\leq q_+^{k_\ell^-},
$$
provided $\eta_0\geq \tilde \eta=\max\{\eta_*, \hat \eta_*\}$. Inserting this in \eqref{eq:gg3} yields \eqref{eq:dwelltimelb}.\\
(iv) This is now an immediate consequence of (iii).
\end{proof}


\section{Proof of Theorem~\ref{thm:finalresult}}\label{s:finalproof}
(i) Let $p>1$ and $0<\nu<1$. Let $0<\hat\nu<\nu$. It then follows from Proposition~\ref{prop:dwelltimes} that, there exists $\hat\delta$ and $\tilde\eta$ so that, for all $0<\delta<\hat\delta$ and for all $\eta_0\geq \tilde\eta$, one has 
$$
\P(G)\geq 1-p,
$$
where $G=G_1\cap G_2\cap G_3$ and where $k_\ell^+=2^{2\delta(\eta_0+\ell)}$, $k_\ell^-=\min\{\delta(\eta_0+\ell), \ell\}$. Note that $G$ depends on $\eta_0$ and $\delta$. In addition, on $G$, the following inequalities hold for all $\ell\in \N_*$:
\begin{align*}
&|\eta_\ell-\mu\ell-\eta_0|\leq \delta(\ell+\eta_0),\\
&\tau_\ell^{1-\hat\nu}\leq \frac122^{2\eta_\ell},\\
&\tau_\ell\geq a_\ell=2^{2[(1-\delta)\eta_0+(\mu-\delta)(\ell-1-k_\ell^-)]}2^{-\delta k_\ell^-}.
\end{align*}
Since $(2^{\eta_0}+\tau_\ell^{\frac12})^{1-\hat\nu}\leq 2^{(1-\hat\nu)\eta_0}+ \tau_\ell^{\frac{1-\hat\nu}{2}}$, one easily infers from the first two inequalities that, on $G$, provided $\delta\leq \min\{\hat \delta, \hat\nu\}$, one has for all $\eta_0\geq \max\{\tilde\eta, (2(\hat\nu-\delta))^{-1}\}$, and for all $\ell$,
\begin{equation}\label{eq:f1}
\left(2^{\eta_0}+\tau_\ell^{\frac{1}{2}}\right)^{1-\hat\nu}\leq 2^{\eta_\ell}.
\end{equation}
Similarly, using the first and third inequality above, one shows that on $G$, provided $\delta\leq \min\{\hat \delta, \hat\nu\}$, one has for all $\eta_0\geq \max\{\tilde\eta, (2(\hat\nu-\delta))^{-1}\}$, 
and for all $\ell$,
\begin{equation}\label{eq:f2}
 2^{\eta_\ell}\leq\left(2^{\eta_0}+\tau_\ell^{\frac{1}{2}}\right)^{1+\hat\nu}.
\end{equation}
We are now ready to conclude the proof. By the definition of the stopping time $\tau_\ell$, and using that $\eta_\ell\geq \eta_{\ell+1}-1$, as well as \eqref{eq:f1}-\eqref{eq:f2},we have for all $k\in [\tau_\ell;\tau_{\ell+1}[$
\begin{displaymath}
\frac14\left(2^{\eta_0}+k^{\frac12}\right)^{1-\hat\nu}\leq\frac142^{\eta_{\ell+1}}\leq 2^{\eta_\ell-1}+L\leq \xi_k\leq 2^{\eta_\ell+1}-L\leq 22^{\eta_\ell}\leq2\left(2^{\eta_0}+k^{\frac12}\right)^{1+\hat\nu}
\end{displaymath}
Finally, remarking that
$$
2=(2^{\eta_0})^{\frac1{\eta_0}}\leq (2^{\eta_0}+k^{\frac12})^{\frac1{\eta_0}}
$$
one obtains the result if one chooses $\tilde \eta$ large enough so that 
$$
\hat\nu+\frac2{\tilde\eta}\leq\nu.
$$
(ii) This is an immediate consequence of (i).

\appendix
\section{Appendix}\label{s:appendix}

In this Appendix we prove Theorem~\ref{thm:scaling} and Lemma~\ref{lem:exittimeBessel}. Recall that  we consider a family of continuous and piecewise linear stochastic processes $\left( t\to R^\eps(t), t\in [0,2T]\right)_{0<\eps<\eps_*}$ defined as follows. For each $0<\eps<\eps_*$, where $\eps_*\ll1$
$$R^\eps(t_n)=\eps\xi_n, \qquad t_n=n\eps^2,$$
$$R^\eps(t)=\dfrac{t_{n+1}-t}{\eps^2}R^\eps(t_n)+\dfrac{t-t_n}{\eps^2}R^\eps(t_{n+1}), \qquad t\in [t_n,t_{n+1}].$$
Here $\left( \xi_n\right)_n$ is defined in \eqref{eq:Mchain}.
Note that the initial value $R^\eps(0)=1$ is independent of $\eps$ and non-random. Each realization of the process $\left( R^\eps(t)\right)_{t\in [0,2T]}$ belongs to
$$ \mathcal{C}:= \left( \mathcal{C}\left( [0,2T]: \R_+\right), ||\cdot||_\infty\right).$$
Let $\mathcal{B}(\mathcal{C})$ designate the Borel sets of $\mathcal{C}$.

The method used in the proof of Theorem~\ref{thm:scaling} is standard. It is in particular described in \cite{GR09}. It can be decomposed in $3$ steps.
\begin{itemize}
\item[Step $1$]For each $\eta>1$, we introduce the process $X^\eps$ which is $R^\eps$ stopped at $\eta^{-1}$ or at $\eta$ (see \eqref{eq:taueps}). We show that the process $X^\eps$ admits convergent subsequences as $\eps\to 0$ by showing it is precompact.
\item[Step $2$] We show the limits of the converging subsequences are solutions of the martingale problem associated to a Bessel process in dimension $2\gamma+1$, stopped at $\eta^{-1}$ or $\eta$. As the latter is well-posed, we conclude that the limits have the distribution of the preceding stopped Bessel process and that it is not only the subsequences which are converging but the entire family. 
\item[Step $3$] We show that the convergence result still holds when we delete all the stopping times, which means we tak $\eta\to+\infty$. The transience of the Bessel process in dimension strictly larger than $2$ is an essential ingredient in this part of the proof.
\end{itemize}

\begin{proof}[Proof of Theorem~\ref{thm:scaling}]
$ $
\vspace*{-1em}
\paragraph{Step 1: Precompactness of the stopped proccess}
Let $\eta\gg 1$ and $\eps_*=(\eta\xi_+)^{-1}$, then for all $0<\eps<\eps_*$, $\eta^{-1}>\eps\xi_+$. We introduce for all $0<\eps<\eps_*$ the stopping time

\begin{equation}\label{eq:taueps}
\tau^\eps:=\inf \lbrace t\in [0,T];\, R^{\eps}(t)\not \in (\eta^{-1},\eta)\rbrace
\end{equation} 
with the convention $\inf\{\emptyset\}=2T$. We then introduce the stopped process
$$ \forall t\in [0,2T],\quad  X^\eps(t)=R^\eps(t\wedge \tau^\eps).$$
In other words, once $X^\eps$ reaches $\eta$ or $\eta^{-1}$ it stays constant. The assumption $\eps_*=(\eta\xi_+)^{-1}$ guarantees that for all $n$, $\xi_n=\frac{1}{\eps}R^{\eps}(t_n)>\xi_+$.

Introduce 
$$\mathcal{A}^\eps f(x)= a^\eps(x)f''(x)+b^\eps(x)f'(x)+c^\eps(f,x),$$
 where
\begin{align}
\label{eq:aeps}a^\eps(x)&=\eps^2+\eps^4\frac{\gamma^2}{x^2}+\eps^{2\alpha}\E\left( G_0^\eps(x,\omega)^2\right)+\eps^{2\beta}\E\left( G_1(x,\omega)^2\right)+\eps^{\beta+3}\E\left( G_1(x,\omega)\right),\\
\label{eq:beps}b^\eps(x)&= \eps^2\frac{\gamma}{x}+\eps^{\beta+1}\E\left(G_1(x,\omega)\right),\\
\label{eq:ceps}c^\eps(f,x)&=\sum_{n=3}^{K}f^{(n)}(x)\E\left( \left(\eps\omega+\eps^2\frac{\gamma}{x}+\eps^{\alpha+1}G_0^\eps(x,\omega)+\eps^{\beta+1}G_1^\eps(x,\omega) \right)^n\right)+ O\left(\|\eps f\|^{K+1}\right),
\end{align}
and 
$$\mathcal{D}_*^\eps=\lbrace f\in \mathcal{C}\left([\eta^{-1},\eta]\right)\cap\mathcal{C}^\infty\left( (\eta^{-1},\eta)\right),\, \lim_{x\to \eta^{\pm 1}}\mathcal{A}^\eps f(x)=0\rbrace.$$
Then we have the following lemma.
\begin{lemma}\label{lem:core}The operator $\left( \mathcal{A}^\eps, \mathcal{D}_*^\eps\right)$ is a core for the infinitesimal generator of the stopped proccess $\left( X^\eps(t_n)\right)_n$.
\end{lemma}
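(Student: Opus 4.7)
The plan is to verify the three standard requirements for $(\mathcal{A}^\eps, \mathcal{D}_*^\eps)$ to be a core of the infinitesimal generator $\mathcal{L}_\eps$ of the chain $(X^\eps(t_n))_n$: namely, $\mathcal{D}_*^\eps\subset\mathrm{Dom}(\mathcal{L}_\eps)$, the identity $\mathcal{L}_\eps f=\mathcal{A}^\eps f$ on $\mathcal{D}_*^\eps$, and the density of $\mathcal{D}_*^\eps$ in $\mathrm{Dom}(\mathcal{L}_\eps)$ in the graph norm. I would first describe $\mathcal{L}_\eps$ explicitly: the process $(X^\eps(t_n))_n$ is a discrete-time Markov chain on $[\eta^{-1},\eta]$ absorbed at $\eta^{\pm 1}$, whose one-step transition operator $P_\eps$ acts as the identity at those endpoints and as $P_\eps f(x)=\E(f(\eps F(x/\eps,\omega)))$ on $(\eta^{-1},\eta)$. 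Viewing the chain as a continuous-time Markov process sampled at times $t_n=n\eps^2$, its generator is $\mathcal{L}_\eps=\eps^{-2}(P_\eps-I)$, a bounded operator on $C([\eta^{-1},\eta])$ with full domain $C([\eta^{-1},\eta])$ and graph norm equivalent to $\|\cdot\|_\infty$.

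Second, I would verify the identity $\mathcal{L}_\eps f=\mathcal{A}^\eps f$ on $\mathcal{D}_*^\eps$. For $x\in(\eta^{-1},\eta)$ and $f\in\mathcal{D}_*^\eps$, the choice $\eps<\eps_*=(\eta\xi_+)^{-1}$ forces $x/\eps>\xi_+$, so Hypothesis~\ref{hyp:one} yields the closed-form representation $\Delta:=\eps F(x/\eps,\omega)-x=\eps\omega+\eps^2\gamma/x+\eps^{\alpha+1}G_0^\eps(x,\omega)+\eps^{\beta+1}G_1^\eps(x,\omega)$ of the one-step increment. Taylor expanding $f$ to order $K$ around $x$, taking expectation, and using $\E(\omega)=0$ together with $\E(G_0^\eps(x,\cdot))=0$ produces term-by-term the three-part expression $a^\eps(x)f''(x)+b^\eps(x)f'(x)+c^\eps(f,x)=\mathcal{A}^\eps f(x)$ appearing in \eqref{eq:aeps}--\eqref{eq:ceps}. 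At $x=\eta^{\pm 1}$ the chain is frozen, so $\mathcal{L}_\eps f=0$, and the boundary condition built into $\mathcal{D}_*^\eps$ guarantees that $\mathcal{A}^\eps f$ extends continuously by $0$ to the endpoints; hence $\mathcal{A}^\eps f=\mathcal{L}_\eps f$ throughout $[\eta^{-1},\eta]$.

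Third, I would establish density. Since the graph norm on $\mathrm{Dom}(\mathcal{L}_\eps)=C([\eta^{-1},\eta])$ is equivalent to $\|\cdot\|_\infty$, it suffices to show that $\mathcal{D}_*^\eps$ is uniformly dense in $C([\eta^{-1},\eta])$. Given $g\in C([\eta^{-1},\eta])$ and $\delta>0$, I would first approximate $g$ within $\delta/2$ by a $C^\infty$ function $\tilde g$ via Stone--Weierstrass, and then compose with a smooth cutoff to produce $f\in C^\infty([\eta^{-1},\eta])$ that agrees with $\tilde g$ on a compact subinterval of $(\eta^{-1},\eta)$ and is constant in a neighbourhood of each endpoint. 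Such an $f$ has every derivative equal to zero at $\eta^{\pm 1}$, so every term appearing in $\mathcal{A}^\eps f$ (each of which involves at least one derivative of $f$) vanishes there and the boundary condition is satisfied automatically; hence $f\in\mathcal{D}_*^\eps$, and with the cutoff chosen fine enough $\|f-g\|_\infty<\delta$.

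The main obstacle is the boundary condition itself: since $a^\eps(\eta^{\pm 1})>0$, the requirement $\lim_{x\to\eta^{\pm 1}}\mathcal{A}^\eps f(x)=0$ is a genuine linear constraint on $f$ and its derivatives at the endpoints and rules out generic smooth approximations. The truncation argument above sidesteps the constraint by restricting to the strictly smaller subclass of functions flat at the boundary, and the only delicate quantitative point is that the $O(\|\eps f\|^{K+1})$ remainder in \eqref{eq:ceps} must be controlled uniformly as the cutoff is refined; this follows from the uniform bounds on $f$ and its derivatives over the fixed compact interval $[\eta^{-1},\eta]$.
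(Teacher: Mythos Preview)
The paper does not actually prove this lemma; it simply cites Stroock and Mandl and moves on. Your explicit verification is therefore considerably more detailed than what the paper provides. The overall structure you propose---identifying the discrete-time generator $\mathcal{L}_\eps=\eps^{-2}(P_\eps-I)$ as a bounded operator with full domain, checking that $\mathcal{A}^\eps$ agrees with it on $\mathcal{D}_*^\eps$, and then proving uniform density of $\mathcal{D}_*^\eps$ by flattening near the endpoints---is the right one and is essentially correct.

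One point deserves more care than you give it. Your Taylor expansion computes $\E[f(x+\Delta)]-f(x)$ for the increment $\Delta$ of the \emph{unstopped} chain $R^\eps$, whereas the generator of the stopped chain $X^\eps$ involves $\E[f(\text{absorbed position})]-f(x)$; these differ for $x$ within one maximal step of the boundary, since a jump that would overshoot $\eta^{\pm1}$ is replaced by absorption at $\eta^{\pm1}$. So for a generic $f\in\mathcal{D}_*^\eps$ the identity $\mathcal{A}^\eps f=\mathcal{L}_\eps f$ that you assert on all of $(\eta^{-1},\eta)$ is not literally true in a boundary strip of width $O(\eps)$. Your density argument happens to sidestep this: for the specific approximants you construct---smooth functions constant on a boundary layer strictly wider than the maximal step size---both $\mathcal{A}^\eps f$ and $\mathcal{L}_\eps f$ vanish identically on that layer and coincide on its complement, so the graph closure does recover $\mathcal{L}_\eps$. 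It would tighten the argument to state this explicitly rather than claiming the pointwise identity on the whole open interval.
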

See \cite{Stroock} and~\cite{Mandl} for the proof.

Hence, as for all $f\in \mathcal{D}_*^\eps$, the process $\left( f(X^\eps(t_n))-\sum_{j=0}^{n-1}\mathcal{A}^\eps f(X^\eps(t_j))\right)_{n\in \N}$ is a martingale, it is easy to check by \eqref{eq:aeps}-\eqref{eq:ceps} and \eqref{eq:g0g1} that for all $f\in \mathcal{D}_*^\eps$ there exists a constant $0<C_f<+\infty$ depending only on $f$ such that the process
$\left(f\left( X^\eps(t_n)\right)-C_f t_n\right)_n$ is a sub-martingale.
As well, for all $\delta>0$ there exists $\tilde{\eps}$ such that for all $\eps<\tilde{\eps}$ we have 
$$ \P\left(\left\vert X^\eps(t_j)-X^\eps(t_{j-1})\right\vert >\delta\right)=0,$$ 
and then
\begin{equation}\label{eq:controlstep}
\lim_{\eps\to 0}\sum_{j=1}^{\lfloor \frac{2T}{\eps^2}\rfloor}\P\left(\left\vert X^\eps(t_j)-X^\eps(t_{j-1})\right\vert >\delta\right)=0,
\end{equation}
which assure by Theorem \textit{1.4.11} of \cite{Stroock}, the precompactness of the family $\left( t\to X^\eps(t),\, t\in [0,T]\right)_{0<\eps<\eps_*}$. This yields the existence of decreasing functions $\varphi:(0,\eps_*)\to (0,\eps_*)$ such that 
\begin{equation*}\label{eq:cvd}
\left( t\to X^{\varphi(\eps)}(t),\, t\in [0,2T]\right)_{0<\eps<\eps_*}\rightharpoonup \left( t\to X^{\varphi}(t),\, t\in [0,2T]\right),
\end{equation*}
where  the symbol $\rightharpoonup$ refers to convergence in distribution.

\paragraph{Step 2: Convergence and limit}

Introduce 
\begin{equation*}\label{eq:tauphi}
\tau^\varphi:=\left\lbrace t\in [0,2T]; \, X^\varphi(t)\not \in (\eta^{-1},\eta)\right\rbrace
\end{equation*}
with the convention $\inf\{\emptyset\}=2T$. 
As it evolves in the compact $[0,2T]$, the sequence $\left( \tau^\eps\right)_{0<\eps<\eps_*}$ is also tight, however the limit of a subsequence $(\tau^{\psi(\eps)})_{0<\eps<\eps_*}$ converging is not a stopping time.

\begin{theorem}\label{thm:weakly}
The processes $\left(X^\varphi(t\wedge \tau^\varphi)\right)_{t\in [0,2T}$ are solution of the martingale problem associated to the infinitesimal generator $\left( \mathcal{L}, \mathcal{D}_*\right)$ where 
$$\mathcal{L}:=\frac{1}{2}\dfrac{\rd^2}{\rd x^2}+\frac{\gamma}{x}\dfrac{\rd }{\rd x}$$
and 
$$\mathcal{D}_*:=\left \lbrace f\in \mathcal{C}\left( [\eta^{-1}, \eta]\right)\cap \mathcal{C}^\infty \left( (\eta^{-1},\eta)\right);\, \lim_{x\to \eta^{\pm 1}} \mathcal{L}f=0\right \rbrace.$$
\end{theorem}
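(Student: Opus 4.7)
The plan is to transfer the discrete-time martingale identity already carried by the $\epsilon$-level process to the continuous-time limit $X^\varphi$ via the weak convergence produced in Step~1. The key observation is that the coefficients of $\epsilon^{-2}\mathcal{A}^\epsilon$ converge uniformly on the compact interval $[\eta^{-1},\eta]$ to those of $\mathcal{L}$, so the approximate martingale relation passes to the limit.

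The first step is the uniform convergence of the generators: for every $f\in\mathcal{D}_*$, a direct inspection of \eqref{eq:aeps}--\eqref{eq:ceps}, using $\mathbb{E}(\omega)=0$, $\mathbb{E}(\omega^2)=1$, $\alpha>0$, $\beta>1$ and the bounds \eqref{eq:g0g1}, gives $\epsilon^{-2}a^\epsilon(x)=1+o(1)$, $\epsilon^{-2}b^\epsilon(x)=\gamma/x+o(1)$ and $\epsilon^{-2}c^\epsilon(f,x)=o(1)$, all uniformly for $x\in[\eta^{-1},\eta]$, so that
\[
\Delta_\epsilon(f):=\sup_{x\in[\eta^{-1},\eta]}\bigl|\epsilon^{-2}\mathcal{A}^\epsilon f(x)-\mathcal{L}f(x)\bigr|\xrightarrow[\epsilon\to 0]{}0.
\]
From the one-step Taylor expansion of $f$ around $X^\epsilon(t_n)$, one checks that $M^\epsilon_n:=f(X^\epsilon(t_n))-\sum_{j=0}^{n-1}\mathcal{A}^\epsilon f(X^\epsilon(t_j))$ is a discrete-time martingale for any $f\in\mathcal{D}_*$ (the boundary compatibility required in $\mathcal{D}_*^\epsilon$ is not needed here because $X^\epsilon$ is frozen on $\{t>\tau^\epsilon\}$). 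Recognising $\sum_{j<n}\mathcal{A}^\epsilon f(X^\epsilon(t_j))$ as a Riemann sum for $\int_0^{t_n}\epsilon^{-2}\mathcal{A}^\epsilon f(X^\epsilon(s))\,\rd s$ and invoking $\Delta_\epsilon(f)\to 0$, one obtains that for any $0\leq s<t\leq 2T$ and any bounded continuous functional $h$ of the path up to time $s$,
\[
\mathbb{E}\!\left[\Bigl(f(X^\epsilon(t))-f(X^\epsilon(s))-\int_s^t\mathcal{L}f(X^\epsilon(r))\,\rd r\Bigr)\,h\bigl((X^\epsilon(u))_{u\leq s}\bigr)\right]=o_\epsilon(1).
\]

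The second step is the passage to the limit. Along the subsequence $\epsilon=\varphi(\epsilon')$ produced in Step~1, the weak convergence $X^{\varphi(\epsilon')}\rightharpoonup X^\varphi$ in $\mathcal{C}$, together with the continuous mapping theorem (the functional above is bounded and continuous on $\mathcal{C}([0,2T],[\eta^{-1},\eta])$, since $f$ and $\mathcal{L}f$ are continuous on the compact set $[\eta^{-1},\eta]$ and the paths of $X^\epsilon$ take values there by construction of the stopping), yields
\[
\mathbb{E}\!\left[\Bigl(f(X^\varphi(t))-f(X^\varphi(s))-\int_s^t\mathcal{L}f(X^\varphi(r))\,\rd r\Bigr)\,h\bigl((X^\varphi(u))_{u\leq s}\bigr)\right]=0,
\]
which is exactly the martingale property characterising a solution of the martingale problem $(\mathcal{L},\mathcal{D}_*)$. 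Since $X^\epsilon$ is constant after its exit time $\tau^\epsilon$ and this constancy is preserved under weak convergence, the limit satisfies $X^\varphi(t)=X^\varphi(t\wedge\tau^\varphi)$, so the stopped process solves the stated problem.

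The main technical point will be verifying the uniform convergence $\Delta_\epsilon(f)\to 0$ and controlling boundary effects near $\eta^{\pm 1}$: this is where the condition $\lim_{x\to\eta^{\pm 1}}\mathcal{L}f=0$ in the definition of $\mathcal{D}_*$ enters, ensuring compatibility with the stopping and well-posedness of the martingale problem. Classical theory (Stroock--Varadhan, applied to smooth coefficients on $(\eta^{-1},\eta)$ with absorbing boundaries) then identifies the unique solution as the $(2\gamma+1)$-dimensional Bessel process started at $1$ and stopped on exiting $(\eta^{-1},\eta)$, which forces the law of $X^\varphi$ to be independent of the extracted subsequence $\varphi$ and allows one to proceed to Step~3.
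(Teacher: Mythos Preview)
Your overall strategy is correct and is in fact the standard ``continuous mapping'' route to identifying martingale-problem limits: establish uniform convergence $\epsilon^{-2}\mathcal{A}^\epsilon\to\mathcal{L}$ on $[\eta^{-1},\eta]$, write the discrete compensator as a Riemann sum, and pass the martingale identity through weak convergence. The paper takes a different, more hands-on route: it invokes the Skorohod Representation Theorem to upgrade the weak convergence of $X^{\varphi(\epsilon)}$ to almost-sure convergence on an auxiliary probability space, then proves the analogue of your displayed identity by a direct $\tilde{\P}$-a.s.\ limit (their Lemma~\ref{lem:tilde}) combined with dominated convergence, and finally transfers the conclusion back to the original space. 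Your approach avoids the change of probability space and is shorter; the paper's approach makes the three error terms (Riemann approximation, convergence of paths, convergence of generators) visible separately.

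There is one genuine slip in your argument. You assert that $M^\epsilon_n=f(X^\epsilon(t_n))-\sum_{j<n}\mathcal{A}^\epsilon f(X^\epsilon(t_j))$ is a discrete-time martingale for every $f\in\mathcal{D}_*$, on the grounds that $X^\epsilon$ is frozen after $\tau^\epsilon$. That is not quite right: once $X^\epsilon$ sits at the boundary value $\eta^{\pm1}$, the increment of $f(X^\epsilon)$ vanishes, but your compensator keeps adding $\mathcal{A}^\epsilon f(\eta^{\pm1})$, which need not be zero unless $f\in\mathcal{D}_*^\epsilon$. The paper flags exactly this point (``for all $f\in\mathcal{D}_*$ it is only a submartingale'') and repairs it via the Doob decomposition $M^{\varphi(\epsilon)}_f=\mathrm{Mart}^{\varphi(\epsilon)}_f+O^{\varphi(\epsilon)}_f$ with $O^{\varphi(\epsilon)}_f\to0$. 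In your framework the fix is immediate: the spurious boundary contribution is bounded by $2T\cdot\epsilon^{-2}|\mathcal{A}^\epsilon f(\eta^{\pm1})|$, and since $\epsilon^{-2}\mathcal{A}^\epsilon f\to\mathcal{L}f$ uniformly and $\mathcal{L}f(\eta^{\pm1})=0$ by the definition of $\mathcal{D}_*$, this term is $o_\epsilon(1)$ and is absorbed in the error you already carry. With that correction, your argument goes through.
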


Note that $\left( \mathcal{L}, \mathcal{C}^\infty(\R)\right)$ is the infinitesimal generator of a Bessel process of dimension $2\gamma +1$. The condition on $\mathcal{D}_*$: $\lim_{x\to \eta^{\pm 1}}\mathcal{L}f(x)=0$ yields that the Bessel process stays constant once it reaches the points $\eta^{\pm 1}$. We call the points $\eta^{\pm 1}$ as being adhesif (see \cite{Mandl}).  

We introduce $\left( R(t)\right)_{t\in [0,2T]}$ a Bessel process of dimension $2\gamma+1$ such that $R(0)=1$ and 
\begin{equation*}\label{eq:taubessel}
\tau:=\inf\left \lbrace t\in [0,2T]; \, R(t) \not\in (\eta^{-1},\eta) \right \rbrace
\end{equation*}
with the convention $\inf \{\emptyset\}=2T$. Then $\left( R(t\wedge \tau)\right)_{t\in [0,2T]}$ is generated by $\left( \mathcal{L}, \mathcal{D}_*\right)$ (see \cite{Mandl}).

As martingale problems associated to Bessel processes are well-posed, this theorem implies that all the $\left(X^\varphi(t\wedge \tau^\varphi)\right)_{t\in [0,2T]}$ have the same distribution which is the one of $\left( R(t\wedge \tau)\right)_{t\in [0,2T]}$. In particular $\tau$ and the $\tau^\varphi$ have the same distribution.

\begin{proof}[Proof of Theorem~\ref{thm:weakly}]

The process 
$$M^{\varphi(\eps)}_f(t_n)= f\left( X^{\varphi(\eps)}(\lfloor \frac{t_n\wedge \tau^\varphi}{\varphi(\eps)^2}\rfloor\varphi(\eps)^2)\right)-\sum_{j=1}^{\lfloor \frac{t_n\wedge \tau^\varphi}{\varphi(\eps)^2}\rfloor}\mathcal{A}^{\varphi(\eps)}f\left( X^{\varphi(\eps)}(t_j)\right)$$
 is a $\mathcal{F}_n^{X^{\varphi(\eps)}}$-martingale for all $f\in \mathcal{D}_*^{\varphi(\eps)}$. Nevertheless, for all $f\in \mathcal{D}_*$ it is only a submartingale. By Doob decomposition (see \cite{EthierKurtz}) we can write
\begin{equation}\label{eq:DoobforM}
M^{\varphi(\eps)}_f(t_n)=Mart^{\varphi(\eps)}_f(t_n)+O^{\varphi(\eps)}_f(t_n)
\end{equation}
where $\left(Mart^{\varphi(\eps)}_f(t_n)\right)_n$ is a $\mathcal{F}_n^{X^{\varphi(\eps)}}$-martingale and $\left( O^{\varphi(\eps)}_f(t_n)\right)_n$ is deterministic and tend to $0$ as $\eps \to 0$.

Then, applying the Representation Theorem of Skorohod (see~\cite{Billingsley}), there exist a probability space $\left( \tilde{\Omega},\tilde{\mathcal{F}},\tilde{\P}\right)$ and processes
$$\left( t\to \tilde{X}^{\varphi(\eps)}(t), t\in [0,2T]\right)_{0<\eps<\eps_*}, \quad \text{and } \left( \tilde{X}^{\varphi}(t)\right)_{t\in [0,2T]} $$
 respectively of same distribution than $\left( t\to {X}^{\varphi(\eps)}(t), t\in [0,2T]\right)_{0<\eps<\eps_*}$ and $\left( {X}^{\varphi}(t)\right)_{t\in [0,2T]}$, there exists also $\tilde{\tau}^{\varphi}$ stopping time for $\tilde{X}^{\varphi}$ with the same distribution than $\tau^\varphi$ and such that
 \begin{equation*}\label{eq:cvpstilde}
 \sup_{t\in [0,2T]}\left\vert \tilde{X}^\varphi(t\wedge \tilde{\tau}^\varphi)-\tilde{X}^{\varphi(\eps)}(t\wedge \tilde{\tau}^\varphi)\right\vert \xrightarrow[\eps\to 0]{}0, \qquad \tilde{\P}\text{-a.s.}
 \end{equation*}
  \begin{lemma}\label{lem:tilde}
\begin{itemize}
\item[i)]Let $0\leq t_1<t_2\leq 2T$, then for all $f\in \mathcal{D}_*$ we have
\begin{align*}
\lim_{\eps\to 0} &\,  f\left(\tilde{X}^{\varphi(\eps)}\left(\lfloor \frac{t_2\wedge \tilde{\tau}^\varphi}{\varphi(\eps)^2} \rfloor\varphi(\eps)^2 \right)\right) -f\left(\tilde{X}^{\varphi(\eps)}\left(\lfloor \frac{t_1\wedge \tilde{\tau}^\varphi}{\varphi(\eps)^2} \rfloor\varphi(\eps)^2 \right) \right)-\sum_{j=\lfloor \frac{t_1\wedge \tilde{\tau}^\varphi}{\varphi(\eps)^2} \rfloor}^{\lfloor \frac{t_2\wedge \tilde{\tau}^\varphi}{\varphi(\eps)^2} \rfloor-1}\mathcal{A}^{\varphi(\eps)}f\left(\tilde{X}^{\varphi(\eps)}(t_j)\right)\\
&= f\left(\tilde{X}^\varphi(t_2\wedge \tilde{\tau}^\varphi)\right)-f\left(\tilde{X}^{\varphi}(t_1\wedge \tilde{\tau}^\varphi)\right)-\int_{t_1\wedge \tilde{\tau}^\varphi}^{t_2\wedge \tilde{\tau}^\varphi}\mathcal{L}f\left( \tilde{X}^\varphi(s)\right)ds, \qquad \tilde{\P}\text{-a.s.}
\end{align*}
\item[ii)]The limit process $\left(\tilde{M}^\varphi_f(t)\right)_t$,
$$\tilde{M}^\varphi_f(t)=f\left( \tilde{X}^\varphi(t\wedge \tilde{\tau}^\varphi)\right)-\int_{t_1\wedge \tilde{\tau}^\varphi}^{t_2\wedge \tilde{\tau}^\varphi} \mathcal{L}f \left(\tilde{X}^\varphi(s) \right)ds$$
is a $\mathcal{F}_t^{\tilde{X}^\varphi}$-martingale.
\end{itemize} 
 \end{lemma}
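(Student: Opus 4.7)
The plan is to establish (i) by term-by-term almost-sure convergence on the Skorokhod probability space $(\tilde\Omega,\tilde{\mathcal F},\tilde{\P})$, and then to deduce (ii) by transporting the discrete-time (sub)martingale identity for $M^{\varphi(\eps)}_f$ from \eqref{eq:DoobforM} to the limit, using dominated convergence.

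For part (i), the Skorokhod construction already supplies $\sup_{t\in[0,2T]}|\tilde X^{\varphi(\eps)}(t\wedge\tilde\tau^\varphi)-\tilde X^\varphi(t\wedge\tilde\tau^\varphi)|\to 0$ almost surely. Combined with the continuity of $f\in\mathcal D_*$ on the compact $[\eta^{-1},\eta]$, the elementary estimate $|t_i\wedge\tilde\tau^\varphi-\lfloor(t_i\wedge\tilde\tau^\varphi)/\varphi(\eps)^2\rfloor\varphi(\eps)^2|\le\varphi(\eps)^2$, and the continuity of the limiting path $\tilde X^\varphi$, this disposes of the two boundary terms $f(\tilde X^{\varphi(\eps)}(\cdots))\to f(\tilde X^\varphi(t_i\wedge\tilde\tau^\varphi))$. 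The substantive step is the convergence of the discrete sum to the time integral. A direct Taylor expansion of $F(\xi,\omega)$ in \eqref{eq:Fasym} combined with \eqref{eq:aeps}--\eqref{eq:ceps}, the hypotheses $\E(\omega_k^2)=1$, $\E(G_0^\eps(x,\cdot))=0$, the bounds \eqref{eq:g0g1} with $\alpha>0$ and $\beta>1$, and the uniform boundedness of $f',f'',f'''$ on the compact $[\eta^{-1},\eta]$ (since $f\in\mathcal D_*$) yields
$$
\mathcal A^\eps f(x) \;=\; \eps^2\,\mathcal L f(x) \;+\; \rho^\eps(x), \qquad \sup_{x\in[\eta^{-1},\eta]}|\rho^\eps(x)|\;=\;o(\eps^2),
$$
up to the normalising constant fixed by the convention in \eqref{eq:aeps}. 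Since the discrete sum contains $O(\varphi(\eps)^{-2})$ terms, the error piece $\sum_j\rho^{\varphi(\eps)}(\tilde X^{\varphi(\eps)}(t_j))$ is $o(1)$ almost surely, while the principal contribution $\varphi(\eps)^2\sum_j\mathcal L f(\tilde X^{\varphi(\eps)}(t_j))$ is a Riemann sum for a function uniformly continuous on $[\eta^{-1},\eta]$ evaluated along paths that converge uniformly to $\tilde X^\varphi$, and so converges $\tilde{\P}$-almost surely to $\int_{t_1\wedge\tilde\tau^\varphi}^{t_2\wedge\tilde\tau^\varphi}\mathcal L f(\tilde X^\varphi(s))\,\rd s$.

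For part (ii), fix $0\le s\le t\le 2T$ and a bounded continuous functional $\Phi$ of the path restricted to $[0,s]$. Transporting the decomposition \eqref{eq:DoobforM} to the Skorokhod space and taking expectations gives
$$
\tilde\E\bigl(\Phi\,[\tilde M^{\varphi(\eps)}_f(t)-\tilde M^{\varphi(\eps)}_f(s)]\bigr) \;=\; O^{\varphi(\eps)}_f(t)-O^{\varphi(\eps)}_f(s)\;\longrightarrow\;0
$$
as $\eps\to 0$, since the deterministic correction $O^{\varphi(\eps)}_f$ vanishes in the limit. By part (i) the integrand converges $\tilde{\P}$-almost surely to $\Phi\,[\tilde M^\varphi_f(t)-\tilde M^\varphi_f(s)]$, and the uniform boundedness of $f$ and $\mathcal L f$ on the compact $[\eta^{-1},\eta]$, together with the boundedness of $\Phi$, supplies a dominating integrable function. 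Dominated convergence then gives $\tilde\E(\Phi\,[\tilde M^\varphi_f(t)-\tilde M^\varphi_f(s)])=0$. Since bounded continuous functionals of the path on $[0,s]$ generate $\mathcal F^{\tilde X^\varphi}_s$ (extending by the monotone class theorem), this is the claimed martingale property.

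The main obstacle is the uniform $o(\eps^2)$ bound on $\rho^\eps$ over the compact $[\eta^{-1},\eta]$: it is precisely here that every part of Hypothesis~\ref{hyp:one} comes into play. The zero-mean condition $\E(G_0^\eps(x,\cdot))=0$ kills a drift contribution of size $\eps^{\alpha+1}$ that would otherwise swamp the $\eps^2$ drift $\eps^2\gamma/x$; the condition $\beta>1$ renders the $G_1$ drift truly negligible compared to $\eps^2$; and the localisation enforced by the stopping time $\tau^\eps$ is what delivers uniformity in $x$ of all the higher Taylor terms packaged in $c^\eps(f,\cdot)$. Without this stopping, uniform control would be lost, which is exactly why the proof of Theorem~\ref{thm:scaling} is broken into the three steps outlined at the start of Appendix~\ref{s:appendix}.
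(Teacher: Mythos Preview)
Your proof is correct and follows essentially the same route as the paper. The only cosmetic difference is that the paper splits the sum-to-integral convergence into three explicit pieces (Riemann approximation for $\mathcal Lf(\tilde X^\varphi)$, then $\mathcal Lf(\tilde X^\varphi)\leftrightarrow\mathcal Lf(\tilde X^{\varphi(\eps)})$ via almost-sure path convergence, then $\varphi(\eps)^2\mathcal Lf\leftrightarrow\mathcal A^{\varphi(\eps)}f$ via uniform convergence of $\varphi(\eps)^{-2}\mathcal A^{\varphi(\eps)}f$ to $\mathcal Lf$ on $[\eta^{-1},\eta]$), whereas you package the last step as a single uniform estimate $\mathcal A^\eps f=\eps^2\mathcal Lf+o(\eps^2)$ and then treat the remaining Riemann sum in one stroke; for part~(ii) both arguments use \eqref{eq:DoobforM} together with part~(i) and dominated convergence.
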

\begin{proof}
\begin{itemize}
\item[i)]$\tilde{X}^\varphi$ is time-continuous as limit of $\tilde{X}^{\varphi(\eps)}$ which are time-continuous. Then, for all $f\in \mathcal{D}_*$
\begin{align*}
\left \vert f\left( \tilde{X}^\varphi(t\wedge \tilde{\tau}^\varphi)\right)-f\left( \tilde{X}^{\varphi(\eps)}\left( \lfloor \dfrac{t\wedge \tilde{\tau}^\varphi}{\varphi(\eps)^2}\rfloor\right)\right)\right\vert \leq & ||f'||_{\infty}\Big( \left \vert \tilde{X}^\varphi\left(t\wedge \tilde{\tau}\right)- \tilde{X}^\varphi\left(\lfloor \dfrac{t\wedge \tilde{\tau}^\varphi}{\varphi(\eps)^2}\rfloor\right)\right \vert \\
& + \left \vert \tilde{X}^\varphi\left( \lfloor \dfrac{t\wedge \tilde{\tau}^\varphi}{\varphi(\eps)^2}\rfloor\right)-\tilde{X}^{\varphi(\eps)}\left( \lfloor \dfrac{t\wedge \tilde{\tau}^\varphi}{\varphi(\eps)^2}\rfloor\right)\right \vert \Big)\\
&\xrightarrow[\eps \to 0]{} 0 \qquad \tilde{\P}-\text{a.s}.
\end{align*}
We have, now to control $\int_{t_1\wedge \tilde{\tau}^{\varphi}}^{t_2\wedge \tilde{\tau}^\varphi}\mathcal{L}f\left( \tilde{X}^\varphi(s)\right)\rd s -\sum_{j=\lfloor \frac{t_1\wedge \tilde{\tau}^\varphi}{\varphi(\eps)^2}\rfloor}^{\lfloor \frac{t_2\wedge \tilde{\tau}^\varphi}{\varphi(\eps)^2}\rfloor-1}\mathcal{A}^{\varphi(\eps)}f\left( \tilde{X}^{\varphi(\eps)}(t_j)\right)$ for all $f\in \mathcal{D}_*$.
\begin{align*}
\Big  \vert \int_{t_1\wedge \tilde{\tau}^{\varphi}}^{t_2\wedge \tilde{\tau}^\varphi}\mathcal{L}f\left( \tilde{X}^\varphi(s)\right)\rd s &-\sum_{j=\lfloor \frac{t_1\wedge \tilde{\tau}^\varphi}{\varphi(\eps)^2}\rfloor}^{\lfloor \frac{t_2\wedge \tilde{\tau}^\varphi}{\varphi(\eps)^2}\rfloor-1}\mathcal{A}^{\varphi(\eps)}f\left( \tilde{X}^{\varphi(\eps)}(t_j)\right)\Big \vert \\
\leq & \left \vert \int_{t_1\wedge \tilde{\tau}^{\varphi}}^{t_2\wedge \tilde{\tau}^\varphi}\mathcal{L}f\left( \tilde{X}^\varphi(s)\right)\rd s -\varphi(\eps)^2\sum_{j=\lfloor \frac{t_1\wedge \tilde{\tau}^\varphi}{\varphi(\eps)^2}\rfloor}^{\lfloor \frac{t_2\wedge \tilde{\tau}^\varphi}{\varphi(\eps)^2}\rfloor-1}\mathcal{L}f\left( \tilde{X}^{\varphi}(t_j)\right)\right \vert \\
+\varphi(\eps)^2 & \left \vert \sum_{j=\lfloor \frac{t_1\wedge \tilde{\tau}^\varphi}{\varphi(\eps)^2}\rfloor}^{\lfloor \frac{t_2\wedge \tilde{\tau}^\varphi}{\varphi(\eps)^2}\rfloor-1}\mathcal{L}f\left( \tilde{X}^{\varphi}(t_j)\right)-\sum_{j=\lfloor \frac{t_1\wedge \tilde{\tau}^\varphi}{\varphi(\eps)^2}\rfloor}^{\lfloor \frac{t_2\wedge \tilde{\tau}^\varphi}{\varphi(\eps)^2}\rfloor-1}\mathcal{L}f\left( \tilde{X}^{\varphi(\eps)}(t_j)\right) \right \vert \\
+ \varphi(\eps)^2 & \left \vert \sum_{j=\lfloor \frac{t_1\wedge \tilde{\tau}^\varphi}{\varphi(\eps)^2}\rfloor}^{\lfloor \frac{t_2\wedge \tilde{\tau}^\varphi}{\varphi(\eps)^2}\rfloor-1}\mathcal{L}f\left( \tilde{X}^{\varphi(\eps)}(t_j)\right)- \frac{1}{\varphi(\eps)^2}\sum_{j=\lfloor \frac{t_1\wedge \tilde{\tau}^\varphi}{\varphi(\eps)^2}\rfloor}^{\lfloor \frac{t_2\wedge \tilde{\tau}^\varphi}{\varphi(\eps)^2}\rfloor-1}\mathcal{A}^{\varphi(\eps)}f\left( \tilde{X}^{\varphi(\eps)}(t_j)\right) \right \vert.
\end{align*}
The first term tends to $0$ when $\eps \to 0$ as an approximation of the integral. The second term tends to $0$ as $\eps \to 0$ too because of the convergence almost sure of $\tilde{X}^{\varphi(\eps)}$ to $\tilde{X}^\varphi$. For the third term, we use \cite{Stroock} and show that for each $f\in \mathcal{D}_*$,  
$$\frac{1}{\varphi(\eps)^2}\mathcal{A}^{\varphi(\eps)}f\xrightarrow[\eps\to 0]{}\mathcal{L}f$$
 uniformly on the compact subsets of $[\eta^{-1},\eta]$.

\item[ii)]We have to show that for all $0\leq s<t \leq 2T$ and all $0\leq s_1<\cdots<s_d\leq s$, we have, for all application $\phi\in \mathcal{C}^\infty_c\left( \R^d\right)$,
$$\E\left( \left(\tilde{M}^\varphi_f(t\wedge \tilde{\tau}^\varphi)-\tilde{M}^\varphi_f(s\wedge \tilde{\tau}^{\varphi})\right) \phi\left(\tilde{X}^\varphi(s_1\wedge \tilde{\tau}^\varphi),\cdots, \tilde{X}^\varphi(s_d\wedge \tilde{\tau}^\varphi)\right)\right)=0.$$
For that we use point \textit{ii)} of Lemma~\ref{lem:tilde}, \eqref{eq:DoobforM} and conclude with the Dominated convergence Theorem.
\end{itemize}
\end{proof}
Then Theorem~\ref{thm:weakly} is obtain by returning to $\left( \Omega, \mathcal{F}, \P\right)$.

By Theorem~\ref{thm:weakly} the two stopping time $\tau:= \inf \{t\in [0,T]; \, R(t)\not \in (\eta^{-1},\eta)\}$ and $\tau^\varphi$ have the same distribution, we'll write $\tau$ for both.

We end this step with the following corollary.
\begin{corollary}
The family of processes $\left( t\to X^\eps(t\wedge \tau), \, t\in [0,2T]\right)_{0<\eps<\eps_*}$ converge weakly to $\left( R(t\wedge \tau)\right)_{t\in [0,2T]}$.
\end{corollary}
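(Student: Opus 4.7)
The plan is to combine the precompactness established in Step~1 with the martingale characterization of Step~2 and a standard subsequence argument, exploiting the well-posedness of the martingale problem for a Bessel process stopped at the boundary of $(\eta^{-1},\eta)$.

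First I would observe that, by definition of $X^\eps$, we have $X^\eps(t\wedge\tau^\eps)=X^\eps(t)$ for all $t\in[0,2T]$, so the family $(X^\eps(\cdot\wedge\tau^\eps))_{0<\eps<\eps_*}$ coincides with the family $(X^\eps)_{0<\eps<\eps_*}$ already shown to be precompact in $\mathcal{C}=(\mathcal{C}([0,2T],\R_+),\|\cdot\|_\infty)$ via \eqref{eq:controlstep} and Theorem~1.4.11 of~\cite{Stroock}. In particular, every sequence $\eps_n\to 0$ admits a subsequence $\eps_{n_k}\to 0$ such that $X^{\eps_{n_k}}$ converges in distribution in $\mathcal{C}$ to some limiting process $Y$.

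Next, I would apply Theorem~\ref{thm:weakly} to that convergent subsequence (in the role of $\varphi(\eps)$): the limit $Y$ is a solution of the martingale problem associated with the stopped generator $(\mathcal{L},\mathcal{D}_*)$, started at $Y(0)=1$. Since the martingale problem for a Bessel process of dimension $2\gamma+1$ stopped at the adherent points $\eta^{\pm 1}$ is well-posed (see~\cite{Mandl}), the distribution of $Y$ is uniquely determined and coincides with the distribution of $R(\cdot\wedge\tau)$, where $R$ is the Bessel process of dimension $2\gamma+1$ with $R(0)=1$ and $\tau=\inf\{t\in[0,2T]\mid R(t)\notin(\eta^{-1},\eta)\}$.

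Finally, I would invoke the classical criterion: if a precompact family in a metric space admits the property that every convergent subsequence converges to the same limit, then the whole family converges to that limit. Concretely, suppose for contradiction that $X^\eps$ does not converge weakly to $R(\cdot\wedge\tau)$ as $\eps\to 0$. Then there exist a bounded continuous $\Phi:\mathcal{C}\to\R$, a constant $\delta>0$, and a sequence $\eps_n\to 0$ with $|\E(\Phi(X^{\eps_n}))-\E(\Phi(R(\cdot\wedge\tau)))|\geq\delta$ for all $n$. Precompactness yields a further subsequence along which $X^{\eps_{n_k}}$ converges in distribution to some $Y$, and by the previous paragraph $Y\stackrel{d}{=}R(\cdot\wedge\tau)$, so $\E(\Phi(X^{\eps_{n_k}}))\to\E(\Phi(R(\cdot\wedge\tau)))$, a contradiction. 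Hence the full family $(X^\eps)_{0<\eps<\eps_*}$ converges weakly to $R(\cdot\wedge\tau)$, which, rewritten with the notation $X^\eps(\cdot)=X^\eps(\cdot\wedge\tau^\eps)$ and the convention $\tau=\tau^\eps$ (cf.\ the remark just before the corollary), is exactly the claim.

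The argument is essentially bookkeeping: the technical work is entirely contained in the precompactness estimate~\eqref{eq:controlstep}, in Theorem~\ref{thm:weakly}, and in the well-posedness of the stopped Bessel martingale problem. No further estimate is required, and the main (very minor) point to keep track of is not to confuse the $\eps$-dependent stopping time $\tau^\eps$ with the stopping time $\tau$ of the limiting Bessel process, the identification of which in distribution is precisely what Theorem~\ref{thm:weakly} provides.
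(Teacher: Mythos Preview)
Your proof is correct and follows the same route as the paper's own argument, which is stated tersely as ``use the tightness of $(X^{\varphi(\eps)}(t\wedge\tau))$ in a reductio ad absurdum''; you have simply spelled out that reductio in full (precompactness from Step~1, identification of any subsequential limit via Theorem~\ref{thm:weakly} and well-posedness, then the standard subsequence criterion). One small notational caveat: in the paper's convention, the $\tau$ appearing in the corollary is identified with $\tau^\varphi$ (the stopping time of the limit), not with $\tau^\eps$ as you write in your last sentence---but since $X^\eps(\cdot)=X^\eps(\cdot\wedge\tau^\eps)$ anyway, this does not affect your argument.
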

\begin{proof}
We have to check that for all $\phi:\mathcal{C}\left( [0,2T]:\R_+\right)\to \R$ continuous and bounded,
$$\lim_{\eps\to 0}\E\left(\phi\left( X^{\varphi(\eps)}(t\wedge \tau)\right) \right)=\E\left( R(t\wedge \tau)\right).$$
For that we use the tightness of $\left( t\to X^{\varphi(\eps)}(t\wedge \tau), t\in [0,2T]\right)_{0<\eps<\eps_*}$ in a reductio ad absurdum.
\end{proof}
 \end{proof}

\paragraph{Step 3: Suppression of the stopping times}
For the moment, we have the following weak convergence
$$\left( t\to R^\eps(t\wedge\tau^\eps \wedge \tau),\, t\in[0,2T]\right)_{0<\eps<\eps_*}\rightharpoonup \left( R(t\wedge \tau)\right)_{t\in [0,2T]}.$$

In this last step, we want to delete all the stopping times in order to expand the convergence to the whole family $\left( R^\eps\right)_\eps$. We will use the transience of the Bessel process for $d>2$ (here $d$ refers to the dimension of the Bessel process) and then deleting the stopping times remains to make $\eta \to +\infty$.

\begin{proposition}\label{prop:allcv}Let $\gamma>\frac{1}{2}$, then $\left( t\to R^\eps(t),\, t\in [0,T]\right)_{0<\eps<\eps_*}\rightharpoonup \left( R(t)\right)_{t\in [0,T]}$.
\end{proposition}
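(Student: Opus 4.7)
The approach is to exploit the transience of the limiting Bessel process to remove the two stopping times $\tau^\eps$ and $\tau$ appearing in the stopped convergence established in Step~2. Since $\gamma > 1/2$, the limit $R$ is a Bessel process of dimension $2\gamma+1 > 2$: it is transient, does not reach $0$ almost surely, and has continuous non-explosive trajectories on any compact interval. Consequently, as the trapping interval $(\eta^{-1},\eta)$ is enlarged, its exit time $\tau = \inf\{t\ge 0 : R(t) \notin (\eta^{-1},\eta)\}$ satisfies
\begin{equation*}
\lim_{\eta \to \infty} \P(\tau \le T) = 0.
\end{equation*}

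Fix a bounded continuous test function $\phi$ on $\mathcal{C}([0,T],\R_+)$ with $|\phi|\le M$ and $\delta>0$. I would first pick $\eta$ so large that $\P(\tau\le T) < \delta/(6M)$. For this fixed $\eta$, the corollary at the end of Step~2 gives $R^\eps(\cdot\wedge\tau^\eps\wedge\tau) \rightharpoonup R(\cdot\wedge\tau)$ as $\eps\to 0$. On the event $\{\tau^\eps > T,\,\tau > T\}$ each unstopped path agrees on $[0,T]$ with its stopped counterpart, so one obtains the decomposition
\begin{equation*}
\bigl|\E[\phi(R^\eps)]-\E[\phi(R)]\bigr| \le \bigl|\E[\phi(R^\eps(\cdot\wedge\tau^\eps\wedge\tau))]-\E[\phi(R(\cdot\wedge\tau))]\bigr| + 2M\bigl(\P(\tau^\eps\le T)+\P(\tau\le T)\bigr).
\end{equation*}
The first term vanishes as $\eps\to 0$ by the stopped-process convergence, and $\P(\tau\le T)$ is small by choice of $\eta$, so everything reduces to bounding $\P(\tau^\eps\le T)$.

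The main obstacle is precisely this last bound, i.e.\ transferring the weak convergence of the stopped paths into a bound on the exit time of the unstopped paths. I would use the inclusion $\{\tau^\eps\le T\} \subset \{\tau^\eps\wedge\tau\le T\}$ together with the observation that the latter event depends only on the stopped path $R^\eps(\cdot\wedge\tau^\eps\wedge\tau)$: it is precisely the event that this path touches the boundary $\{\eta^{-1},\eta\}$ by time $T$, which is a closed subset of $\mathcal{C}([0,2T],\R_+)$. By the Portmanteau theorem applied to the weak convergence of the stopped paths, together with Lemma~\ref{lem:exittimeBessel}~(i),
\begin{equation*}
\limsup_{\eps\to 0} \P(\tau^\eps\le T) \le \limsup_{\eps\to 0}\P(\tau^\eps\wedge\tau\le T) \le \P(\tau\le T) < \frac{\delta}{6M}.
\end{equation*}
Combining all the estimates, $\limsup_{\eps\to 0}\bigl|\E[\phi(R^\eps)]-\E[\phi(R)]\bigr| \le 2\delta/3 < \delta$. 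Since $\phi$ and $\delta$ were arbitrary, this establishes the desired weak convergence of $(R^\eps)_\eps$ to $R$ on $[0,T]$.
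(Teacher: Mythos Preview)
Your overall strategy coincides with the paper's: exploit transience of the limiting Bessel process (available for $\gamma>\tfrac12$) to make $\P(\tau\le T)$ small for large $\eta$, split $|\E\phi(R^\eps)-\E\phi(R)|$ into the stopped-process discrepancy plus error terms governed by $\P(\tau^\eps\le T)$ and $\P(\tau\le T)$, and then control $\P(\tau^\eps\le T)$. For this last step the paper extracts a subsequential weak limit $\tau^{\varphi(0)}$ of the exit times, argues $\P(\tau\le\tau^{\varphi(0)})=1$, and concludes by contradiction; your Portmanteau argument on a closed set of paths is a more direct device and is a genuine improvement in clarity.

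There is, however, a real slip in the execution. You assert that $\{\tau^\eps\wedge\tau\le T\}$ is precisely the event that the doubly-stopped path $R^\eps(\cdot\wedge\tau^\eps\wedge\tau)$ reaches $\{\eta^{-1},\eta\}$ by time $T$. This fails on $\{\tau<\tau^\eps\}$: there the path is frozen at time $\tau$ at the value $R^\eps(\tau)$, which lies strictly inside $(\eta^{-1},\eta)$ since $\tau<\tau^\eps$, so it never touches the boundary even though $\tau^\eps\wedge\tau=\tau\le T$. Thus the inclusion you need actually runs the wrong way, and the chain $\{\tau^\eps\le T\}\subset\{\text{stopped path hits boundary by }T\}$ breaks. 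The remedy is simple: apply the Portmanteau step to the \emph{once}-stopped process $X^\eps=R^\eps(\cdot\wedge\tau^\eps)$, for which $\{\tau^\eps\le T\}$ genuinely equals the closed event that $X^\eps$ hits $\{\eta^{-1},\eta\}$ by time $T$. The needed convergence $X^\eps\rightharpoonup R(\cdot\wedge\tau)$ is what Steps~1--2 deliver (tightness of $X^\eps$ plus identification of every subsequential limit via the well-posed martingale problem); the additional $\wedge\tau$ in the paper's own formulation of the corollary is inessential here and is what led you astray.
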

\begin{proof}
For $d>2$ (which means $\gamma>\frac{1}{2}$), the transience of the Bessel process $\left( R(t)\right)_{t\in [0,2T]}$ yields that
$$\P\left( \forall t\in [0,2T];\, 0<R(t)<+\infty\right)=1,$$
and then, $\lim_{\eta\to +\infty}\P\left( \tau=2T\right)=1$. Let a decreasing subsequence $\varphi:(0,\eps_*)\to (0,\eps_*)$ such that $\left(\tau^{\varphi(\eps)}\right)_{0<\eps<\eps_*}$ converge weakly to $\tau^{\varphi(0)}$ (which is not a stopping time), it's easy to show that $\P\left(\tau\leq \tau^{\varphi(0)}\right)=1$ and then 
$$\lim_{\eta\to +\infty}\P\left( \tau^{\varphi(0)}\leq 2T\right)=1.$$

As $\tau^{\varphi(0)}=\lim_{\eps\to 0}\tau^{\varphi(\eps)}$, we can deduce that for all $p>0$, there exist $\eta_c\gg 1$ and $\eps(\eta_c)>0$ such that for all $\eta>\eta_c$ and $\eps<\eps(\eta_c)$, we have $\P\left( \tau^{\varphi(\eps)}\geq T\right)>1-p$. From that, we can deduce that for all $\phi:\mathcal{C}\left( [0,T]:\R_+\right) \to \R$
$$ \left \vert \E \left( \phi\left( R^{\varphi(\eps)}(\cdot)\right)-\phi\left( R(\cdot)\right)\right) \right\vert\leq 2||\phi||_\infty p+ \left \vert \E\left( \phi\left(R^{\varphi(\eps)}(\cdot\wedge \tau^{\varphi(\eps)}\wedge \tau)\right)-\phi\left(R(\cdot\wedge \tau) \right)\right)\right\vert.$$
But by Theorem~\ref{thm:weakly}
$$\E\left( \phi\left(R^{\varphi(\eps)}(\cdot\wedge \tau^{\varphi(\eps)}\wedge \tau)\right)-\phi\left(R(\cdot\wedge \tau) \right)\right)\mapsto_{\eps\to 0}0.$$
Then, there exists $\eta_c\gg 1$ such that for all $\eta> \eta_c$, for all $\phi:\mathcal{C}\left( [0,T]:\R_+\right) \to \R$, 
\begin{equation}\label{eq:limpto0}
\lim_{p\to 0}\lim_{\eps\to 0}  \E \left( \phi\left( R^{\varphi(\eps)}(\cdot)\right)-\phi\left( R(\cdot)\right)\right)=0.
\end{equation}

Now, assume that $R^\eps$ doesn't converge weakly to $R$, then there exist $\phi:\mathcal{C}\left( [0,T]:\R_+\right) \to \R$ and a decreasing subsequence $\psi:(0,\eps_*)\to (0,\eps_*)$ and there exists $\delta>0$ such that 
$$ \left \vert \E\left( \phi\left( R^{\psi(\eps)}(\cdot)\right) -\phi\left( R(\cdot)\right)\right) \right\vert>\delta.$$
As $\tau^{\psi(\eps)}$ is tight, it admits converging subsequences $\tau^{\varphi(\psi(\eps))}$ and \eqref{eq:limpto0} implies that 
$$\E\left( \phi\left( R^{\phi(\psi(\eps))}(\cdot)\right) -\phi\left( R(\cdot)\right)\right)\to_{\eps\to 0} 0,$$
which is a contradiction and then ends the proof of both Proposition~\ref{prop:allcv} and Theorem~\ref{thm:scaling}.
\end{proof}

\end{proof}

We remark that the condition $\gamma>\frac{1}{2}$ just appears in the last step, in order that the Bessel process $\left( R(t)\right)_{t\in [0,T]}$ does not reaches $0$ or explodes in finite time. If $\gamma\leq \frac{1}{2}$ then the Bessel process is reccurent and we are not able to delete the stopping times with this method.

In the proof of Theorem~\ref{thm:finalresult}, we need some estimation of exit time for a transient Bessel process. In particular, we need to know the probability for a Bessel procces starting at $1$ to reach $2$ before $\frac{1}{2}$.

\textbf{Lemma $\mathbf{4.2}$\, }\textit{ 
Let $\gamma>-\frac12$, and let $R$ be a Bessel process of dimension $2\gamma+1$ with $R(0)=1$. Let, for $a_-<1<a_+$, 
$$
T_{a_-,a_+}=\inf\{t\geq 0 \mid R(t)\not \in ]a_-, a_+[\},\quad T_{a_-}=\inf\{t\geq 0 \mid R(t)<a_-\}, \quad T_{a_+}=\inf\{t\geq 0 \mid R(t)> a_+\}.
$$
(i) Then, for all $T\geq 0$,
\begin{equation}\label{eq:ABesselexit}
0<\P(T_{a_-, a_+}>T)<1.
\end{equation}
(ii) If in addition $\gamma>\frac12$, 
\begin{equation*}\label{eq:Besseltransient}
\P(T_{a_-}>T_{a_+})=\dfrac{a_-^{1-2\gamma}-1}{a_-^{1-2\gamma}-a_+^{1-2\gamma}}
\end{equation*}
}
\begin{proof} (i) See~\cite{Mandl},~\cite{RevuzYor} or also~\cite{EthierKurtz}.\\
(ii) This is readily shown using  the Optional Stopping Theorem, as follows.  Consider the process $M(t)= R(t)^{1-2\gamma}$. Note that $1-2\gamma\leq 0$ for $\gamma\geq \frac{1}{2}$ but since for these values of $\gamma$ the Bessel process is almost surely positive (see~\cite{RevuzYor}), $M(t)$ is well defined.  Considering
\begin{displaymath}
T_+^M=\inf\left\{t\geq 0 | M(t)=a_+^{1-2\gamma}\right\} \quad \mbox{and}\quad T_-^M=\inf\left\{t\geq 0| M(t)=a_-^{1-2\gamma}\right\}.
\end{displaymath}
it is clear that $\P\left(T_{a_+}<T_{a_-}\right)=\P\left(T_+^M<T_-^M\right)$. By the Ito lemma, it is easily checked this is a local martingale. It then follows from the Optional Stopping Theorem that
\begin{equation}\label{eq:optionalstopping}
\E\left(M\left(T_+^M\wedge T_-^M\right)\right)=1.
\end{equation}
On the other hand,
\begin{equation}\label{eq:espM}
\E\left(M\left(T_+^M\wedge T_-^M\right)\right)=a_+^{1-2\gamma}\P\left(T_+^M<T_-^M\right)+a_-^{1-2\gamma}\left(1-\P\left(T_+^M<T_-^M\right)\right).
\end{equation}
From \eqref{eq:optionalstopping} and \eqref{eq:espM}, we then obtain
$$
\P\left(T_+^M<T_-^M\right)=\dfrac{a_-^{1-2\gamma}-1}{a_-^{1-2\gamma}-a_+^{1-2\gamma}},
$$
which yields the desired result.
\end{proof}
\bibliographystyle{alpha}
\bibliography{biblio}

\end{document}